\documentclass[12pt, a4paper, reqno]{amsart}
\usepackage{amsmath}
\usepackage{amsfonts}
\usepackage{amssymb}
\usepackage{color}
\usepackage{comment,graphicx}
\usepackage[T1]{fontenc}
\usepackage{url}
\usepackage{ae}

\def\phi{\varphi}
\def\rho{\varrho}
\def\epsilon{\varepsilon}
\numberwithin{equation}{section}
\theoremstyle{plain}
\newtheorem{theorem}[equation]{Theorem}
\newtheorem{lemma}[equation]{Lemma}

\theoremstyle{definition}
\newtheorem{definition}[equation]{Definition}

\theoremstyle{remark}
\newtheorem{remark}[equation]{Remark}

\renewcommand{\le}{\leqslant}
\renewcommand{\ge}{\geqslant}
\renewcommand{\leq}{\leqslant}
\renewcommand{\geq}{\geqslant}

\begin{document}
	\title[Herz-type Sobolev spaces on domains]{Multiplication properties and  the  Rellich-Kondrachov  theorem in Herz-type Sobolev spaces}
	\author{Douadi Drihem}
	\maketitle
	\date{\today }
	
	\begin{abstract}
		In this paper, we prove that Herz-type Sobolev spaces form a Banach algebra and establish a Rellich--Kondrachov compactness theorem for these spaces. These results extend the corresponding classical theory and further demonstrate that Herz-type Sobolev spaces provide a natural generalization of the classical Sobolev spaces.

		\textit{MSC 2010\/}: 46E35.
		
		\textit{Key Words and Phrases}: Herz spaces, Sobolev spaces, Embeddings, Multiplications, Rellich--Kondrachov theorem.
	\end{abstract}
\section{Introduction}

Sobolev spaces are among the most fundamental function spaces in modern analysis, with numerous applications in harmonic analysis and the theory of partial differential equations. We refer the reader to the monographs \cite{AdamsFournier03}, \cite{Burenkov}, \cite{Ma85}, and \cite{Tartar} for comprehensive treatments, historical background, and further references on Sobolev spaces.

Herz spaces, introduced by Herz in \cite{Herz68}, also occupy an important place in harmonic analysis, particularly in the study of singular integral operators, Fourier multipliers, and related topics. Their significance arises not only from their rich mathematical structure but also from their wide range of applications in analysis. We refer the reader to \cite{BS85}, \cite{Dr-Herz-Heat}, \cite{Fi-We08}, \cite{Rag09}, and \cite{Tu11} for some of these applications. More recent developments and further advances in the theory of Herz spaces can be found in \cite{RS}, \cite{ZYZ}, and the monograph \cite{LYH}.

Motivated by the theories of Sobolev and Herz spaces, the author in \cite{Dr-Sobolev,Dr-Herz-Sobolev2} introduced the class of Herz-type Sobolev spaces on domains and established several of their fundamental properties, including Sobolev-type embedding theorems. Closely related function spaces on $\mathbb{R}^{n}$ were introduced earlier by Lu and Yang in \cite{LuYang97}, who also demonstrated their applicability to the study of partial differential equations.

In this paper, we  show that Herz-type Sobolev spaces form a Banach algebra and establish a Rellich--Kondrachov compactness theorem for these spaces.

The paper is organized as follows. In Section 2, we present the necessary preliminaries, fix the notation, and recall some basic facts concerning Herz spaces, including interpolation inequalities and density results.

In Section 3, we recall the fundamental properties of Herz-type Sobolev spaces, highlighting their analogy with the classical Sobolev spaces. We also present Sobolev embedding theorems for these spaces. Finally, in Section 4, we establish pointwise multiplication properties, proving that Herz-type Sobolev spaces form a Banach algebra under suitable assumptions, and we conclude with a Rellich--Kondrachov compactness theorem for these spaces.

\subsection{Notation and conventions}

As usual, $\mathbb{R}^{n}$ denotes the $n$-dimensional Euclidean space, $%
\mathbb{N}$ the set of all natural numbers, and $\mathbb{N}_{0}=\mathbb{N}%
\cup \{0\}$. The symbol $\mathbb{Z}$ stands for the set of all integers. 
For $x\in \mathbb{R}^{n}$ and $r>0$, we denote by $B(x,r)$ the open ball in $%
\mathbb{R}^{n}$ centered at $x$ with radius $r$. 
If $1\leq p<\infty$ and 
$\frac{1}{p}+\frac{1}{p^{\prime }}=1,$
then $p^{\prime }$ is called the conjugate exponent of $p$.

\medskip

We denote by $|\Omega|$ the $n$-dimensional Lebesgue measure of a measurable
set $\Omega \subseteq \mathbb{R}^{n}$. For any measurable set $\Omega
\subseteq \mathbb{R}^{n}$, the Lebesgue space $L^{p}(\Omega)$, $0<p\leq
\infty$, consists of all measurable functions $f$ such that 
\begin{equation*}
\|f\|_{L^{p}(\Omega)} =\left( \int_{\Omega}|f(x)|^{p}\,dx
\right)^{1/p}<\infty, \qquad 0<p<\infty,
\end{equation*}
and 
\begin{equation*}
\big\Vert f\big\Vert_{L^{\infty }(\Omega )}=\underset{x\in \Omega }{\text{%
		ess-sup}}\left\vert f(x)\right\vert <\infty .
\end{equation*}
If $\Omega=\mathbb{R}^{n}$, we simply write 
\begin{equation*}
\|f\|_{L^{p}(\mathbb{R}^{n})}=\|f\|_{p}. 
\end{equation*}

\medskip

If $G\subset \mathbb{R}^{n}$ is nonempty, we denote by $\overline{G}$ the
closure of $G$ in $\mathbb{R}^{n}$. Let $\Omega \subseteq \mathbb{R}^{n}$ be
open. We write $G\Subset \Omega$ if $\overline{G}\subset \Omega$ and $G$ is
compact in $\mathbb{R}^{n}$. For any nonnegative integer $m$, let $%
C^{m}(\Omega)$ denote the vector space of all functions $f$ such that $f$
and all their partial derivatives $D^{\beta}f$ of order $|\beta|\leq m$ are
continuous on $\Omega$. We set 
\begin{equation*}
C^{0}(\Omega)=C(\Omega), \qquad C^{\infty}(\Omega)=\bigcap_{m\geq 0}
C^{m}(\Omega).
\end{equation*}
Finally, we denote by $C_{c}(\Omega)$ the space of all continuous functions
on $\Omega$ with compact support in $\Omega$.

\section{Herz spaces}

In this section, we present some fundamental properties of Herz spaces. Let $%
k\in \mathbb{Z}$. For convenience, we set 
\begin{equation*}
B_{k}=B(0,2^{k}), \qquad R_{k}=B_{k}\setminus B_{k-1}.
\end{equation*}
We denote by $\chi_{k}$ the characteristic function of the set $R_{k}$.

\begin{definition}
	\label{def-inh-Herz} Let $\alpha \in \mathbb{R}$ and $1\leq p,q\leq \infty$.
	
	\medskip
	
	\noindent \textnormal{(i)} The homogeneous Herz space $\dot{K}%
	_{p}^{\alpha,q}(\mathbb{R}^{n})$ is defined as the set of all functions $%
	f\in L_{\mathrm{loc}}^{p}(\mathbb{R}^{n}\setminus\{0\})$ such that 
	\begin{equation*}
	\|f\|_{\dot{K}_{p}^{\alpha,q}(\mathbb{R}^{n})} = \left(
	\sum_{k=-\infty}^{\infty} 2^{k\alpha q} \|f\chi_{k}\|_{p}^{q} \right)^{1/q}
	<\infty,
	\end{equation*}
	with the usual modifications when $p=\infty$ and/or $q=\infty$.
	
	\medskip
	
	\noindent \textnormal{(ii)} The nonhomogeneous Herz space $K_{p}^{\alpha,q}(%
	\mathbb{R}^{n})$ is defined as the set of all functions $f\in L_{\mathrm{loc}%
	}^{p}(\mathbb{R}^{n})$ such that 
	\begin{equation*}
	\|f\|_{K_{p}^{\alpha,q}(\mathbb{R}^{n})} = \|f\chi_{B_{0}}\|_{p} + \left(
	\sum_{k=1}^{\infty} 2^{k\alpha q} \|f\chi_{k}\|_{p}^{q} \right)^{1/q}
	<\infty,
	\end{equation*}
	with the usual modifications when $p=\infty$ and/or $q=\infty$.
\end{definition}

\begin{remark}
	The spaces $K_{p}^{\alpha,q}(\mathbb{R}^{n})$ and $\dot{K}_{p}^{\alpha,q}(%
	\mathbb{R}^{n})$ are Banach spaces. If $\alpha=0$ and $1\leq p=q\leq \infty$%
	, then both $\dot{K}_{p}^{0,p}(\mathbb{R}^{n})$ and $K_{p}^{0,p}(\mathbb{R}%
	^{n})$ coincide with the Lebesgue space $L^{p}(\mathbb{R}^{n})$. Moreover, 
	\begin{equation*}
	\dot{K}_{p}^{\alpha,p}(\mathbb{R}^{n}) = L^{p}(\mathbb{R}^{n},|\cdot|^{%
		\alpha p}),
	\end{equation*}
	that is, the weighted Lebesgue space endowed with the norm 
	\begin{equation*}
	\|f\|_{L^{p}(\mathbb{R}^{n},|\cdot|^{\alpha p})} = \left( \int_{\mathbb{R}%
		^{n}} |f(x)|^{p}|x|^{\alpha p}\,dx \right)^{1/p}.
	\end{equation*}
	
	Let $0<p,q\leq \infty$ and $\alpha>0$. Then 
	\begin{equation*}
	\dot{K}_{p}^{\alpha,q}(\mathbb{R}^{n})\cap L^{p}(\mathbb{R}^{n}) =
	K_{p}^{\alpha,q}(\mathbb{R}^{n}),
	\end{equation*}
	in the sense of equivalent quasi-norms. Furthermore, 
	\begin{equation*}
	\|f\|_{K_{p}^{\alpha,q}(\mathbb{R}^{n})} \approx \left( \sum_{k=0}^{\infty}
	2^{k\alpha q} \|f\chi_{\widehat{R}_{k}}\|_{p}^{q} \right)^{1/q},
	\end{equation*}
	where $\widehat{R}_{k}=R_{k}$ for $k\in \mathbb{N}$ and $\widehat{R}%
	_{0}=B_{0}$.
	
	Let $0<p\leq \infty$, $0<q_{1}\leq q_{2}\leq \infty$, and $\alpha\in\mathbb{R}$.
	Then the following continuous embeddings hold:
	\begin{equation}
	\dot{K}_{p}^{\alpha,q_{1}}(\mathbb{R}^{n})
	\hookrightarrow
	\dot{K}_{p}^{\alpha,q_{2}}(\mathbb{R}^{n}),
	\qquad
	K_{p}^{\alpha,q_{1}}(\mathbb{R}^{n})
	\hookrightarrow
	K_{p}^{\alpha,q_{2}}(\mathbb{R}^{n}).
	\label{herz-emb}
	\end{equation}
	
	Moreover, if $\alpha_{2}\leq \alpha_{1}$ and $0<q\leq \infty$, then
	\begin{equation}
	K_{p}^{\alpha_{1},q}(\mathbb{R}^{n})
	\hookrightarrow
	K_{p}^{\alpha_{2},q}(\mathbb{R}^{n}).
	\label{herz-emb1}
	\end{equation}
\end{remark}

A detailed discussion of the properties of these spaces may be found in the
monograph \cite{LYH08}, the papers \cite{LuYang1.95}, \cite{LuYang2.95}, 
\cite{Hernandez1998}, and the references therein.

If $\Omega \subset \mathbb{R}^{n}$ is open and $f:\Omega \rightarrow \mathbb{%
	R}$ is measurable, then we write $f\in \dot{K}_{p}^{\alpha,q}(\Omega)$
whenever $f\chi_{\Omega}\in \dot{K}_{p}^{\alpha,q}(\mathbb{R}^{n})$, and we
define 
\begin{equation*}
\|f\|_{\dot{K}_{p}^{\alpha,q}(\Omega)} = \|f\chi_{\Omega}\|_{\dot{K}%
	_{p}^{\alpha,q}(\mathbb{R}^{n})}.
\end{equation*}
Similarly, we define 
\begin{equation*}
\|f\|_{K_{p}^{\alpha,q}(\Omega)} = \|f\chi_{\Omega}\|_{K_{p}^{\alpha,q}(%
	\mathbb{R}^{n})}.
\end{equation*}

Let $V_{\alpha,p,q}$ denote the set of all $(\alpha,p,q)\in \mathbb{R}\times
[1,\infty]^{2}$ such that:

\begin{itemize}
	\item $\alpha<n-\dfrac{n}{p}$, $1\leq p\leq \infty$, and $1\leq q\leq \infty$%
	;
	
	\item $\alpha=n-\dfrac{n}{p}$, $1\leq p\leq \infty$, and $q=1$.
\end{itemize}

The next lemma gives a necessary and sufficient condition on the parameters $%
\alpha$, $p$, and $q$ ensuring that 
\begin{equation*}
\langle T_{f},\varphi\rangle = \int_{\Omega}f(x)\varphi(x)\,dx, \qquad
\varphi\in \mathcal{D}(\Omega),
\end{equation*}
defines a regular distribution $T_{f}\in \mathcal{D}^{\prime }(\Omega)$ for
every $f\in \dot{K}_{p}^{\alpha,q}(\Omega)$.

\begin{lemma}
	\label{L1loc} Let $\Omega \subset \mathbb{R}^{n}$ be open with $0\in \Omega$%
	, and let $1\leq p,q\leq \infty$ and $\alpha\in \mathbb{R}$. Then 
	\begin{equation*}
	\dot{K}_{p}^{\alpha,q}(\Omega) \hookrightarrow L_{\mathrm{loc}}^{1}(\Omega)
	\end{equation*}
	if and only if $(\alpha,p,q)\in V_{\alpha,p,q}$.
\end{lemma}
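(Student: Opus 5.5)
Since $\Omega$ is open and contains $0$, the only obstruction to local integrability sits at the origin: on any compact $K\subset\Omega$ with $0\notin K$, $K$ meets finitely many annuli $R_k$, and Hölder on each gives $\|f\|_{L^1(K)}\lesssim \sum_k |R_k|^{1/p'}\|f\chi_k\|_p<\infty$. So everything reduces to bounding $\|f\|_{L^1(B_N)}$ for a small ball $B_N=B(0,2^N)\subset\Omega$, or showing this can fail.

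\emph{Sufficiency.} Write $\|f\|_{L^1(B_N)}=\sum_{k\le N}\|f\chi_k\|_1$ and use Hölder on each annulus: $\|f\chi_k\|_1\le |R_k|^{1/p'}\|f\chi_k\|_p\lesssim 2^{kn/p'}\|f\chi_k\|_p=2^{k(n-n/p-\alpha)}\,2^{k\alpha}\|f\chi_k\|_p$. Then apply Hölder in $k$ with exponents $q,q'$:
\[
\|f\|_{L^1(B_N)}\lesssim \Bigl(\sum_{k\le N}2^{k(n-n/p-\alpha)q'}\Bigr)^{1/q'}\|f\|_{\dot K^{\alpha,q}_p(\Omega)}.
\]
If $\alpha<n-n/p$ the geometric series over $k\to-\infty$ converges for every $q'$ (for $q=1$, i.e.\ $q'=\infty$, take the supremum, which is finite). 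If $\alpha=n-n/p$ and $q=1$, the weights equal $1$ and the supremum is $1$. This gives the embedding, with constant depending on $N$. Combined with the first paragraph, it covers every compact $K\subset\Omega$.

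\emph{Necessity.} Assume $(\alpha,p,q)\notin V_{\alpha,p,q}$. I construct $f$ supported in $B_N$ with finite Herz norm but $\int_{B_N}|f|=\infty$. Take $f=\sum_{k\le N} a_k 2^{-kn}\chi_k$. Then $\|f\chi_k\|_p\approx a_k2^{-kn/p'}$ and $\|f\chi_k\|_1\approx a_k$, so
\[
\|f\|_{\dot K^{\alpha,q}_p}\approx \Bigl\|\bigl(a_k 2^{k(\alpha-n+n/p)}\bigr)_{k\le N}\Bigr\|_{\ell^q},\qquad \|f\|_{L^1(B_N)}\approx\sum_{k\le N}a_k .
\]
\begin{itemize}
\item If $\alpha>n-n/p$, take $a_k=1$. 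Then $2^{k(\alpha-n+n/p)}$ decays geometrically as $k\to-\infty$, so the Herz norm is finite (for any $q$), while $\sum a_k=\infty$.
\item If $\alpha=n-n/p$ and $q>1$, take $a_k=1/|k|$ for $k\le -1$. Then $(a_k)\in\ell^q\setminus\ell^1$.
\end{itemize}
In both cases $f\in\dot K^{\alpha,q}_p(\Omega)$ but $f\notin L^1_{\rm loc}(\Omega)$.

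The main care point is this endpoint case $\alpha=n-n/p$, $q>1$: there the embedding must fail exactly because $\ell^q\not\subset\ell^1$, so the counterexample has to realize the $\ell^q$ versus $\ell^1$ gap via the coefficients $a_k$. The cases $p=1$ (so $p'=\infty$, $n-n/p=0$) and $p=\infty$ need only the usual modifications of Hölder. I would also check that $f\in L^p_{\rm loc}(\mathbb{R}^n\setminus\{0\})$, which holds since each piece is bounded.
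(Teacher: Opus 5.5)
Your proof is correct and complete. The paper does not prove this lemma itself; it only cites \cite{Dr-Sobolev}. Your route is the standard one: for sufficiency, H\"{o}lder's inequality on each dyadic annulus followed by H\"{o}lder in the index $k$; for necessity, radial step functions $\sum_k a_k 2^{-kn}\chi_k$ concentrated at the origin, which exploit $\ell^q\not\subset\ell^1$ at the endpoint $\alpha=n-\frac{n}{p}$. The only cosmetic fix is to choose $N$ with $\overline{B_N}\subset\Omega$ (or to integrate over $\overline{B_{N-1}}$), so that the divergent integral is taken over a compact subset of $\Omega$.
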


\begin{remark}
	\label{L1loc1}
	For the proof of Lemma \ref{L1loc}, see \cite{Dr-Sobolev}. Let $\Omega
	\subset \mathbb{R}^{n}$ be open, $1\leq p,q\leq \infty$, and $\alpha\in 
	\mathbb{R}$. Then clearly 
	\begin{equation*}
	K_{p}^{\alpha,q}(\Omega) \hookrightarrow L_{\mathrm{loc}}^{1}(\Omega).
	\end{equation*}
\end{remark}

\begin{remark}
	In general, if $0\notin \Omega$, then the set $V_{\alpha,p,q}$ is not
	optimal. Consequently, Lemma \ref{L1loc} and Remark 	\ref{L1loc1} justify the definition of weak derivatives for functions belonging to Herz spaces.
\end{remark}

\begin{theorem}
	\label{dense} Let $\Omega \subset \mathbb{R}^{n}$ be an open set, $1\leq
	p<\infty$, and $1\leq q<\infty$.
	
	\medskip
	
	\noindent \textnormal{(i)} If $\alpha>-\dfrac{n}{p}$, then $C_{c}(\Omega)$
	is dense in $\dot{K}_{p}^{\alpha,q}(\Omega)$.
	
	\medskip
	
	\noindent \textnormal{(ii)} If $\alpha\in \mathbb{R}$, then $C_{c}(\Omega)$
	is dense in $K_{p}^{\alpha,q}(\Omega)$.
\end{theorem}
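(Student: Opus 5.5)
We reduce the density of $C_{c}(\Omega)$ to two steps: first approximate $f$ by functions supported in a compact subset of $\Omega\setminus\{0\}$, using $q<\infty$; then approximate such truncated functions in $L^{p}$ by continuous functions, using $p<\infty$. Throughout, $f$ is extended by zero outside $\Omega$, so $f\chi_{\Omega}=f$.

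\emph{Step 1 (truncation).} Treat (i) first. Let $f\in \dot{K}_{p}^{\alpha,q}(\Omega)$. Since $q<\infty$, the tail of $\sum_{k\in\mathbb{Z}}2^{k\alpha q}\|f\chi_{k}\|_{p}^{q}$ tends to zero. So for $\epsilon>0$ there are $N,M$ with
\begin{equation*}
\Big\|f-f\sum_{k=-M}^{N}\chi_{k}\Big\|_{\dot{K}_{p}^{\alpha,q}(\Omega)}<\epsilon .
\end{equation*}
Set $g=f\chi_{\{2^{-M-1}\le |x|<2^{N}\}}$; it vanishes near $0$ and near infinity. Next we cut $g$ away from $\partial\Omega$. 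Let $K_{j}=\{x\in\Omega:\operatorname{dist}(x,\mathbb{R}^{n}\setminus\Omega)\ge 1/j\}$ and $g_{j}=g\chi_{K_{j}}$. Only the finitely many annuli $R_{-M},\dots,R_{N}$ are involved, and on each of them $g\in L^{p}$ with $p<\infty$. By dominated convergence, $\|g-g_{j}\|_{\dot K_{p}^{\alpha,q}}\to0$. Hence we may assume $f$ is supported in a compact set $K\subset\Omega$ with $K\subset\{2^{-M-1}\le|x|\le 2^{N}\}$.

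\emph{Step 2 (continuous approximation).} On this annulus all weights $2^{k\alpha}$ are bounded by a constant $C(M,N,\alpha)$, and only finitely many terms occur. Therefore, for $h$ supported in $U=\{x\in\Omega:\operatorname{dist}(x,K)<\delta\}\cap\{2^{-M-2}<|x|<2^{N+1}\}$ with $\delta<\operatorname{dist}(K,\partial\Omega)$, we have
\begin{equation*}
\|h\|_{\dot K_{p}^{\alpha,q}}\le C\|h\|_{p},
\end{equation*}
where $C=C(M,N,\alpha,q)$. By density of $C_{c}(U)$ in $L^{p}(U)$ for $p<\infty$ (Lusin's theorem, or mollification with a small support radius), we choose $\phi\in C_{c}(U)\subset C_{c}(\Omega)$ with $\|f-\phi\|_{p}<\epsilon/C$. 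This finishes (i).

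\emph{The role of $\alpha>-n/p$, and part (ii).} Step 2 never needs $\alpha>-n/p$. What that hypothesis does is guarantee that $C_{c}(\Omega)\subset\dot K_{p}^{\alpha,q}(\Omega)$ when $0\in\Omega$. Indeed, for bounded compactly supported $\phi$,
\begin{equation*}
\sum_{k\to-\infty}2^{k\alpha q}\|\phi\chi_{k}\|_{p}^{q}\lesssim\sum_{k\le 0}2^{kq(\alpha+n/p)}<\infty .
\end{equation*}
Checking this membership is the only genuinely delicate point; the approximation itself works for any $\alpha$. For (ii) the same scheme applies, with two changes. The ball $B_{0}$ enters with an $L^{p}$ norm, so no cutting near the origin is needed; we truncate only at large $|x|$, using the tail of $\sum_{k\ge1}$ (again $q<\infty$). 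Moreover, $C_{c}(\Omega)\subset K_{p}^{\alpha,q}(\Omega)$ trivially for every $\alpha\in\mathbb{R}$. Steps 1 and 2 then go through verbatim with the weights bounded on $\{|x|<2^{N+1}\}$.
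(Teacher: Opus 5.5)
Your proof is correct. There is no in-paper argument to compare it with: the paper's proof of this theorem is only a reference to \cite{Dr-Sobolev} for (i), plus the remark that (ii) ``can be proved similarly''. What you give is a complete, self-contained proof along standard lines, in three steps.
\begin{enumerate}
\item Since $q<\infty$, you can discard all but finitely many dyadic annuli.
\item Since $p<\infty$, dominated convergence handles the cut-off $g\chi_{K_j}$ away from $\partial\Omega$.
\item On a bounded set kept away from the origin (or merely bounded, in the nonhomogeneous case), the Herz norm is dominated by $C\|\cdot\|_{p}$. Density of $C_{c}$ in $L^{p}$ then finishes the argument.
\end{enumerate}

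Your version also clarifies the hypotheses, which the citation does not. It shows that $C_{c}(\Omega\setminus\{0\})$ is dense in $\dot{K}_{p}^{\alpha,q}(\Omega)$ for every $\alpha\in\mathbb{R}$. So the condition $\alpha>-\frac{n}{p}$ in (i) matters only for the inclusion $C_{c}(\Omega)\subset\dot{K}_{p}^{\alpha,q}(\Omega)$ when $0\in\Omega$, not for the approximation itself. It also makes explicit why (ii) needs no restriction on $\alpha$: the ball $B_{0}$ carries an unweighted $L^{p}$ norm, so no excision of the origin is required.
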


\begin{proof}
	The proof of \textnormal{(i)} is given in \cite{Dr-Sobolev}. Assertion 
	\textnormal{(ii)} can be proved similarly.
\end{proof}

We now present an interpolation inequality for Herz spaces.

\begin{lemma}
	\label{interpolation2} Let $\Omega \subset \mathbb{R}^{n}$ be an open set, $%
	0<p_{0},p_{1},q_{0},q_{1}\leq \infty$, and $\alpha_{0},\alpha_{1}\in \mathbb{%
		R}$. Define 
	\begin{equation*}
	\alpha = (1-\theta)\alpha_{0} + \theta\alpha_{1}, \qquad \frac{1}{p} = \frac{%
		1-\theta}{p_{0}} + \frac{\theta}{p_{1}}, \qquad \frac{1}{q} = \frac{1-\theta%
	}{q_{0}} + \frac{\theta}{q_{1}}.
	\end{equation*}
	Then the interpolation inequalities 
	\begin{equation}
	\|f\|_{\dot{K}_{p}^{\alpha,q}(\Omega)} \leq \|f\|_{\dot{K}%
		_{p_{0}}^{\alpha_{0},q_{0}}(\Omega)}^{1-\theta} \|f\|_{\dot{K}%
		_{p_{1}}^{\alpha_{1},q_{1}}(\Omega)}^{\theta}  \label{Interpolation}
	\end{equation}
	hold for all $f\in \dot{K}_{p_{0}}^{\alpha_{0},q_{0}}(\Omega) \cap \dot{K}%
	_{p_{1}}^{\alpha_{1},q_{1}}(\Omega)$, and 
	\begin{equation}
	\|f\|_{K_{p}^{\alpha,q}(\Omega)} \leq
	\|f\|_{K_{p_{0}}^{\alpha_{0},q_{0}}(\Omega)}^{1-\theta}
	\|f\|_{K_{p_{1}}^{\alpha_{1},q_{1}}(\Omega)}^{\theta}  \label{Interpolation1}
	\end{equation}
	hold for all $f\in K_{p_{0}}^{\alpha_{0},q_{0}}(\Omega) \cap
	K_{p_{1}}^{\alpha_{1},q_{1}}(\Omega)$.
\end{lemma}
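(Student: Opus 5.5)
The plan is to apply Hölder's inequality twice: first inside each annulus $R_k$ (in the Lebesgue exponents $p_0,p_1$), and then to the sequence over $k$ (in the exponents $q_0,q_1$). Since $\|f\|_{\dot{K}_{p}^{\alpha,q}(\Omega)}=\|f\chi_\Omega\|_{\dot{K}_{p}^{\alpha,q}(\mathbb{R}^n)}$, we may replace $f$ by $f\chi_\Omega$ and work on $\mathbb{R}^n$ throughout.

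\emph{Step 1 (local step).} For fixed $k$, write $|f\chi_k| = |f\chi_k|^{1-\theta}|f\chi_k|^{\theta}$. Hölder's inequality with exponents $p_0/((1-\theta)p)$ and $p_1/(\theta p)$, which are conjugate because $\frac{1}{p}=\frac{1-\theta}{p_0}+\frac{\theta}{p_1}$, gives
\begin{equation*}
\|f\chi_k\|_{p}\leq \|f\chi_k\|_{p_0}^{1-\theta}\,\|f\chi_k\|_{p_1}^{\theta}.
\end{equation*}
This holds also for $p<1$ and for infinite exponents, since Hölder's inequality in the form $\| gh\|_{r}\le \|g\|_{r_0}\|h\|_{r_1}$ with $1/r=1/r_0+1/r_1$ is valid for all $0<r_0,r_1\le\infty$.

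\emph{Step 2 (weights).} Since $\alpha=(1-\theta)\alpha_0+\theta\alpha_1$, we have $2^{k\alpha}=(2^{k\alpha_0})^{1-\theta}(2^{k\alpha_1})^{\theta}$. Setting $a_k=2^{k\alpha_0}\|f\chi_k\|_{p_0}$ and $b_k=2^{k\alpha_1}\|f\chi_k\|_{p_1}$, Step 1 yields
\begin{equation*}
2^{k\alpha}\|f\chi_k\|_p\le a_k^{1-\theta}b_k^{\theta}.
\end{equation*}

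\emph{Step 3 (sequence step).} Apply the discrete Hölder inequality in $\ell^q$ with the same splitting $\frac1q=\frac{1-\theta}{q_0}+\frac{\theta}{q_1}$:
\begin{equation*}
\|(a_k^{1-\theta}b_k^\theta)\|_{\ell^q}\le \|(a_k)\|_{\ell^{q_0}}^{1-\theta}\|(b_k)\|_{\ell^{q_1}}^{\theta}.
\end{equation*}
This is exactly \eqref{Interpolation}.

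For the nonhomogeneous estimate \eqref{Interpolation1}, use the equivalent expression $\big(\sum_{k\ge0}2^{k\alpha q}\|f\chi_{\widehat R_k}\|_p^q\big)^{1/q}$ and run the same argument with $k\ge 0$ and $\widehat R_0=B_0$; it gives the inequality for that expression. The only point requiring care, and the main obstacle, is that the inequality as stated carries constant $1$ for the norm actually defined, namely $\|f\chi_{B_0}\|_p+(\sum_{k\ge1}\cdots)^{1/q}$, which is a sum rather than an $\ell^q$ combination. For this I would estimate each piece separately using Steps 1--3, obtaining $A\le A_0^{1-\theta}A_1^{\theta}$ for the ball term and $S\le S_0^{1-\theta}S_1^{\theta}$ for the tail. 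Then I would apply the elementary inequality $x_0^{1-\theta}x_1^{\theta}+y_0^{1-\theta}y_1^{\theta}\le (x_0+y_0)^{1-\theta}(y_1+x_1)^{\theta}$, which is Hölder's inequality for a sum of two terms, to conclude with constant $1$. The usual modifications for infinite $q$ or $p$ go through verbatim, with sums replaced by suprema.
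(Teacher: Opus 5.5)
Your argument is correct. Hölder on each annulus with the conjugate pair $p_0/((1-\theta)p)$, $p_1/(\theta p)$, followed by the discrete Hölder inequality in $k$, gives \eqref{Interpolation}. For the nonhomogeneous norm as actually defined (ball term plus $\ell^q$ tail), the two-term inequality $x_0^{1-\theta}x_1^{\theta}+y_0^{1-\theta}y_1^{\theta}\le (x_0+y_0)^{1-\theta}(x_1+y_1)^{\theta}$ correctly preserves constant $1$.

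The paper states this lemma without proof, so there is no alternative route to compare; your double-Hölder argument is the standard one. It tacitly assumes $0\le\theta\le 1$, as the statement does (the paper uses $0<\theta\le 1$ in Lemma~\ref{compact1}). The endpoint cases $\theta\in\{0,1\}$ are trivial.
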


\section{Herz-type Sobolev spaces}

In this section, we establish the basic properties of Herz-type Sobolev
spaces in analogy with the classical Sobolev spaces.

\begin{definition}
	Let $\Omega \subset \mathbb{R}^{n}$ be open, $\alpha \in \mathbb{R}$, $1\leq
	p,q\leq \infty$, and $m\in \mathbb{N}_{0}$.
	
	\medskip
	
	\noindent \textnormal{(i)} Let $(\alpha,p,q)\in V_{\alpha,p,q}$. The
	homogeneous Herz-type Sobolev space $\dot{K}_{p,m}^{\alpha,q}(\Omega)$ is
	defined as the set of all functions $f\in \dot{K}_{p}^{\alpha,q}(\Omega)$
	whose weak derivatives $D^{\beta}f$ belong to $\dot{K}_{p}^{\alpha,q}(%
	\Omega) $ for all multi-indices $\beta$ satisfying $|\beta|\leq m$.
	
	If $1\leq p,q<\infty$, we define the norm by 
	\begin{equation*}
	\|f\|_{\dot{K}_{p,m}^{\alpha,q}(\Omega)} = \left( \sum_{k=-\infty}^{\infty}
	2^{k\alpha q} \left( \sum_{|\beta|\leq m} \|(D^{\beta}f)\chi_{R_{k}\cap
		\Omega}\|_{p}^{p} \right)^{q/p} \right)^{1/q},
	\end{equation*}
	while for $q=\infty$ we set 
	\begin{equation*}
	\|f\|_{\dot{K}_{p,m}^{\alpha,\infty}(\Omega)} = \sup_{k\in \mathbb{Z}}
	2^{k\alpha} \left( \sum_{|\beta|\leq m} \|(D^{\beta}f)\chi_{R_{k}\cap
		\Omega}\|_{p}^{p} \right)^{1/p}.
	\end{equation*}
	
	\medskip
	
	\noindent \textnormal{(ii)} The nonhomogeneous Herz-type Sobolev space $%
	K_{p,m}^{\alpha,q}(\Omega)$ is defined as the set of all functions $f\in
	K_{p}^{\alpha,q}(\Omega)$ whose weak derivatives $D^{\beta}f$ belong to $%
	K_{p}^{\alpha,q}(\Omega)$ for all multi-indices $\beta$ satisfying $%
	|\beta|\leq m$.
	
	If $1\leq p,q<\infty$, we define the norm by 
	\begin{equation*}
	\|f\|_{K_{p,m}^{\alpha,q}(\Omega)} = \left( \sum_{k=0}^{\infty} 2^{k\alpha
		q} \left( \sum_{|\beta|\leq m} \|(D^{\beta}f)\chi_{\widehat{R}_{k}\cap
		\Omega}\|_{p}^{p} \right)^{q/p} \right)^{1/q},
	\end{equation*}
	while for $q=\infty$ we set 
	\begin{equation*}
	\|f\|_{K_{p,m}^{\alpha,\infty}(\Omega)} = \sup_{k\in \mathbb{N}_{0}}
	2^{k\alpha} \left( \sum_{|\beta|\leq m} \|(D^{\beta}f)\chi_{\widehat{R}%
		_{k}\cap \Omega}\|_{p}^{p} \right)^{1/p}.
	\end{equation*}
\end{definition}

\begin{remark}
	It is immediate that if $p=q$ and $\alpha=0$, then 
	\begin{equation*}
	\dot{K}_{p,m}^{0,p}(\Omega) = K_{p,m}^{0,p}(\Omega) = W_{p}^{m}(\Omega).
	\end{equation*}
\end{remark}

As in the classical Sobolev setting (see \cite{AdamsFournier03} and \cite%
{Burenkov}), we also have the following results.

\begin{theorem}
	Let $\Omega \subset \mathbb{R}^{n}$ be open, $\alpha\in \mathbb{R}$, $1\leq
	p,q\leq \infty$, and $m\in \mathbb{N}_{0}$.
	
	\medskip
	
	\noindent \textnormal{(i)} Let $(\alpha,p,q)\in V_{\alpha,p,q}$. Then the
	homogeneous Herz-type Sobolev space $\dot{K}_{p,m}^{\alpha,q}(\Omega)$ is a
	Banach space.
	
	\medskip
	
	\noindent \textnormal{(ii)} The nonhomogeneous Herz-type Sobolev space $%
	K_{p,m}^{\alpha,q}(\Omega)$ is a Banach space.
\end{theorem}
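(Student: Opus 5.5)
We follow the classical proof that $W_{p}^{m}(\Omega)$ is complete, using that the underlying Herz spaces are Banach spaces and that, under the stated hypotheses, convergence in these spaces implies convergence in $L_{\mathrm{loc}}^{1}(\Omega)$. This last fact is Lemma \ref{L1loc} in the homogeneous case (which is exactly why we assume $(\alpha,p,q)\in V_{\alpha,p,q}$) and Remark \ref{L1loc1} in the nonhomogeneous case.

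First, we check that the given expression is a norm. It is equivalent to
\begin{equation*}
\sum_{|\beta|\leq m}\|D^{\beta}f\|_{\dot{K}_{p}^{\alpha,q}(\Omega)},
\end{equation*}
with constants depending only on $n,m,p,q$. Indeed, for each $k$ the inner $\ell^{p}$-sum over the finitely many $\beta$ is comparable to the $\ell^{q}$-sum over $\beta$ of $\|(D^{\beta}f)\chi_{R_{k}\cap\Omega}\|_{p}$. Interchanging the finite sum with the $\ell^{q}(\mathbb{Z})$ norm then gives the equivalence. The norm axioms follow directly from Minkowski's inequality in $\ell^{p}$, $\ell^{q}$ and $L^{p}$. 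It is therefore enough to prove completeness with respect to the equivalent norm $\sum_{|\beta|\leq m}\|D^{\beta}f\|_{\dot{K}_{p}^{\alpha,q}(\Omega)}$.

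Next, let $(f_{j})$ be Cauchy in $\dot{K}_{p,m}^{\alpha,q}(\Omega)$. For each $|\beta|\leq m$, the sequence $(D^{\beta}f_{j})$ is Cauchy in $\dot{K}_{p}^{\alpha,q}(\Omega)$, which is a Banach space. Completeness of $\dot{K}_{p}^{\alpha,q}(\mathbb{R}^{n})$ gives a limit, and the limit of $(D^{\beta}f_j)\chi_\Omega$ vanishes a.e. outside $\Omega$, so it defines an element of $\dot{K}_{p}^{\alpha,q}(\Omega)$. Hence there are $g_{\beta}\in\dot{K}_{p}^{\alpha,q}(\Omega)$ with $D^{\beta}f_{j}\to g_{\beta}$, and we set $f=g_{0}$.

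The key step is to show $D^{\beta}f=g_{\beta}$ in the weak sense. By Lemma \ref{L1loc}, the embedding $\dot{K}_{p}^{\alpha,q}(\Omega)\hookrightarrow L_{\mathrm{loc}}^{1}(\Omega)$ is continuous, so $D^{\beta}f_{j}\to g_{\beta}$ in $L^{1}(K)$ for every compact $K\subset\Omega$. Fix $\varphi\in\mathcal{D}(\Omega)$ and let $K=\operatorname{supp}\varphi$. Then
\begin{equation*}
\Big|\int_{\Omega}(D^{\beta}f_{j}-g_{\beta})\varphi\,dx\Big|\leq\|\varphi\|_{\infty}\|D^{\beta}f_{j}-g_{\beta}\|_{L^{1}(K)}\to0,
\end{equation*}
and in the same way $\int_{\Omega}f_{j}D^{\beta}\varphi\,dx\to\int_{\Omega}fD^{\beta}\varphi\,dx$. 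Passing to the limit in
\begin{equation*}
\int_{\Omega}f_{j}D^{\beta}\varphi\,dx=(-1)^{|\beta|}\int_{\Omega}D^{\beta}f_{j}\,\varphi\,dx
\end{equation*}
gives $D^{\beta}f=g_{\beta}$. Consequently $f\in\dot{K}_{p,m}^{\alpha,q}(\Omega)$ and $f_{j}\to f$ in the norm of $\dot{K}_{p,m}^{\alpha,q}(\Omega)$.

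The only potential obstacle is the continuity of the embedding into $L^{1}_{\mathrm{loc}}$, as opposed to a mere set inclusion. If Lemma \ref{L1loc} is read only as an inclusion, we obtain continuity directly. Split a compact $K$ as $K\cap B_{k_{0}}$ and $K\setminus B_{k_{0}}$. On the annuli $R_{k}\cap K$ we apply Hölder's inequality to get
\begin{equation*}
\|f\chi_{R_{k}\cap K}\|_{1}\lesssim 2^{kn/p'}\|f\chi_{R_{k}}\|_{p}=2^{k(n/p'-\alpha)}\,2^{k\alpha}\|f\chi_{R_{k}}\|_{p}.
\end{equation*}
Summing over $k\leq k_{0}$ with Hölder's inequality in $\ell^{q}$ converges precisely when $\alpha<n-n/p$, or when $\alpha=n-n/p$ and $q=1$; this is the role of $V_{\alpha,p,q}$. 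Finitely many annuli cover $K\setminus B_{k_{0}}$. Part (ii) is identical, using Remark \ref{L1loc1} and the block $\widehat{R}_{0}=B_{0}$, where Hölder's inequality on a bounded set suffices with no restriction on the parameters.
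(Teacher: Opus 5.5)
Your proof is correct and is the classical completeness argument for $W^{m}_{p}(\Omega)$ from Adams--Fournier, which is exactly what the paper invokes without writing it out. You use the completeness of the underlying Herz space for each sequence $D^{\beta}f_{j}$, and then the continuous embedding into $L^{1}_{\mathrm{loc}}(\Omega)$ to identify the limits as weak derivatives. One small point: Lemma \ref{L1loc} is stated only for $0\in\Omega$, so for a general open $\Omega$ it is your direct H\"older estimate over the annuli meeting a compact $K\subset\Omega$ that supplies the local $L^{1}$ bound; when $0\notin\Omega$ only finitely many annuli meet $K$, so the bound holds for all parameters.
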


\begin{theorem}
	\label{density} Let $\Omega \subset \mathbb{R}^{n}$ be open, $\alpha\in 
	\mathbb{R}$, and $m\in \mathbb{N}_{0}$.
	
	\medskip
	
	\noindent \textnormal{(i)} Let $1\leq p,q<\infty$ and 
	\begin{equation*}
	-\frac{n}{p}<\alpha<n-\frac{n}{p},
	\end{equation*}
	or 
	\begin{equation*}
	1\leq p<\infty, \qquad \alpha=n-\frac{n}{p}, \qquad q=1.
	\end{equation*}
	Then  $C^{\infty}(\Omega)\cap \dot{K}_{p,m}^{\alpha,q}(\Omega)$ is dense in $\dot{K}_{p,m}^{\alpha,q}(\Omega)$.
	
	\medskip
	
	\noindent \textnormal{(ii)} Let $1\leq p,q<\infty$. Then 
	$C^{\infty}(\Omega)\cap K_{p,m}^{\alpha,q}(\Omega) $
	is dense in $K_{p,m}^{\alpha,q}(\Omega)$.
\end{theorem}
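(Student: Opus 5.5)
The plan is to adapt the Meyers--Serrin argument (as in \cite{AdamsFournier03}) to the Herz norms. The localization and mollification steps are unchanged. The only new point is controlling the error in a norm that sums annular pieces with weights $2^{k\alpha}$.

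First, build the standard locally finite cover of $\Omega$. Let
\begin{equation*}
\Omega_j=\{x\in\Omega:|x|<j,\ \mathrm{dist}(x,\partial\Omega)>1/j\},\qquad j\geq 1,
\end{equation*}
with $\Omega_0=\Omega_{-1}=\emptyset$, and set $U_j=\Omega_{j+1}\setminus\overline{\Omega_{j-1}}$. Take a smooth partition of unity $\{\psi_j\}$ subordinate to $\{U_j\}$ with $\sum_j\psi_j=1$ on $\Omega$.

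Given $f$ in the space and $\epsilon>0$, Leibniz' rule shows $\psi_jf$ has weak derivatives $D^\beta(\psi_jf)=\sum_{\gamma\le\beta}\binom{\beta}{\gamma}D^{\beta-\gamma}\psi_j\,D^\gamma f$. These lie in $L^p$, since $\psi_j$ and its derivatives are bounded with compact support in $\Omega$. Here Lemma \ref{L1loc} (resp. Remark \ref{L1loc1}) guarantees $f\in L^1_{\mathrm{loc}}(\Omega)$, so the weak derivatives make sense. Actually, we need $D^\gamma f\in L^p(\operatorname{supp}\psi_j)$. In the nonhomogeneous case this is immediate. In the homogeneous case it holds away from the origin, while near $0$ it holds because $\alpha<n-n/p$ gives $2^{k\alpha}\to\infty$ or $\alpha\le 0$ controls\ldots

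More carefully: in case (i) with $\alpha>-n/p$, I will instead work directly with the Herz norm of $\psi_jf$. Choose $\epsilon_j$ so small that $\operatorname{supp}(\phi_{\epsilon_j}*(\psi_jf))\subset\Omega_{j+2}\setminus\overline{\Omega_{j-2}}=:W_j$. Also require
\begin{equation*}
\|\phi_{\epsilon_j}*(\psi_jf)-\psi_jf\|_{\dot K^{\alpha,q}_{p,m}(\Omega)}<2^{-j}\epsilon .
\end{equation*}
Then $g=\sum_j\phi_{\epsilon_j}*(\psi_jf)$ is a locally finite sum, so $g\in C^\infty(\Omega)$. Moreover, the triangle inequality in the Banach space $\dot K^{\alpha,q}_{p,m}(\Omega)$ gives $\|g-f\|\le\sum_j\|\phi_{\epsilon_j}*(\psi_jf)-\psi_jf\|<\epsilon$. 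The series converges absolutely, so $g\in \dot K^{\alpha,q}_{p,m}(\Omega)$ and also $g-f$ is the limit.

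The main obstacle is the single-piece estimate: for $h=\psi_jf$, compactly supported in $\Omega$, we need $\|\phi_\delta*h-h\|_{\dot K^{\alpha,q}_{p,m}}\to0$ as $\delta\to0$. Since $D^\beta(\phi_\delta*h)=\phi_\delta*D^\beta h$, it suffices to show $\phi_\delta*u\to u$ in $\dot K^{\alpha,q}_p(\mathbb{R}^n)$ for $u=D^\beta h\in\dot K^{\alpha,q}_p$ with compact support. Away from the origin the weight is bounded above and below on a bounded set, so the Herz norm there is equivalent to $L^p$, and convergence is the classical one. Near the origin, mollification does not commute well with the weight, so I would use Theorem \ref{dense}. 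Approximate $u$ by $v\in C_c$ with $\|u-v\|_{\dot K^{\alpha,q}_p}$ small. Then $\phi_\delta*v\to v$ uniformly with supports in a fixed ball, and a fixed bounded ball has finite $\dot K^{\alpha,q}_p$-norm of its indicator exactly when $\alpha>-n/p$. It then remains to bound $\|\phi_\delta*(u-v)\|_{\dot K^{\alpha,q}_p}\le C\|u-v\|_{\dot K^{\alpha,q}_p}$ uniformly in $\delta\le1$ for supports in a fixed ball, i.e. uniform boundedness of mollifiers on local Herz spaces.

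For this bound I would use that $\phi_\delta*$ is dominated by the Hardy--Littlewood maximal function. The restriction $-n/p<\alpha<n-n/p$ is precisely the range where $M$ is bounded on $\dot K^{\alpha,q}_p$ for $p>1$. For $p=1$, or the endpoint $\alpha=n-n/p$ with $q=1$, I would instead estimate directly by Minkowski's inequality: split $u$ by annuli and use the fact that $\phi_\delta*(u\chi_i)$ lives in annuli $R_k$ with $k\ge i-1$ or in $B_{i+1}$. This should give an estimate by a discrete convolution with geometric kernel, which is summable in exactly the stated range, the endpoint requiring $q=1$. Case (ii) is simpler, since the nonhomogeneous norm is equivalent to $L^p$ on bounded sets, so only the classical argument is needed.
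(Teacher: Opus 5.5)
The paper does not prove this theorem; it only refers to \cite{Dr-Herz-Sobolev2}, so there is no in-paper argument to compare yours against. Judged on its own, your Meyers--Serrin adaptation (as in \cite{AdamsFournier03}) is correct, and the reduction is the right one. Everything comes down to showing $\phi_\delta*u\to u$ in $\dot K^{\alpha,q}_p(\mathbb{R}^n)$ for compactly supported $u$. You get this from Theorem~\ref{dense} (this is where $\alpha>-\frac np$ enters, together with $\chi_B\in\dot K^{\alpha,q}_p$) plus a bound on $\phi_\delta*$ that is uniform in $\delta$. The final summation is also fine: the series converges absolutely in the Banach space $\dot K^{\alpha,q}_{p,m}(\Omega)$ and locally finitely, so its norm limit is $g-f$. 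Case (ii) is indeed just the classical argument, because the nonhomogeneous norm is equivalent to the $W^m_p$ norm on functions supported in a fixed ball.

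Two spots need cleaning up.

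First, delete the aside claiming $D^\gamma f\in L^p(\operatorname{supp}\psi_j)$ near the origin. For $\alpha>0$ this fails in general: near $0$, the function $|x|^{-s}$ with $\frac np\le s<\frac np+\alpha$ lies in $\dot K^{\alpha,q}_p$ but not in $L^p$. All you need is the lattice property $|D^\beta(\psi_jf)|\le C_j\sum_{\gamma\le\beta}|D^\gamma f|$, which gives $\psi_jf\in\dot K^{\alpha,q}_{p,m}(\Omega)$.

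Second, the uniform bound for $p=1$ and at the endpoint is only sketched, and near the origin the kernel is not really a convolution kernel. Split $w=w\chi_{B(0,4\delta)}+w\chi_{\{|x|\ge 4\delta\}}$.
\begin{itemize}
\item The outer part is handled by Young's inequality, because each annular piece is spread only into $R_{i-1}\cup R_i\cup R_{i+1}$.
\item For the inner part, use $\|\phi_\delta*(w\chi_{B(0,4\delta)})\|_\infty\lesssim\delta^{-n}\|w\chi_{B(0,4\delta)}\|_1$ and the fact that the support lies in $B(0,5\delta)$. This gives
\begin{equation*}
2^{k\alpha}\bigl\|\bigl(\phi_\delta*(w\chi_{B(0,4\delta)})\bigr)\chi_k\bigr\|_p\lesssim\Bigl(\frac{2^k}{\delta}\Bigr)^{\alpha+\frac np}\sum_{i:\,2^i<8\delta}\Bigl(\frac{2^i}{\delta}\Bigr)^{\frac n{p'}-\alpha}2^{i\alpha}\|w\chi_i\|_p ,\qquad 2^k<10\delta .
\end{equation*}
\end{itemize}
The $\ell^q$-sum in $k$ is bounded because $\alpha>-\frac np$. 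The sum in $i$ is bounded by $\|w\|_{\dot K^{\alpha,\infty}_p}$ when $\alpha<\frac n{p'}$. When $\alpha=\frac n{p'}$ the $i$-factor no longer decays, so the sum is controlled only by $\|w\|_{\dot K^{\alpha,1}_p}$; this is exactly where $q=1$ is forced.

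This estimate holds for every $p\ge1$ and every $\delta>0$. So it also removes the need to cite the boundedness of the maximal operator on $\dot K^{\alpha,q}_p$ for $p>1$, which is correct but external to the paper.
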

For the  proof see; \cite{Dr-Herz-Sobolev2}.

\subsection{Sobolev embeddings}

In this subsection, we recall several embedding results for Herz-type Sobolev spaces proved in \cite{Dr-Herz-Sobolev2}.

\begin{definition}
	Let $v\in \mathbb{R}^{n}\setminus \{0\}$ and, for each $x\neq 0$, let $%
	\angle (x,v)$ denote the angle between the position vector $x$ and $v$. Let $%
	\kappa$ satisfy $0<\kappa <\pi$. The set 
	\begin{equation*}
	C=\{x\in \mathbb{R}^{n}:x=0 \text{ or } 0<|x|\leq \delta,\ \angle (x,v)\leq
	\kappa /2\}
	\end{equation*}
	is called a finite cone with vertex at the origin, height $\delta$, axis
	direction $v$, and aperture angle $\kappa$.
\end{definition}

\begin{remark}
	Let $C$ be a finite cone with vertex at the origin. Observe that 
	\begin{equation*}
	x+C=\{x+y:y\in C\}
	\end{equation*}
	is a finite cone with vertex at $x$, having the same dimensions and axis
	direction as $C$, obtained by a parallel translation of $C$.
\end{remark}

We are now in a position to define domains satisfying the cone condition.

\begin{definition}
	\label{conecondition} Let $\Omega \subset \mathbb{R}^{n}$ be open. We say
	that $\Omega$ satisfies the cone condition if there exists a finite cone $C$
	such that, for every $x\in \Omega$, there exists a finite cone $C_{x}\subset
	\Omega$ with vertex at $x$ that is congruent to $C$.
\end{definition}

\begin{remark}
	In Definition \ref{conecondition}, the cone $C_{x}$ is not necessarily
	obtained from $C$ by parallel translation, but rather by a rigid motion.
\end{remark}
After these preparations, we are ready to state the first Sobolev embedding
theorem.

\begin{theorem}
	\label{embeddingsfirst} Let $\Omega \subset \mathbb{R}^{n}$ be a domain
	satisfying the cone condition, and let $m\in \mathbb{N}$. Assume that 
	$
	1<p<\infty,  1\leq r<\infty,
	$
	and 
	\begin{equation}
	\frac{n}{q} = \frac{n}{p}-m-\alpha _{1}+\alpha _{2} >0, \qquad -\frac{n}{q}%
	<\alpha _{1}\leq \alpha _{2}<n-\frac{n}{p}.  \label{sobolevassumption1}
	\end{equation}
	Assume furthermore that 
	$m>\alpha _{2}-\alpha _{1}.$
	Then 
	\begin{equation*}
	\dot{K}_{p,m}^{\alpha _{2},r}(\Omega ) \hookrightarrow \dot{K}_{q}^{\alpha
		_{1},r}(\Omega )
	\end{equation*}
	and 
	\begin{equation*}
	K_{p,m}^{\alpha _{2},r}(\Omega ) \hookrightarrow K_{q}^{\alpha
		_{1},r}(\Omega ).
	\end{equation*}
\end{theorem}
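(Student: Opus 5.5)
The approach is to reduce the embedding to the classical Sobolev representation on cone domains and then apply a weighted Hardy--Littlewood--Sobolev (Stein--Weiss) inequality in Herz form. By Theorem \ref{density} (the hypotheses $-\frac{n}{p}<-\frac{n}{q}<\alpha_1\le\alpha_2<n-\frac{n}{p}$ place us in its range), it suffices to prove
\begin{equation*}
\|f\|_{\dot{K}_{q}^{\alpha_1,r}(\Omega)}\lesssim \|f\|_{\dot{K}_{p,m}^{\alpha_2,r}(\Omega)}
\end{equation*}
for $f\in C^{\infty}(\Omega)\cap\dot{K}_{p,m}^{\alpha_2,r}(\Omega)$. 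The general case then follows by a Fatou argument along an approximating sequence, which converges a.e. along a subsequence. The nonhomogeneous statement is handled identically with $\widehat{R}_k$, $k\ge0$.

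For smooth $f$ on a domain with the cone condition, I would use the standard Sobolev integral representation (as in \cite{AdamsFournier03}, \cite{Burenkov}). For $x\in\Omega$ with cone $C_x\subset\Omega$,
\begin{equation*}
|f(x)|\lesssim \sum_{|\beta|\le m}\int_{C_x}\frac{|D^\beta f(y)|}{|x-y|^{n-m}}\,dy+\sum_{|\beta|<m}\int_{C_x}|D^\beta f(y)|\,dy ,
\end{equation*}
where the second sum carries a bounded cutoff weight. Setting $g=\sum_{|\beta|\le m}|D^\beta f|\chi_\Omega$, this gives $|f|\lesssim I_m g + \text{(local averages of } g)$, with $I_m$ the Riesz potential truncated to $|x-y|\le\delta$.

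The main step is the Herz-space boundedness
\begin{equation*}
I_m:\dot{K}_p^{\alpha_2,r}\to\dot{K}_q^{\alpha_1,r},\qquad \frac{n}{q}=\frac{n}{p}-m-\alpha_1+\alpha_2 .
\end{equation*}
Since only the truncated kernel appears, I would in fact use a kernel of order $\gamma:=m-(\alpha_2-\alpha_1)\in(0,m]$, which is positive because $m>\alpha_2-\alpha_1$. Write $|x-y|^{m-n}\le \delta^{m-\gamma}|x-y|^{\gamma-n}$, so that $\frac1q=\frac1p-\frac{\gamma}{n}$ is the exact HLS relation and $\frac1q>0$. I would then prove $\|I_\gamma g\|_{\dot{K}_q^{\alpha_1,r}}\lesssim\|g\|_{\dot{K}_p^{\alpha_1,r}}$ by the usual annulus decomposition, and finish with $\|g\|_{\dot{K}_p^{\alpha_1,r}}$ versus $\|g\|_{\dot{K}_p^{\alpha_2,r}}$.

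The last comparison is the main obstacle. The embedding $\alpha_2\ge\alpha_1$ holds only where $|x|\gtrsim1$, as in \eqref{herz-emb1}; near the origin it goes the wrong way. To deal with this, I would exploit the truncation $|x-y|\le\delta$ together with the Stein--Weiss weighted inequality $\||x|^{\alpha_1}I_m g\|_{L^q}\lesssim\||x|^{\alpha_2}g\|_{L^p}$ on each scale. Its requirements are $\alpha_1\le\alpha_2$, $\alpha_1>-n/q$, $\alpha_2<n-n/p$, and the homogeneity balance, exactly \eqref{sobolevassumption1}. Concretely, I split
\begin{equation*}
\chi_k I_m g=\sum_j \chi_k I_m(g\chi_j)
\end{equation*}
into the regions $j\le k-2$, $|j-k|\le1$, and $j\ge k+2$. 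In the off-diagonal regions the kernel is comparable to $2^{\max(j,k)(m-n)}$, and H\"older gives geometric factors $2^{(j-k)(\cdots)}$. These are summable in $\ell^r$ precisely because $\alpha_1>-n/q$ and $\alpha_2<n-n/p$. The diagonal region is handled by the classical HLS/Young inequality on $R_k$, rescaled, with the factor $2^{k(\alpha_1-\alpha_2+\ldots)}$ absorbed by the truncation. Finally, the lower-order local-average terms are bounded directly by H\"older on annuli, using the same geometric-sum argument.
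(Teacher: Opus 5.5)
The paper does not prove this theorem. It is quoted from \cite{Dr-Herz-Sobolev2}, so there is no in-paper argument to compare yours against. Judged on its own, your final route is correct. The integral representation on cones reduces everything to a Herz-space Stein--Weiss bound $\|I_m g\|_{\dot{K}_q^{\alpha_1,r}}\lesssim\|g\|_{\dot{K}_p^{\alpha_2,r}}$ for the \emph{untruncated} Riesz potential, and your annulus decomposition proves it under exactly \eqref{sobolevassumption1}. Write $a_j=2^{j\alpha_2}\|g\chi_j\|_p$. The blocks $j\le k-2$ and $j\ge k+2$ contribute at most $2^{-(k-j)(n-\frac np-\alpha_2)}a_j$ and $2^{-(j-k)(\alpha_1+\frac nq)}a_j$ to $2^{k\alpha_1}\|\chi_k I_m g\|_q$, and Young's inequality in $\ell^r$ finishes. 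The density and Fatou reduction is also legitimate, since $\alpha_2\ge\alpha_1>-\frac nq>-\frac np$.

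Three points should be tightened.

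First, for the homogeneous space, discard the initial detour through $\dot{K}_p^{\alpha_1,r}$; it fails near the origin, as you note. It is, however, already a complete and shorter proof of the nonhomogeneous embedding. There $K_p^{\alpha_2,r}\hookrightarrow K_p^{\alpha_1,r}$ by \eqref{herz-emb1}, the truncated kernel is bounded by $\delta^{m-\gamma}$ times the kernel of $I_\gamma$, and $I_\gamma$ maps $K_p^{\alpha_1,r}$ into $K_q^{\alpha_1,r}$ when $-\frac nq<\alpha_1<n-\frac np$. This is the one place where your truncation is genuinely used.

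Second, the truncation $|x-y|\le\delta$ is not what handles the diagonal block. Near the origin, where the difficulty lives, it is vacuous. For $x\in R_k$, $y\in R_j$, $|j-k|\le 1$, one has $|x-y|\le 2^{k+2}$, hence $|x-y|^{m-n}\le 2^{(k+2)(\alpha_2-\alpha_1)}|x-y|^{\gamma-n}$. Unweighted HLS of order $\gamma$ then gives $2^{k\alpha_1}\|\chi_k I_m(g\chi_j)\|_q\lesssim a_j$ at every scale. In the nonhomogeneous case the same bound with $|x-y|\le 3$ treats $\widehat{R}_0=B_0$.

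Third, the hypotheses force $m<n$, because $m=\gamma+(\alpha_2-\alpha_1)<(\frac np-\frac nq)+(n-\frac np+\frac nq)=n$. Hence the lower-order terms satisfy $\int_{C_x}|D^\beta f(y)|\,dy\le\delta^{n-m}\int_{C_x}|D^\beta f(y)|\,|x-y|^{m-n}\,dy$. They are simply absorbed into $I_m g$, and no separate H\"older argument is needed.
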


\begin{remark}
	Let $\Omega \subset \mathbb{R}^{n}$ be a domain satisfying the cone
	condition. Theorem \ref{embeddingsfirst} includes the classical Sobolev
	inequality as a special case. Moreover, by combining Theorem \ref%
	{embeddingsfirst} with the embeddings \eqref{herz-emb}, we obtain 
	\begin{equation*}
	W_{p}^{m}(\Omega ,|\cdot |^{\alpha _{2}p}) \hookrightarrow \dot{K}%
	_{q}^{\alpha _{1},p}(\Omega ) \hookrightarrow L^{q}(\Omega ,|\cdot |^{\alpha
		_{1}q}),
	\end{equation*}
	under the assumptions of Theorem \ref{embeddingsfirst} with $r=p$. In
	particular, 
	\begin{equation*}
	W_{p}^{m}(\Omega ) \hookrightarrow \dot{K}_{q}^{0,p}(\Omega )
	\hookrightarrow L^{q}(\Omega )
	\end{equation*}
	whenever 
	$
	1<p<\infty,  0<m<\frac{n}{p},
	$
	and 
	\begin{equation*}
	\frac{n}{q}=\frac{n}{p}-m.
	\end{equation*}
\end{remark}

In the next theorem, we consider the case $p=q$ in Theorem \ref%
{embeddingsfirst}. 

\begin{theorem}
	\label{embedingsq=p} Let the domain $\Omega \subset \mathbb{R}^{n}$ satisfy
	the cone condition, and let $m\in \mathbb{N}$. Assume that $1\le p<\infty$, $%
	1\le r<\infty$, and 
	\begin{equation*}
	-\frac{n}{p}<\alpha_{1}\le \alpha_{2}<n-\frac{n}{p}, \qquad m\ge
	\alpha_{2}-\alpha_{1}.
	\end{equation*}
	Then 
	\begin{equation*}
	\dot{K}_{p,m}^{\alpha_{2},r}(\Omega) \hookrightarrow \dot{K}%
	_{p}^{\alpha_{1},r}(\Omega)
	\end{equation*}
	and 
	\begin{equation*}
	K_{p,m}^{\alpha_{2},r}(\Omega) \hookrightarrow K_{p}^{\alpha_{1},r}(\Omega).
	\end{equation*}
\end{theorem}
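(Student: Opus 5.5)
The plan is to prove a single-step estimate and then iterate it. Using the cone condition, I will derive a weighted Hardy-type inequality on dyadic annuli. It loses one power of the weight in exchange for one derivative:
\begin{equation*}
\|f\|_{\dot{K}_{p}^{\gamma-1,r}(\Omega)}\lesssim \|f\|_{\dot{K}_{p,1}^{\gamma,r}(\Omega)},\qquad -\frac{n}{p}<\gamma-1<\gamma<n-\frac{n}{p}.
\end{equation*}
Interpolating between this estimate and the trivial case $\gamma-1$ replaced by $\gamma$ (via Lemma \ref{interpolation2}) handles a fractional loss $\alpha_2-\alpha_1\in(0,1]$. Induction on $m$ then handles $\alpha_2-\alpha_1\le m$: if $\alpha_2-\alpha_1=s\le m$, I split $s=s_1+\dots+s_m$ with $s_j\in[0,1]$ and pass through intermediate weights $\alpha_2-s_1-\dots-s_j$. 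All of these lie in $(-n/p,n-n/p)$, since the interval is convex. At each step the estimate is applied to $D^\beta f$ with $|\beta|\le m-j$. The case $\alpha_1=\alpha_2$ is trivial.

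For the one-step estimate, I first use density (Theorem \ref{density}) to reduce to smooth $f$ in $\dot{K}^{\gamma,r}_{p,1}(\Omega)$. For $x\in\Omega$, I take the cone $C_x\subset\Omega$ and write $f(x)$ through the standard Sobolev integral representation on $C_x$. This is the cone-condition representation used for Theorem \ref{embeddingsfirst}, specialised to first order:
\begin{equation*}
|f(x)|\lesssim \int_{C_x}|f(y)|\,dy+\int_{C_x}\frac{|\nabla f(y)|}{|x-y|^{n-1}}\,dy .
\end{equation*}
The first term is a bounded-kernel average over a set of diameter at most $\delta$. The second is a Riesz potential of order $1$ truncated at scale $\delta$.

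I then localise to annuli $R_k$. For $2^k\gtrsim\delta$, the points $y\in C_x$ lie in $R_{k-1}\cup R_k\cup R_{k+1}$ (up to a bounded number of neighbours). Young's inequality with the $L^1$ kernel $|z|^{1-n}\chi_{|z|\le\delta}$ therefore gives $\|f\chi_k\|_p\lesssim\sum_{|i|\le N}\|(|f|+|\nabla f|)\chi_{k+i}\|_p$. Because $2^{k(\gamma-1)}\le C2^{k\gamma}$ when $2^k\gtrsim\delta$, this is harmless.

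The region $2^k\ll\delta$ near the origin is the main obstacle. There the cone from $x$ reaches far out of $R_k$, and the loss of the power $2^{-k}$ must be recovered. I would first try splitting the potential into the pieces $|x-y|\le 2^{k-2}$, $|y|\sim 2^k$ and $|y|\ge 2^{k+1}$, and also $|y|\le 2^{k-2}$ if the cone points inward. The local piece gives $2^k\|\nabla f\chi_{\tilde R_k}\|_p$. The far pieces are bounded by $\sum_{j}2^{j(1-n)}2^{jn/p'}\|\nabla f\chi_j\|_p\,2^{kn/p}$ and, for the inward part, analogously with $j<k$. After multiplying by $2^{k(\gamma-1)}$, these become discrete Hardy-type sums $\sum_j 2^{(k-j)(\gamma-1+n/p)}a_j$ over $j>k$ and $\sum_j2^{(j-k)(n-n/p-\gamma)}a_j$ over $j<k$. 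They are bounded in $\ell^r$ exactly because $\gamma-1>-n/p$ and $\gamma<n-n/p$. This is where both endpoint conditions enter. Near $k\approx\log_2\delta$, the lower-order term $\int_{C_x}|f|$ is controlled the same way.

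The nonhomogeneous statement then follows from the homogeneous one. The inhomogeneous norm is equivalent to summing over $k\ge0$ with $\widehat{R}_0$. On $B_0$ the weight is irrelevant apart from the Hardy sum, which is handled identically. Alternatively, one can use $K^{\alpha,r}_p=\dot K^{\alpha,r}_p\cap L^p$ for positive $\alpha$, together with \eqref{herz-emb1}.
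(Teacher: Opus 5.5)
The paper does not prove this theorem. It is quoted from \cite{Dr-Herz-Sobolev2}, so your argument has to stand on its own. Your dyadic cone/Riesz-potential estimate is sound, but the step that produces a fractional loss fails.

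\textbf{The gap.} Interpolating with Lemma~\ref{interpolation2} between $\dot{K}_{p}^{\gamma,r}$ and $\dot{K}_{p}^{\gamma-1,r}$ requires the full-loss bound $\dot{K}_{p,1}^{\gamma,r}(\Omega)\hookrightarrow\dot{K}_{p}^{\gamma-1,r}(\Omega)$. That bound is false once $\gamma-1\le-\frac{n}{p}$. Take $\Omega=\mathbb{R}^{n}$ and $f\in C_c^\infty(\mathbb{R}^n)$ with $f\equiv1$ on $B_{k_1}$. The right-hand norm is finite because $\gamma>-\frac{n}{p}$. The left-hand norm is at least $c\sum_{k\le k_1}2^{k(\gamma-1+\frac{n}{p})r}=\infty$.

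Every step of your iteration starts at a weight $\gamma_{j-1}\le\alpha_2$. Hence the case $0<\alpha_2-\alpha_1<1$ with $\alpha_2\le 1-\frac{n}{p}$ is reached by no splitting at all. For $n=1$ this is every nontrivial case, since the admissible range $-\frac{n}{p}<\gamma-1<\gamma<n-\frac{n}{p}$ of your one-step estimate is then empty. Even when $\alpha_2-\alpha_1\ge1$, an arbitrary splitting can fail; with interpolation you must take the fractional piece first.

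\textbf{The repair.} Drop interpolation and run your dyadic argument directly with the weight $2^{k(\gamma-s)}$, $0<s\le1$. For $2^k\lesssim\delta$:
\begin{itemize}
\item the local piece becomes $2^{k(1-s)}\,2^{k\gamma}\|\nabla f\,\chi_{\tilde R_k}\|_p$;
\item the outer sum becomes $\sum_{j>k}2^{-(j-k)(\gamma-s+\frac{n}{p})}\,2^{j(1-s)}a_j$;
\item the inner sum becomes $2^{k(1-s)}\sum_{j<k}2^{-(k-j)(\frac{n}{p'}-\gamma)}a_j$.
\end{itemize}
Here $a_j=2^{j\gamma}\|\nabla f\,\chi_j\|_p$ as in your sketch. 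Only indices with $2^j,2^k\lesssim\delta$ occur, so the factors $2^{j(1-s)}$ and $2^{k(1-s)}$ are bounded. You then need only $\gamma-s>-\frac{n}{p}$ and $\gamma<n-\frac{n}{p}$, and any splitting of $\alpha_2-\alpha_1$ into $m$ pieces in $[0,1]$ works.

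The lower-order term $\int_{C_x}|f|$ must be treated for all $2^k\ll\delta$, not only near $\log_2\delta$. It is at most $\int_{B(0,2\delta)\cap\Omega}|f|\lesssim\|f\|_{\dot{K}_{p}^{\gamma,\infty}(\Omega)}$, using $\gamma<\frac{n}{p'}$. The resulting sum $\sum_{2^k\lesssim\delta}2^{k(\gamma-s+\frac{n}{p})r}$ is finite.

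\textbf{The nonhomogeneous case.} This needs no derivatives. Since $\alpha_1\le\alpha_2$, \eqref{herz-emb1} gives $\|f\|_{K_{p}^{\alpha_1,r}(\Omega)}\le\|f\|_{K_{p}^{\alpha_2,r}(\Omega)}\le\|f\|_{K_{p,m}^{\alpha_2,r}(\Omega)}$ directly.
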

Collecting the results of Theorems \ref{embedingsq=p} and \ref%
{embeddingsfirst}, together with the interpolation inequalities %
\eqref{Interpolation} and \eqref{Interpolation1}, we obtain the following
statement.

\begin{theorem}
	\label{embeddingsfirst copy(1)} Let $\Omega \subset \mathbb{R}^{n}$ be a
	domain satisfying the cone condition, and let $m\in \mathbb{N}_{0}$. Assume
	that $1\le r<\infty, 1<p<q<p^{\ast}<\infty, m>\alpha_{2}-\alpha_{1},$
	and 
	\begin{equation*}
	\frac{n}{p^{\ast}} = \frac{n}{p}-m+\alpha_{2}-\alpha_{1} >0,
	\end{equation*}
	with 
	\begin{equation*}
	-\frac{n}{q}<\alpha_{1}\le \alpha_{2}<n-\frac{n}{p}.
	\end{equation*}
	Then 
	\begin{equation*}
	\dot{K}_{p,m}^{\alpha_{2},r}(\Omega) \hookrightarrow \dot{K}%
	_{q}^{\alpha_{1},r}(\Omega)
	\end{equation*}
	and 
	\begin{equation}
	K_{p,m}^{\alpha_{2},r}(\Omega) \hookrightarrow K_{q}^{\alpha_{1},r}(\Omega).
	\label{emb9}
	\end{equation}
\end{theorem}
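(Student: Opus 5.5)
The plan is to interpolate, via Lemma \ref{interpolation2}, between the two endpoint embeddings already available: Theorem \ref{embedingsq=p}, which keeps the integrability exponent $p$, and Theorem \ref{embeddingsfirst}, which raises it to a Sobolev exponent. Note first that $m>\alpha_2-\alpha_1\ge 0$ forces $m\ge 1$, so both theorems apply with this $m$. Since $p<q<p^{\ast}$, there is a unique $\theta\in(0,1)$ with
\begin{equation*}
\frac{1}{q}=\frac{1-\theta}{p}+\frac{\theta}{p^{\ast}} .
\end{equation*}
Take $q_0=q_1=r$, so that the third exponent in Lemma \ref{interpolation2} is again $r$. For $f\in \dot{K}_{p,m}^{\alpha_2,r}(\Omega)$, \eqref{Interpolation} then gives
\begin{equation*}
\|f\|_{\dot{K}_{q}^{\alpha_1,r}(\Omega)}\le \|f\|_{\dot{K}_{p}^{\beta_0,r}(\Omega)}^{1-\theta}\,\|f\|_{\dot{K}_{p^{\ast}}^{\beta_1,r}(\Omega)}^{\theta},
\end{equation*}
provided the weights satisfy $(1-\theta)\beta_0+\theta\beta_1=\alpha_1$. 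The nonhomogeneous case \eqref{emb9} is treated identically using \eqref{Interpolation1}.

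The naive choice is $\beta_0=\beta_1=\alpha_1$. The first factor is then bounded by $\|f\|_{\dot{K}_{p,m}^{\alpha_2,r}(\Omega)}$ by Theorem \ref{embedingsq=p}. Its hypotheses hold because $-n/p<-n/q<\alpha_1\le\alpha_2<n-n/p$ and $m\ge \alpha_2-\alpha_1$. For the second factor, Theorem \ref{embeddingsfirst} applies with target exponent $p^{\ast}$, since $n/p^{\ast}=n/p-m+\alpha_2-\alpha_1>0$ is exactly \eqref{sobolevassumption1} for that exponent, and $m>\alpha_2-\alpha_1$. Once both factors are bounded, Young's inequality, or simply the bound $a^{1-\theta}b^{\theta}\le \max(a,b)$, yields the embedding.

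The main obstacle is the lower weight condition in Theorem \ref{embeddingsfirst}. At the endpoint $p^{\ast}$ it requires $\alpha_1>-n/p^{\ast}$, whereas the statement only assumes $\alpha_1>-n/q$, which is weaker because $q<p^{\ast}$. If $\alpha_1>-n/p^{\ast}$, the argument above is complete. Otherwise I would use unequal weights $\beta_0\le \alpha_1\le\beta_1\le\alpha_2$ at the two endpoints. At the upper endpoint I take the Sobolev exponent $s$ defined by $n/s=n/p-m+\alpha_2-\beta_1$. The interpolation exponent $\theta$ is then redefined by $1/q=(1-\theta)/p+\theta/s$, with the constraint $(1-\theta)\beta_0+\theta\beta_1=\alpha_1$.

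The remaining task is to solve this two-equation system so that three conditions hold: $\beta_0>-n/p$, which Theorem \ref{embedingsq=p} needs; $\beta_1>-n/s$ and $n/s>0$, which Theorem \ref{embeddingsfirst} needs; and $m\ge \alpha_2-\beta_0$ and $m>\alpha_2-\beta_1$, which follow from $\beta_0\le\alpha_1\le\beta_1$ together with $m>\alpha_2-\alpha_1$. A short computation shows that the combination $\alpha+n/\text{exponent}$ interpolates linearly. Indeed, $(1-\theta)(\beta_0+n/p)+\theta(\beta_1+n/s)=\alpha_1+n/q>0$. So one can raise $\beta_1$ until $\beta_1+n/s>0$ while keeping $\beta_0+n/p>0$, and I expect this to be the crux to verify carefully. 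As a fallback, one can reduce the target by iterating the interpolation with an intermediate exponent.
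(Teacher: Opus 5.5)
Your equal-weight argument is exactly the paper's proof. The paper only says that Theorems \ref{embedingsq=p} and \ref{embeddingsfirst} are combined via \eqref{Interpolation} and \eqref{Interpolation1}, and it is silent about the weight condition at the endpoint $p^{\ast}$. You are right that this argument needs $\alpha_1>-n/p^{\ast}$. That is automatic when $\alpha_1\ge 0$, which covers every later use of the theorem in Section 4, but it is not implied by $\alpha_1>-n/q$.

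The gap is in your repair. Since $n/s=n/p-m+\alpha_2-\beta_1$, you have identically
\[
\beta_1+\frac{n}{s}=\frac{n}{p}-m+\alpha_2=\alpha_1+\frac{n}{p^{\ast}} .
\]
So the quantity you want to make positive does not depend on $\beta_1$ at all. For every choice of $\beta_1$, the hypothesis $-n/s<\beta_1$ of Theorem \ref{embeddingsfirst} is equivalent to $m<n/p+\alpha_2$, i.e.\ again to $\alpha_1>-n/p^{\ast}$. Your linear identity $(1-\theta)(\beta_0+n/p)+\theta(\beta_1+n/s)=\alpha_1+n/q$ is correct, but it only forces $\beta_0+n/p$ to compensate. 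It never makes the upper endpoint admissible, and $\beta_0$ enters only the $L^{p}$ endpoint.

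This range is not empty. Take $n=3$, $p=2$, $m=1$, $\alpha_1=\alpha_2=-\frac45$, $q=3$. Then $p^{\ast}=6$, $-n/q=-1<\alpha_1$, $m>\alpha_2-\alpha_1=0$, and every hypothesis of the statement holds. However, $n/p-m+\alpha_2=-\frac{3}{10}<0$. Since $m=1$, you cannot pass to a smaller order $m'\in\mathbb{N}$ either. Hence Theorem \ref{embeddingsfirst} cannot be applied from $\dot K^{-4/5,r}_{2,1}(\Omega)$ for any target weight. Theorem \ref{embedingsq=p} only yields targets with integrability exponent $p=2$.

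Lemma \ref{interpolation2} acts affinely on the pair (weight, $1/\text{exponent}$). So interpolation, or iterated interpolation as in your fallback, stays in the convex hull of the available endpoints and cannot reach exponent $3$. In the range $\alpha_1\le -n/p^{\ast}$ (equivalently $m\ge n/p+\alpha_2$), you need a genuinely new ingredient. One example is a weighted Gagliardo--Nirenberg (Caffarelli--Kohn--Nirenberg type) estimate controlling the annuli near the origin directly. Without it, what your argument (and the paper's) actually proves is the statement under the extra hypothesis $-n/p^{\ast}<\alpha_1$.
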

Next, we consider the case $q=\infty$.

\begin{theorem}
	\label{embeddingsq=infinity} Let $\Omega \subset \mathbb{R}^{n}$ be a domain
	satisfying the cone condition, and let $m\in \mathbb{N}$. Assume that 
	$1<p<\infty,  1\le r<\infty,0\le \alpha_{1}\le \alpha_{2}<n-\frac{n}{p}$
	and 
	\begin{equation*}
	m>\frac{n}{p}+\alpha_{2}-\alpha_{1}.
	\end{equation*}
	Then 
	\begin{equation}
	\dot{K}_{p,m}^{\alpha_{2},r}(\Omega) \hookrightarrow \dot{K}%
	_{\infty}^{\alpha_{1},v}(\Omega),  \label{q=infinity}
	\end{equation}
	and 
	\begin{equation*}
	K_{p,m}^{\alpha_{2},r}(\Omega) \hookrightarrow
	K_{\infty}^{\alpha_{1},v}(\Omega),
	\end{equation*}
	where 
	\begin{equation*}
	v= \left\{ 
	\begin{array}{ll}
	\infty, & \text{if } \alpha_{1}=0, \\[1mm] 
	r, & \text{if } \alpha_{1}>0.%
	\end{array}
	\right.
	\end{equation*}
\end{theorem}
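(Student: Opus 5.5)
We prove the embedding annulus by annulus. The key tool is the classical local Sobolev estimate on cones: if $m>n/p$ and $\Omega$ satisfies the cone condition with cone $C$, then for every $x\in\Omega$
\begin{equation*}
|f(x)|\le c\sum_{|\beta|\le m}\|D^{\beta}f\|_{L^{p}(C_{x})},
\end{equation*}
which follows from the standard representation of $f$ on the cone $C_x$ (Adams--Fournier, Lemma 4.15). We first take $f\in C^{\infty}(\Omega)\cap\dot K^{\alpha_2,r}_{p,m}(\Omega)$, which is dense by Theorem \ref{density}. The general case then follows by approximation and a.e. convergence along a subsequence.

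\emph{Step 1 (points away from the origin).} Fix $x\in R_k\cap\Omega$. The cone $C_x$ has height $\delta$. We split into two cases.
\begin{itemize}
\item If $2^{k}$ is large compared with $\delta$, then $C_x$ meets only the annuli $R_{k-1},R_k,R_{k+1}$. On these annuli $|y|\approx 2^k$, so
\begin{equation*}
|f(x)|\lesssim 2^{-k\alpha_2}\sum_{|i|\le1}2^{(k+i)\alpha_2}\Bigl(\sum_{|\beta|\le m}\|D^\beta f\chi_{R_{k+i}\cap\Omega}\|_p^p\Bigr)^{1/p}.
\end{equation*}
Multiplying by $2^{k\alpha_1}$ leaves the factor $2^{-k(\alpha_2-\alpha_1)}\le 1$, since $\alpha_1\le\alpha_2$ and $k\ge0$.
\item If $2^{k}\lesssim\delta$, we do not use the full cone. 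Instead we use the truncated cone of height $\approx 2^{k}$, which is still contained in $C_x$. The scaled Sobolev estimate on a cone of height $h$ is
\begin{equation*}
|f(x)|\lesssim\sum_{|\beta|\le m}h^{|\beta|-n/p}\|D^\beta f\|_{L^p(\text{cone})}.
\end{equation*}
It is valid for $h\le\delta$ with constants depending only on $C$, by dilation.
\end{itemize}

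\emph{Step 2 (the small scales).} We take $h=\min(\delta,2^{k})$, and choose the cone to stay inside $\bigcup_{|i|\le 2}R_{k+i}$; this is possible up to a harmless constant. Then
\begin{equation*}
2^{k\alpha_1}\|f\chi_{R_k}\|_\infty\lesssim\sum_{|i|\le2}\ \sum_{|\beta|\le m}2^{k(\alpha_1-\alpha_2)}\,2^{k(|\beta|-n/p)}\,2^{(k+i)\alpha_2}\|D^\beta f\chi_{R_{k+i}\cap\Omega}\|_p .
\end{equation*}
For $k\le 0$, the worst term is $\beta=0$, which carries the factor $2^{k(\alpha_1-\alpha_2-n/p)}$. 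This blows up as $k\to-\infty$. This is the main obstacle: the lowest-order term cannot be controlled in the homogeneous space.

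To handle it, we do not take $h=2^{k}$ for all terms. Instead we use $h=\delta$ near the origin, with the full cone, which can cross many annuli. We then estimate
\begin{equation*}
\|D^\beta f\|_{L^p(C_x)}\le \sum_{j\le j_0}\|D^\beta f\chi_{R_j}\|_p ,
\end{equation*}
bounded by $\sum_j 2^{-j\alpha_2}\,a_j$ with $a_j=2^{j\alpha_2}\|D^\beta f\chi_{R_j}\|_p$. For $\alpha_1>0$ the weight $2^{k\alpha_1}$ then gives
\begin{equation*}
\sum_k 2^{k\alpha_1 r}\Bigl(\sum_{j\le j_0}2^{-j\alpha_2}a_j\Bigr)^r .
\end{equation*}
This is a Hardy-type sum over $k\le0$, and it converges in $\ell^r$ because $\alpha_1>0$. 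Here the hypothesis $\alpha_2<n-n/p$ is used: it makes $\sum_{j} 2^{-j\alpha_2}a_j$ near the origin finite by Hölder's inequality, since $\|f\chi_{B_0}\|_{L^1}$-type control holds exactly in the range $V_{\alpha,p,q}$. For $\alpha_1=0$ we only need a uniform bound in $k$, which gives $v=\infty$.

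The hypothesis $m>n/p+\alpha_2-\alpha_1$ enters in balancing the two regimes. It is used for intermediate $k$, where we interpolate between the cone estimate of height $2^{k}$ applied to the top-order derivatives and the full-cone estimate. For the top-order terms the factor $2^{k(m-n/p-\alpha_2+\alpha_1)}$ is summable for $k\le0$. The lower-order terms are absorbed via Theorem \ref{embedingsq=p}, which converts $\dot K^{\alpha_2,r}_{p,m}$ into $\dot K^{\alpha_1',r}_{p}$ for the weights needed.

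\emph{Step 3 (assembling the norm).} Taking $\ell^{r}$ norms in $k$ (or the supremum when $\alpha_1=0$) and using Young's inequality for the convolution-type sums gives \eqref{q=infinity}. The nonhomogeneous case is identical, with $\widehat R_0=B_0$ treated by the full-cone estimate alone.
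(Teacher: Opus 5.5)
The paper does not prove this theorem; it quotes it from \cite{Dr-Herz-Sobolev2}. Judged on its own, your argument has a genuine gap at the step that matters: points near the origin in the homogeneous space.

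\textbf{The gap.} Your full-cone estimate needs $D^{\beta}f\in L^{p}(C_x)$ for all $|\beta|\le m$. But $f\in\dot K^{\alpha_2,r}_{p,m}(\Omega)$ only gives $\|D^{\beta}f\chi_{R_j\cap\Omega}\|_p=2^{-j\alpha_2}a_j$ with $(a_j)\in\ell^r$. The sum $\sum_{j\le j_0}2^{-j\alpha_2}a_j$ is in general infinite when $\alpha_2>0$: take $a_j=2^{j\alpha_2}$ for $j\le 0$, i.e.\ $|D^\beta f|\approx|x|^{-n/p}$ near $0$. For $\alpha_2=0$ and $r>p$, the local $L^p$ norm can also be infinite.

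Your appeal to $V_{\alpha,p,q}$ confuses $L^1$ with $L^p$. H\"older gives $\|g\chi_{R_j}\|_1\lesssim 2^{jn/p'}\|g\chi_{R_j}\|_p$, hence the convergent sum $\sum_j 2^{j(n-n/p-\alpha_2)}a_j$. That is an $L^1$ bound, and it is useless in a $W^{m,p}$ cone estimate.

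Choosing cones that point away from $0$ does not help either. The sum then stops at $j\approx k$. However, on a cone of fixed height $\delta$ the top-order term carries no factor $h^{m-n/p}$, so it contributes $2^{k(\alpha_1-\alpha_2)}a_k$. This is unbounded as $k\to-\infty$ whenever $\alpha_1<\alpha_2$.

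So neither the case $\alpha_1=0$ (which needs genuine boundedness of $f$ near $0$) nor your ``Hardy-type sum'' for $\alpha_1>0$ is established. The remarks about interpolating between regimes and Young's inequality do not supply the missing estimate. Your full-cone argument is fine for the nonhomogeneous space, where $\|D^\beta f\chi_{B_0}\|_p$ is part of the norm.

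\textbf{The repair} splits according to $\alpha_1$.

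\emph{Case $\alpha_1>0$.} Your last paragraph already works by itself, with no full cone. For $x\in R_k\cap\Omega$ with $2^{k-2}\le\delta$, use the truncated cone of height $2^{k-2}$, which lies in $R_{k-1}\cup R_k\cup R_{k+1}$, for every $|\beta|\le m$. For $|\beta|<m$, apply Theorem~\ref{embedingsq=p} to $D^{\beta}f\in\dot K^{\alpha_2,r}_{p,m-|\beta|}(\Omega)$ with a weight $\alpha'_\beta\in[\alpha_2-m+|\beta|,\,\alpha_1+|\beta|-\frac np]\cap(-\frac np,\alpha_2]$. This set is nonempty because $m>\frac np+\alpha_2-\alpha_1$ and $\alpha_1>0$. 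It makes every factor $2^{k(\alpha_1+|\beta|-n/p-\alpha'_\beta)}$ bounded for such $k$, so the $\ell^r$ bound follows term by term.

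\emph{Case $\alpha_1=0$.} The argument above fails exactly at $\beta=0$: you would need $\alpha'_0\le-\frac np$, which Theorem~\ref{embedingsq=p} excludes. A different device is needed. One option is the cone estimate in $L^s$ with $\frac np+\alpha_2<\frac ns<\min(m,n)$. This choice of $s$ is possible precisely because $m>\frac np+\alpha_2$ and $\alpha_2<n-\frac np$. Combine it with $\|D^{\beta}f\chi_{R_j\cap\Omega}\|_s\lesssim 2^{j(n/s-n/p-\alpha_2)}a_j$. The sum of these over $j\le j_0$ converges, which yields $\sup_{|x|\le 1}|f(x)|\lesssim\|f\|_{\dot K^{\alpha_2,r}_{p,m}(\Omega)}$.
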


We now study the  case 
\begin{equation*}
m\ge \frac{n}{p}+\alpha_{2}-\alpha_{1}
\end{equation*}
in Theorem \ref{embeddingsfirst copy(1)}.

\begin{theorem}
	\label{embedingsp<q} Let $\Omega \subset \mathbb{R}^{n}$ be a domain
	satisfying the cone condition, and let $m\in \mathbb{N}$. Assume that 
	$1<p<q<\infty, 1\le r<\infty,$
	$-\frac{n}{q}<\alpha_{1}\le \alpha_{2}<n-\frac{n}{p},$
	and 
	\begin{equation*}
	m\ge \frac{n}{p}+\alpha_{2}-\alpha_{1}.
	\end{equation*}
	Then 
	\begin{equation}
	\dot{K}_{p,m}^{\alpha_{2},r}(\Omega) \hookrightarrow \dot{K}%
	_{q}^{\alpha_{1},r}(\Omega),  \label{emb10}
	\end{equation}
	and 
	\begin{equation*}
	K_{p,m}^{\alpha_{2},r}(\Omega) \hookrightarrow K_{q}^{\alpha_{1},r}(\Omega)
	\end{equation*}
	hold.
\end{theorem}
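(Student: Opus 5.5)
The idea is to reduce Theorem \ref{embedingsp<q} to Theorem \ref{embeddingsfirst copy(1)} by lowering the smoothness to some $m_0\le m$. Since the full condition $m\ge \frac{n}{p}+\alpha_2-\alpha_1$ places us in the borderline or supercritical regime, we choose $m_0$ strictly subcritical with target exponent above $q$. Note first that $\dot{K}_{p,m}^{\alpha_2,r}(\Omega)\hookrightarrow \dot{K}_{p,m_0}^{\alpha_2,r}(\Omega)$ trivially for $m_0\le m$, directly from the definition of the norms; the same holds in the nonhomogeneous case.

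\textbf{Step 1 (choice of $m_0$).} If $\alpha_2-\alpha_1<\frac{n}{p}-\frac{n}{q}$, take $m_0=0$. The condition $m_0>\alpha_2-\alpha_1$ then fails unless $\alpha_1=\alpha_2$, so this case needs separate handling (see Step 3). Otherwise, pick the largest integer $m_0$ with
\[
\frac{n}{p^\ast}=\frac{n}{p}-m_0+\alpha_2-\alpha_1>0 \quad\text{and}\quad p^\ast>q ,
\]
that is, $m_0<\frac{n}{p}-\frac{n}{q}+\alpha_2-\alpha_1$. The integrality of $m_0$ is the nuisance. To avoid it, I would instead interpolate with the fractional flexibility provided by Lemma \ref{interpolation2} in the $\alpha$-parameter rather than in $m$. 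Keep $m_0=m-1$ (or any integer with $\alpha_2-\alpha_1<m_0$) and adjust an auxiliary weight $\tilde\alpha_1\le\alpha_1$, allowed by \eqref{herz-emb1} only in the nonhomogeneous case, so that the exponent $p^\ast$ defined by $\frac{n}{p^\ast}=\frac{n}{p}-m_0+\alpha_2-\tilde\alpha_1$ lies in $(q,\infty)$.

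\textbf{Step 2 (interpolation).} Write $\frac1q=\frac{1-\theta}{p}+\frac{\theta}{p^\ast}$ and choose the weights $\beta_0,\beta_1$ with $\alpha_1=(1-\theta)\beta_0+\theta\beta_1$. Apply Theorem \ref{embedingsq=p} to get $\dot K^{\alpha_2,r}_{p,m}\hookrightarrow \dot K^{\beta_0,r}_p$ (needs $\beta_0\ge \alpha_2-m$ and $\beta_0>-\frac np$). Apply Theorem \ref{embeddingsfirst} to get $\dot K^{\alpha_2,r}_{p,m_0}\hookrightarrow\dot K^{\beta_1,r}_{p^\ast}$ with $\frac{n}{p^\ast}=\frac np-m_0-\beta_1+\alpha_2$. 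Then \eqref{Interpolation} with $q_0=q_1=r$ yields the claim. The bookkeeping consists of checking that the constraints
\[
\beta_1>-\tfrac{n}{p^\ast},\quad \beta_1\le\alpha_2,\quad \beta_0\le \alpha_2,\quad m_0>\alpha_2-\beta_1
\]
can be met simultaneously, given $-\frac nq<\alpha_1\le\alpha_2<n-\frac np$ and $m\ge\frac np+\alpha_2-\alpha_1$. Since $\beta_1$ and $p^\ast$ are linked and $\theta$ is free, I expect a one-parameter family of admissible choices; the hypothesis $m\ge \frac np+\alpha_2-\alpha_1$ is exactly what allows $p^\ast>q$ with $\beta_1$ close to $\alpha_1$.

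\textbf{Step 3 (main obstacle).} The main difficulty is the parameter verification in Step 2, in particular the lower bound $\beta_1>-\frac n{p^\ast}$ and the requirement $m_0\ge1$ when $m=1$. For $m=1$ we must have $\frac np+\alpha_2-\alpha_1\le 1$, and Theorem \ref{embeddingsfirst} with $m_0=1$ requires $\frac n{p^\ast}>0$, which may fail at equality. In that case I would first try lowering $p$ slightly: use that on each annulus $R_k\cap\Omega$, Hölder's inequality gives $\dot K^{\alpha_2,r}_{p,1}\hookrightarrow \dot K^{\alpha_2+n/p-n/\tilde p,r}_{\tilde p,1}$ for $\tilde p<p$. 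This yields a strictly subcritical situation, to which Theorem \ref{embeddingsfirst copy(1)} applies directly. The nonhomogeneous statement follows verbatim using \eqref{Interpolation1} and the nonhomogeneous parts of the cited theorems.
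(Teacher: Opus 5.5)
The paper only recalls this theorem from \cite{Dr-Herz-Sobolev2} and gives no proof of it, so your argument has to stand on its own. It does not: the interpolation scheme of Step 2 cannot reach $(q,\alpha_1)$ in a large part of the admissible range.

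Theorems \ref{embedingsq=p} and \ref{embeddingsfirst} only produce target weights $\beta_0,\beta_1\le\alpha_2$. Since $\frac{n}{p^\ast}+\beta_1=\frac np+\alpha_2-m_0$, the requirement $\beta_1>-\frac{n}{p^\ast}$ is equivalent to $m_0<\frac np+\alpha_2$, whatever $\beta_1$ is. With $m_0\in\mathbb{N}$ this is impossible as soon as $\frac np+\alpha_2\le 1$.

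Now take $\alpha_1=\alpha_2$, e.g. the unweighted case. The relation $\alpha_1=(1-\theta)\beta_0+\theta\beta_1$ with $0<\theta<1$ forces $\beta_0=\beta_1=\alpha_2$. Hence $\frac{n}{p^\ast}=\frac np-m_0$, and $q<p^\ast<\infty$ needs an integer $m_0\ge 1$ in $(\frac np-\frac nq,\frac np)$. For large $q$ there is no such integer. For instance, $n=3$, $p=2$, $m=2$, $\alpha_1=\alpha_2=0$ only yields $q<6$. In the critical case $m=\frac np$ there is none for any $q\ge n$.

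Step 1 also has two sign errors. First, $p^\ast>q$ means $m_0>\frac np-\frac nq+\alpha_2-\alpha_1$, not $<$. Second, \eqref{herz-emb1} passes from $K_q^{\tilde\alpha_1,r}$ to $K_q^{\alpha_1,r}$ only when $\tilde\alpha_1\ge\alpha_1$, not $\tilde\alpha_1\le\alpha_1$.

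Step 3 does not repair this. H\"older on $R_k$, where $|R_k|\sim 2^{kn}$, gives exactly $\dot{K}^{\alpha_2,r}_{p,m}(\Omega)\hookrightarrow\dot{K}^{\tilde\alpha_2,r}_{\tilde p,m}(\Omega)$ with $\frac{n}{\tilde p}+\tilde\alpha_2=\frac np+\alpha_2$. So the quantity that decides subcriticality, $\frac{n}{\tilde p}+\tilde\alpha_2-m-\alpha_1=\frac np+\alpha_2-m-\alpha_1\le 0$, is unchanged. Moreover $\tilde\alpha_2<\alpha_2$, which breaks the hypothesis $\alpha_1\le\tilde\alpha_2$ when $\alpha_1=\alpha_2$. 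Lowering $p$ gains something only on sets of bounded size; dyadic annuli are scale invariant.

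Using Theorem \ref{embeddingsq=infinity} as the second endpoint covers part of the range, namely whenever some $\beta_1\in(0,\alpha_2]$ with $m>\frac np+\alpha_2-\beta_1$ exists. It still misses three cases:
\begin{itemize}
\item $\alpha_1=\alpha_2=0$: here $v=\infty$, and \eqref{Interpolation} only gives $\dot{K}_q^{0,rq/p}$, not $\dot{K}_q^{0,r}$;
\item $\alpha_2<0$: that theorem does not apply at all;
\item the critical case $\alpha_1=\alpha_2$, $m=\frac np$.
\end{itemize}

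What is missing is a direct estimate at a fixed length scale, as in the classical proof for $mp\ge n$. One uses the integral representation over cones $C_x$ of fixed height $\delta$. It bounds $|f(x)|$ by $\|D^\beta f\|_{L^1(C_x)}$ for $|\beta|\le m-1$, plus $\int_{C_x}|D^\beta f(y)|\,|x-y|^{m-n}\,dy$ for $|\beta|=m$. This must be combined with a Young-type inequality in Herz spaces for the truncated kernel $|\cdot|^{m-n}\chi_{B(0,\delta)}$. That kernel lies in $L^t$ for every $t<\frac{n}{n-m}$ (when $m<n$), so on the large annuli it reaches every $q<\infty$. The hypotheses $-\frac nq<\alpha_1\le\alpha_2<n-\frac np$ and $m\ge\frac np+\alpha_2-\alpha_1$ are then what must control the annuli near the origin.
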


\section{Pointwise Multiplication and the Rellich--Kondrachov Theorem}

A Banach space $A$ is called a \emph{Banach algebra} if it is equipped with
an associative multiplication satisfying 
\begin{equation*}
\|fg\|_{A} \lesssim \|f\|_{A}\,\|g\|_{A}
\end{equation*}
for all $f,g\in A$.

The following theorem shows that the Herz-type Sobolev  spaces form Banach algebras with respect to pointwise multiplication.

\begin{theorem}
	Let $\Omega \subset \mathbb{R}^{n}$ be a domain satisfying the cone
	condition, and let $m\in\mathbb{N}_{0}$. Assume that 
	$1<p<\infty$,$ 1\leq r<\infty,$
	$0\leq \alpha < n-\frac{n}{p}$ and $ m>\frac{n}{p}+\alpha.$
	Then  both 
	$
	K_{p,m}^{\alpha,r}(\Omega)
	$ and $
	\dot{K}_{p,m}^{\alpha,r}(\Omega)
	$ 
	are  Banach algebras with respect to pointwise multiplication.
\end{theorem}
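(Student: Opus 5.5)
The plan is to mimic the classical proof that $W_{p}^{m}(\Omega)$ is an algebra when $mp>n$ (Adams--Fournier). The roles of $L^{\infty}$ and $L^{q}$ there will be played by the Herz embeddings of Section 3.

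\textbf{Leibniz expansion.} By Theorem \ref{density} and a routine approximation/closedness argument, it suffices to treat $f,g\in C^{\infty}(\Omega)\cap K_{p,m}^{\alpha,r}(\Omega)$. For these the Leibniz rule gives
\begin{equation*}
D^{\beta}(fg)=\sum_{\gamma\le\beta}\binom{\beta}{\gamma}D^{\gamma}f\,D^{\beta-\gamma}g,\qquad |\beta|\le m .
\end{equation*}
On each annulus $\widehat R_{k}\cap\Omega$ (resp.\ $R_{k}\cap\Omega$), Hölder's inequality gives
\begin{equation*}
\|D^{\gamma}f\,D^{\beta-\gamma}g\,\chi_{k}\|_{p}\le \|D^{\gamma}f\chi_{k}\|_{p_{1}}\|D^{\beta-\gamma}g\chi_{k}\|_{p_{2}},\qquad \frac1{p_{1}}+\frac1{p_{2}}=\frac1p .
\end{equation*}
Multiplying by $2^{k\alpha}$ and taking the $\ell^{r}$ norm, I would split $2^{k\alpha}=2^{k\alpha}\cdot 2^{k\cdot0}$. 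Then
\begin{equation*}
\|uv\|_{K_{p}^{\alpha,r}}\le \|u\|_{K_{p_{1}}^{\alpha,r}}\sup_{k}\|v\chi_{k}\|_{p_{2}}\le\|u\|_{K_{p_{1}}^{\alpha,r}}\|v\|_{K_{p_{2}}^{0,\infty}},
\end{equation*}
with $u$ and $v$ chosen in whichever order is convenient.

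\textbf{Extreme terms.} For $\gamma=0$ (or $\gamma=\beta$), take $p_{1}=p$, $p_{2}=\infty$. Then $D^{\beta}g\in K_{p}^{\alpha,r}$ directly. Theorem \ref{embeddingsq=infinity} with $\alpha_{1}=0$, $\alpha_{2}=\alpha$ gives $f\in K_{\infty}^{0,\infty}$ with norm $\lesssim\|f\|_{K_{p,m}^{\alpha,r}}$, since $m>n/p+\alpha$ is exactly the hypothesis there. Note that $K_{\infty}^{0,\infty}=L^{\infty}$.

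\textbf{Intermediate terms.} For $0<|\gamma|=j<|\beta|\le m$, apply the embedding theorems to $D^{\gamma}f\in K_{p,m-j}^{\alpha,r}$ and $D^{\beta-\gamma}g\in K_{p,m-|\beta|+j}^{\alpha,r}$ (and these derivatives also lie in $K_{p,m-j}^{\alpha,r}$). The exponents are chosen as in the classical case. If $m-j>n/p+\alpha$, put $D^{\gamma}f\in L^{\infty}$ via Theorem \ref{embeddingsq=infinity}. Otherwise take $p_{1}<\infty$ with $\frac{n}{p_{1}}>\frac np-(m-j)$, or use Theorem \ref{embedingsp<q} in the equality case; Theorems \ref{embeddingsfirst}, \ref{embeddingsfirst copy(1)}, \ref{embedingsp<q} (with $\alpha_{1}=\alpha_{2}=\alpha$, resp.\ $\alpha_{1}=0$) then give $D^{\gamma}f\in K_{p_{1}}^{\alpha,r}$. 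The second factor is put in $K_{p_{2}}^{0,\infty}$ using $\alpha_{1}=0\le\alpha=\alpha_{2}$ together with \eqref{herz-emb}. The constraint $\frac1{p_{1}}+\frac1{p_{2}}=\frac1p$ becomes
\begin{equation*}
\frac{n}{p_{1}}+\frac{n}{p_{2}}=\frac np ,
\end{equation*}
and it is solvable because
\begin{equation*}
\Bigl(\frac np-(m-j)\Bigr)_{+}+\Bigl(\frac np-(m-|\beta|+j)+\alpha\Bigr)_{+}<\frac np,
\end{equation*}
which follows from $m>n/p+\alpha$. Strict inequalities leave room to take $q<p^{\ast}$, so Theorem \ref{embeddingsfirst copy(1)} applies, and the side conditions $-n/q<\alpha_{1}$ are automatic for $\alpha_{1}\ge0$.

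\textbf{Main obstacle.} This exponent bookkeeping is the hard step. One must verify that every intermediate pair admits admissible $p_{1},p_{2}$ satisfying all hypotheses of the quoted embeddings, especially $q<p^{\ast}$ and the weight shift $\alpha_{2}-\alpha_{1}\in\{0,\alpha\}$, which costs $\alpha$ in the second factor. I would place the whole weight on the factor carrying fewer derivatives.

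\textbf{Homogeneous case.} The argument for $\dot K_{p,m}^{\alpha,r}$ is identical, sum over $k\in\mathbb Z$, using the homogeneous versions of the same embeddings.

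Summing over $\gamma$ and $\beta$ yields $\|fg\|\lesssim\|f\|\,\|g\|$. Associativity is obvious, and the spaces are already known to be Banach spaces, so both are Banach algebras.
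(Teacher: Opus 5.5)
\textbf{There is a genuine gap in the intermediate terms.} You always put the full weight $\alpha$ on one factor and weight $0$ (with $\ell^{\infty}$ in $k$) on the other. You then assert that the resulting condition
\[
\Bigl(\frac np-(m-j)\Bigr)_{+}+\Bigl(\frac np-(m-|\beta|+j)+\alpha\Bigr)_{+}<\frac np
\]
follows from $m>\frac np+\alpha$. It does not. When the first bracket vanishes, the condition reduces to $m-|\beta|+j>\alpha$, i.e.\ the factor sent to weight $0$ must carry more than $\alpha$ spare derivatives, and nothing guarantees that.

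For $\dot K$ this is not an artefact of the quoted theorems (which assume $m>\alpha_2-\alpha_1$, resp.\ $m\ge\alpha_2-\alpha_1$) but a real obstruction. Let $u=|x|^{-s}\psi$ with $\psi$ a cut-off equal to $1$ near $0$. Then $u\in\dot K^{\alpha,r}_{p,b}(\mathbb R^{n})$ whenever $0<s<\frac np+\alpha-b$, whereas $u\in\dot K^{0,\infty}_{p_2}(\mathbb R^{n})$ forces $s\le\frac n{p_2}\le\frac np$.

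Concretely, take $\Omega=\mathbb R^{2}$, $p=4$, $\alpha=\frac65$, $m=2$ (all hypotheses hold) and the term $\partial_1f\,\partial_2g$. Both factors are only known to lie in $\dot K^{6/5,r}_{4,1}$, and your left-hand side equals $\frac7{10}>\frac12$ for either ordering. Taking $s=\frac35$ shows $\dot K^{6/5,r}_{4,1}\not\hookrightarrow\dot K^{0,\infty}_{p_2}$ for every $p_2\ge4$. So the homogeneous case is not ``identical'', and your scheme cannot be completed there.

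\textbf{The missing idea is to split the weight between the two factors.} The paper does this in Case 2.2.2, and for the endpoint $|\beta-\gamma|=m-\frac np$ it uses $\alpha=\alpha_1+\alpha_2$ or $\alpha=\frac np+(\alpha-\frac np)$ in Cases 2.2.3--2.2.4. Write $2^{k\alpha}=2^{k(\alpha-x)}2^{kx}$ and bound the term by $\|D^{\gamma}f\|_{\dot K^{\alpha-x,\infty}_{p_1}}\|D^{\beta-\gamma}g\|_{\dot K^{x,r}_{p_2}}$. Put $a=m-j$, $b=m-|\beta|+j$, and (for $\alpha>0$) choose $x\in(\max(0,\alpha-b),\min(\alpha,a))$. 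Your condition then becomes $(\frac np-a+x)_{+}+(\frac np-b+\alpha-x)_{+}<\frac np$, which does follow from $a+b\ge m>\frac np+\alpha$. In the example above, the paper's choice $p_1=p$, $p_2=\infty$, $x=\theta\alpha$ with $\frac7{12}<\theta<\frac56$ works via Theorems \ref{embedingsq=p} and \ref{embeddingsq=infinity}.

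For the nonhomogeneous space your argument is easily repaired without splitting. Since $\alpha\ge0$, \eqref{herz-emb1} removes the weight for free, so you can embed both factors with $\alpha_1=\alpha_2=\alpha$. The condition then becomes $(\frac np-a)_{+}+(\frac np-b)_{+}<\frac np$, which holds.
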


\begin{proof}
	By similarity, it suffices to consider only the spaces $\dot{K}_{p,m}^{\alpha ,r}(\Omega )$.
	Let $f,g\in \dot{K}_{p,m}^{\alpha ,r}(\Omega )$. We aim to prove that 
	\begin{equation}
	\|fg\|_{\dot{K}_{p,m}^{\alpha ,r}(\Omega )} \lesssim \|f\|_{\dot{K}%
		_{p,m}^{\alpha ,r}(\Omega )} \|g\|_{\dot{K}_{p,m}^{\alpha ,r}(\Omega )}.\label{main-estimate1}
	\end{equation} 
	
	\textbf{Step 1.} Assume first that $f\in C^{\infty}(\Omega)$. By the Leibniz
	rule, 
	\begin{equation*}
	D^{\beta}(fg) = \sum_{\lambda \leq \beta} C_{\lambda}^{\beta} D^{\lambda}f\,
	D^{\beta-\lambda}g .
	\end{equation*}
	Therefore, it suffices to show that for every multi-index $\lambda \leq
	\beta $ with $|\beta|\leq m$, 
	\begin{equation}
	\|D^{\lambda}f\, D^{\beta-\lambda}g\|_{\dot{K}_{p}^{\alpha ,r}(\Omega )}
	\lesssim \|f\|_{\dot{K}_{p,m}^{\alpha ,r}(\Omega )} \|g\|_{\dot{K}%
		_{p,m}^{\alpha ,r}(\Omega )}.  \label{main-estimate}
	\end{equation}
	
	Set 
	\begin{equation*}
	k= 
	\begin{cases}
	m-\dfrac{n}{p}-\alpha -1, & \text{if } m-\dfrac{n}{p}-\alpha \in \mathbb{N},
	\\[1ex] 
	\left\lfloor m-\dfrac{n}{p}-\alpha \right\rfloor, & \text{if } m-\dfrac{n}{p}%
	-\alpha \notin \mathbb{N}.%
	\end{cases}%
	\end{equation*}
	
	\textbf{Substep 1.1.} We first prove \eqref{main-estimate} in the cases $%
	|\lambda|\leq k$ and $|\lambda|=m$. By Theorem~\ref{embeddingsq=infinity}, 
	\begin{equation}
	\dot{K}_{p,s}^{\alpha ,r}(\Omega ) \hookrightarrow L^{\infty}(\Omega )
	\label{emb1}
	\end{equation}
	for every integer $s>\dfrac{n}{p}+\alpha$.
	
	\textbf{Case 1.} $|\lambda|\leq k$. Clearly, 
	\begin{align*}
	\|D^{\lambda}f\, D^{\beta-\lambda}g\|_{\dot{K}_{p}^{\alpha,r}(\Omega)}
	&\lesssim \|D^{\lambda}f\|_{L^{\infty}(\Omega)} \|D^{\beta-\lambda}g\|_{\dot{%
			K}_{p}^{\alpha,r}(\Omega)} \\
	&\lesssim \|D^{\lambda}f\|_{L^{\infty}(\Omega)} \|g\|_{\dot{K}%
		_{p,m}^{\alpha,r}(\Omega)} .
	\end{align*}
	
	Since $|\lambda|\leq k$, we have 
	\begin{equation*}
	m-|\lambda|>\frac{n}{p}+\alpha .
	\end{equation*}
	Hence, by the embedding \eqref{emb1}, 
	\begin{equation*}
	\|D^{\lambda}f\|_{L^{\infty}(\Omega)} \lesssim \|D^{\lambda}f\|_{\dot{K}%
		_{p,m-|\lambda|}^{\alpha,r}(\Omega)} .
	\end{equation*}
	Moreover, 
	\begin{equation*}
	\|D^{\lambda}f\|_{\dot{K}_{p,m-|\lambda|}^{\alpha,r}(\Omega)} \leq \|f\|_{%
		\dot{K}_{p,m}^{\alpha,r}(\Omega)} .
	\end{equation*}
	Combining the above estimates yields \eqref{main-estimate} whenever $%
	|\lambda|\leq k$.
	
	\medskip
	
	\textbf{Case 2.} $|\lambda|=m$. Since $m>\frac{n}{p}+\alpha$, it follows
	from \eqref{emb1} that 
	\begin{align*}
	\|D^{\lambda}f\, g\|_{\dot{K}_{p}^{\alpha,r}(\Omega)} &\lesssim
	\|D^{\lambda}f\|_{\dot{K}_{p}^{\alpha,r}(\Omega)} \|g\|_{L^{\infty}(\Omega)}
	\\
	&\lesssim \|f\|_{\dot{K}_{p,m}^{\alpha,r}(\Omega)} \|g\|_{\dot{K}%
		_{p,m}^{\alpha,r}(\Omega)} .
	\end{align*}
	Therefore, \eqref{main-estimate} also holds whenever $|\lambda|=m$.
	
	\medskip
	
	\textbf{Substep 1.2.} We next prove \eqref{main-estimate} in the case 
	\begin{equation*}
	k<|\lambda|<m .
	\end{equation*}
	We distinguish between the two cases $|\beta-\lambda|\leq k$ and $%
	|\beta-\lambda|>k$.
	
	\medskip
	
	\textbf{Case 1.} $|\beta-\lambda|\leq k$. In this situation, 
	\begin{equation*}
	m-|\beta-\lambda|>\frac{n}{p}+\alpha .
	\end{equation*}
	Therefore, 
	\begin{align*}
	\|D^{\lambda}f\, D^{\beta-\lambda}g\|_{\dot{K}_{p}^{\alpha,r}(\Omega)}
	&\lesssim \|D^{\lambda}f\|_{\dot{K}_{p}^{\alpha,r}(\Omega)}
	\|D^{\beta-\lambda}g\|_{L^{\infty}(\Omega)} \\
	&\lesssim \|f\|_{\dot{K}_{p,m}^{\alpha,r}(\Omega)} \|D^{\beta-\lambda}g\|_{%
		\dot{K}_{p,m-|\beta-\lambda|}^{\alpha,r}(\Omega)} \\
	&\lesssim \|f\|_{\dot{K}_{p,m}^{\alpha,r}(\Omega)} \|g\|_{\dot{K}%
		_{p,m}^{\alpha,r}(\Omega)},
	\end{align*}
	where we again used the embedding \eqref{emb1}. Hence, \eqref{main-estimate}
	holds whenever $k<|\lambda|<m$ and $|\beta-\lambda|\leq k$.
	
	\medskip
	
	\textbf{Case 2.} $|\beta-\lambda|>k$. We divide the proof into two further
	subcases.
	
	\medskip
	
	\textbf{Case 2.1.} $m-\frac{n}{p}<|\lambda|<m .$ First observe that 
	\begin{equation*}
	m-|\beta-\lambda| = m-|\beta|+|\lambda| \geq |\lambda| > m-\frac{n}{p} >
	\alpha .
	\end{equation*}
	Consequently, 
	\begin{equation*}
	0<m-|\lambda|<\frac{n}{p}, \qquad \alpha<m-|\beta-\lambda| \leq \frac{n}{p}%
	+\alpha .
	\end{equation*}
	
	Choose $q>1$ such that 
	\begin{equation*}
	p\leq qp<\frac{n}{\frac{n}{p}-m+|\lambda|} \qquad \text{and} \qquad p\leq
	q^{\prime }p<\frac{n}{\frac{n}{p}+\alpha-m+|\beta-\lambda|},
	\end{equation*}
	where $\frac1q+\frac1{q^{\prime }}=1$. Such a choice is possible because 
	\begin{align*}
	\frac{p}{n} \Big( \frac{n}{p}-m+|\lambda| + \frac{n}{p}+\alpha-m+|\beta-%
	\lambda| \Big) &= 2-\frac{p}{n}(2m-\alpha-|\beta|) \\
	&\leq 2-\frac{p}{n}(m-\alpha) \\
	&< 1,
	\end{align*}
	since $m>\frac{n}{p}+\alpha$.
	
	By Theorems~\ref{embeddingsfirst copy(1)} and \ref{embedingsp<q}, we obtain 
	\begin{equation}
	\dot{K}_{p,m-|\lambda|}^{\alpha,r}(\Omega) \hookrightarrow \dot{K}%
	_{qp}^{\alpha,r}(\Omega), \qquad \dot{K}_{p,m-|\beta-\lambda|}^{\alpha,r}(%
	\Omega) \hookrightarrow \dot{K}_{q^{\prime }p}^{0,r}(\Omega).  \label{emb2}
	\end{equation}
	
	Using H\"{o}lder's inequality, for every $k\in\mathbb{Z}$ we have 
	\begin{align*}
	2^{k\alpha} \|(D^{\lambda}f\, D^{\beta-\lambda}g)\,\chi_{R_k\cap\Omega}\|_{p}
	&\lesssim 2^{k\alpha} \|(D^{\lambda}f)\,\chi_{R_k\cap\Omega}\|_{qp}
	\|(D^{\beta-\lambda}g)\,\chi_{R_k\cap\Omega}\|_{q^{\prime }p} \\
	&\lesssim \|(D^{\lambda}f)\|_{\dot{K}_{qp}^{\alpha,\infty}(\Omega)}
	\|(D^{\beta-\lambda}g)\,\chi_{R_k\cap\Omega}\|_{q^{\prime }p}.
	\end{align*}
	Therefore, 
	\begin{align*}
	\|D^{\lambda}f\, D^{\beta-\lambda}g\|_{\dot{K}_{p}^{\alpha,r}(\Omega)}
	&\lesssim \|D^{\lambda}f\|_{\dot{K}_{qp}^{\alpha,r}(\Omega)}
	\|D^{\beta-\lambda}g\|_{\dot{K}_{q^{\prime }p}^{0,r}(\Omega)} \\
	&\lesssim \|D^{\lambda}f\|_{\dot{K}_{p,m-|\lambda|}^{\alpha,r}(\Omega)}
	\|D^{\beta-\lambda}g\|_{\dot{K}_{p,m-|\beta-\lambda|}^{\alpha,r}(\Omega)} \\
	&\lesssim \|f\|_{\dot{K}_{p,m}^{\alpha,r}(\Omega)} \|g\|_{\dot{K}%
		_{p,m}^{\alpha,r}(\Omega)},
	\end{align*}
	where in the second inequality we used the embeddings \eqref{emb2}.
	
	\medskip
	
	\textbf{Case 2.2.} 
	$k<|\lambda|\leq m-\frac{n}{p}.$ In this situation, the argument becomes more delicate, and we distinguish
	the following four cases:
	
	\begin{enumerate}
		\item $m-\frac{n}{p}<|\beta-\lambda|\leq m$,
		
		\item $k<|\beta-\lambda|<m-\frac{n}{p}$,
		
		\item $|\beta-\lambda|=m-\frac{n}{p}$ and $\alpha<\frac{n}{p}$,
		
		\item $|\beta-\lambda|=m-\frac{n}{p}$ and $\alpha\geq\frac{n}{p}$.
	\end{enumerate}
	
	\textbf{Case 2.2.1.} $m-\frac{n}{p}<|\beta-\lambda|\leq m .$ First observe that 
	\begin{equation*}
	|\beta-\lambda| = |\beta|-|\lambda| \leq m-|\lambda|,
	\end{equation*}
	and 
	\begin{equation*}
	m-|\lambda| = m+|\beta-\lambda|-|\beta| \geq |\beta-\lambda| > m-\frac{n}{p}
	> \alpha .
	\end{equation*}
	Moreover, 
	\begin{equation*}
	\alpha < m-|\lambda| \leq \frac{n}{p}+\alpha, \qquad 0<m-|\beta-\lambda|<%
	\frac{n}{p}.
	\end{equation*}
	
	Choose $q>1$ such that 
	\begin{equation*}
	p\leq qp < \frac{n}{\frac{n}{p}-m+\alpha+|\lambda|} \qquad \text{and} \qquad
	p\leq q^{\prime }p < \frac{n}{\frac{n}{p}-m+|\beta-\lambda|},
	\end{equation*}
	where $\frac1q+\frac1{q^{\prime }}=1$.
	Such a choice is possible since $m>\frac{n}{p}+\alpha$ and 
	\begin{align*}
	\frac{p}{n} \Big( \frac{n}{p}-m+\alpha+|\lambda| + \frac{n}{p}%
	-m+|\beta-\lambda| \Big) &= 2-\frac{p}{n}(2m-\alpha-|\beta|) \\
	&< 1.
	\end{align*}
	
	Again, by Theorems~\ref{embeddingsfirst copy(1)} and \ref{embedingsp<q}, we
	obtain 
	\begin{equation}
	\dot{K}_{p,m-|\lambda|}^{\alpha,r}(\Omega) \hookrightarrow \dot{K}%
	_{qp}^{0,r}(\Omega), \qquad \dot{K}_{p,m-|\beta-\lambda|}^{\alpha,r}(%
	\Omega) \hookrightarrow \dot{K}_{q^{\prime }p}^{\alpha,r}(\Omega).
	\label{emb3}
	\end{equation}
	
	Applying H\"{o}lder's inequality, for every $k\in\mathbb{Z}$ we have 
	\begin{align*}
	2^{k\alpha} \|(D^{\lambda}f\, D^{\beta-\lambda}g)\,\chi_{R_k\cap\Omega}\|_{p}
	&\lesssim 2^{k\alpha} \|(D^{\lambda}f)\,\chi_{R_k\cap\Omega}\|_{qp}
	\|(D^{\beta-\lambda}g)\,\chi_{R_k\cap\Omega}\|_{q^{\prime }p} \\
	&\lesssim \|D^{\lambda}f\|_{\dot{K}_{qp}^{0,\infty}(\Omega)} \,2^{k\alpha}
	\|(D^{\beta-\lambda}g)\,\chi_{R_k\cap\Omega}\|_{q^{\prime }p}.
	\end{align*}
	
	Consequently, 
	\begin{align*}
	\|D^{\lambda}f\, D^{\beta-\lambda}g\|_{\dot{K}_{p}^{\alpha,r}(\Omega)}
	&\lesssim \|D^{\lambda}f\|_{\dot{K}_{qp}^{0,\infty}(\Omega)}
	\|D^{\beta-\lambda}g\|_{\dot{K}_{q^{\prime }p}^{\alpha,r}(\Omega)} \\
	&\lesssim \|D^{\lambda}f\|_{\dot{K}_{p,m-|\lambda|}^{\alpha,r}(\Omega)}
	\|D^{\beta-\lambda}g\|_{\dot{K}_{p,m-|\beta-\lambda|}^{\alpha,r}(\Omega)} \\
	&\lesssim \|f\|_{\dot{K}_{p,m}^{\alpha,r}(\Omega)} \|g\|_{\dot{K}%
		_{p,m}^{\alpha,r}(\Omega)},
	\end{align*}
	where we used the embeddings \eqref{emb3}. 
	
	\textbf{Case 2.2.2.} $k<|\beta-\lambda|<m-\frac{n}{p}.$
	Assume first that $\alpha>0$. Choose $\theta>0$ such that 
	\begin{equation*}
	\frac{\frac{n}{p}+\alpha-m+|\beta-\lambda|}{\alpha} < \theta < \min\Bigl(1,%
	\frac{m-|\lambda|}{\alpha}\Bigr).
	\end{equation*}
	Such a choice is possible since 
	\begin{equation*}
	|\beta-\lambda|+|\lambda| = |\beta| < m+m-\frac{n}{p}-\alpha .
	\end{equation*}
	Consequently, 
	\begin{equation*}
	m-|\beta-\lambda| > \frac{n}{p}+\alpha-\theta\alpha, \qquad m-|\lambda| >
	\alpha-(1-\theta)\alpha .
	\end{equation*}
	
	Hence, by Theorems~\ref{embedingsq=p} and \ref{embeddingsq=infinity}, 
	\begin{equation}
	\dot{K}_{p,m-|\lambda|}^{\alpha,r}(\Omega) \hookrightarrow \dot{K}%
	_{p}^{(1-\theta)\alpha,r}(\Omega), \qquad \dot{K}_{p,m-|\beta-%
		\lambda|}^{\alpha,r}(\Omega) \hookrightarrow \dot{K}_{\infty}^{\theta%
		\alpha,r}(\Omega).  \label{emb4}
	\end{equation}
	
	Therefore, for every $k\in\mathbb{Z}$, 
	\begin{align*}
	2^{k\alpha} \|(D^{\lambda}f\,D^{\beta-\lambda}g)\,\chi_{R_k\cap\Omega}\|_{p}
	&\lesssim 2^{k\alpha} \|(D^{\lambda}f)\,\chi_{R_k\cap\Omega}\|_{p}
	\|(D^{\beta-\lambda}g)\,\chi_{R_k\cap\Omega}\|_{\infty} \\
	&\lesssim \|D^{\lambda}f\|_{\dot{K}_{p}^{(1-\theta)\alpha,\infty}(\Omega)}
	\,2^{k\theta\alpha} \|(D^{\beta-\lambda}g)\,\chi_{R_k\cap\Omega}\|_{\infty}.
	\end{align*}
	Consequently, 
	\begin{align*}
	\|D^{\lambda}f\,D^{\beta-\lambda}g\|_{\dot{K}_{p}^{\alpha,r}(\Omega)}
	&\lesssim \|D^{\lambda}f\|_{\dot{K}_{p}^{(1-\theta)\alpha,\infty}(\Omega)}
	\|D^{\beta-\lambda}g\|_{\dot{K}_{\infty}^{\theta\alpha,r}(\Omega)} \\
	&\lesssim \|D^{\lambda}f\|_{\dot{K}_{p,m-|\lambda|}^{\alpha,r}(\Omega)}
	\|D^{\beta-\lambda}g\|_{\dot{K}_{p,m-|\beta-\lambda|}^{\alpha,r}(\Omega)} \\
	&\lesssim \|f\|_{\dot{K}_{p,m}^{\alpha,r}(\Omega)} \|g\|_{\dot{K}%
		_{p,m}^{\alpha,r}(\Omega)},
	\end{align*}
	where we used the embeddings \eqref{emb4}.
	
	If $\alpha=0$, we simply choose $\theta=0$.
	
	\medskip
	
	\textbf{Case 2.2.3.} 
	$|\beta-\lambda| = m-\frac{n}{p} $ and $ \alpha<\frac{n}{p}.$ Assume first that 
	\begin{equation*}
	|\lambda|<m-\frac{n}{p}.
	\end{equation*}
	Then, for every $k\in\mathbb{Z}$, 
	\begin{align*}
	2^{k\alpha} \|(D^{\lambda}f\,D^{\beta-\lambda}g)\,\chi_{R_k\cap\Omega}\|_{p}
	&\lesssim 2^{k\alpha} \|(D^{\lambda}f)\,\chi_{R_k\cap\Omega}\|_{\infty}
	\|(D^{\beta-\lambda}g)\,\chi_{R_k\cap\Omega}\|_{p} \\
	&\lesssim 2^{k\alpha} \|(D^{\lambda}f)\,\chi_{R_k\cap\Omega}\|_{\infty}
	\|D^{\beta-\lambda}g\|_{\dot{K}_{p}^{0,\infty}(\Omega)}.
	\end{align*}
	Hence, 
	\begin{align*}
	\|D^{\lambda}f\,D^{\beta-\lambda}g\|_{\dot{K}_{p}^{\alpha,r}(\Omega)}
	&\lesssim \|D^{\lambda}f\|_{\dot{K}_{\infty}^{\alpha,\infty}(\Omega)}
	\|D^{\beta-\lambda}g\|_{\dot{K}_{p}^{0,\infty}(\Omega)} \\
	&\lesssim \|f\|_{\dot{K}_{p,m}^{\alpha,r}(\Omega)} \|g\|_{\dot{K}%
		_{p,m}^{\alpha,r}(\Omega)},
	\end{align*}
	where we used the embeddings 
	\begin{equation*}
	\dot{K}_{p,m-|\lambda|}^{\alpha,r}(\Omega) \hookrightarrow \dot{K}%
	_{\infty}^{\alpha,\infty}(\Omega), \qquad \dot{K}_{p,\frac{n}{p}%
	}^{\alpha,r}(\Omega) \hookrightarrow \dot{K}_{p}^{0,\infty}(\Omega),
	\end{equation*}
	which follow from Theorems~\ref{embedingsq=p} and \ref{embeddingsq=infinity}.
	
	Now assume that 
	\begin{equation*}
	|\lambda| = m-\frac{n}{p}.
	\end{equation*}
	Choose $q>1$ such that 
	\begin{equation*}
	p\leq qp<\frac{n}{\alpha-\alpha_1}, \qquad p\leq q^{\prime }p<\frac{n}{%
		\alpha-\alpha_2},
	\end{equation*}
	where 
	\begin{equation*}
	\frac1q+\frac1{q^{\prime }}=1, \qquad \alpha_1+\alpha_2=\alpha,\quad\alpha_1, \alpha_2\geq 0 .
	\end{equation*}
	Such a choice is possible because 
	\begin{equation*}
	\frac{p}{n} \bigl( \alpha-\alpha_1+\alpha-\alpha_2 \bigr) = \frac{p}{n}%
	\alpha < 1,
	\end{equation*}
	since $\alpha<\frac{n}{p}$.
	
	By Theorem~\ref{embeddingsfirst copy(1)}, 
	\begin{equation}
	\dot{K}_{p,\frac{n}{p}}^{\alpha,r}(\Omega) \hookrightarrow \dot{K}%
	_{qp}^{\alpha_1,r}(\Omega), \qquad \dot{K}_{p,\frac{n}{p}}^{\alpha,r}(%
	\Omega) \hookrightarrow \dot{K}_{q^{\prime }p}^{\alpha_2,\infty}(\Omega).
	\label{emb6}
	\end{equation}
	
	We now argue exactly as in Case~2.2.1, using the embeddings \eqref{emb6}
	instead of \eqref{emb3}.
	
	\medskip
	
	\textbf{Case 2.2.4.} 
$
	|\beta-\lambda| = m-\frac{n}{p}$ and $ \alpha\geq\frac{n}{p}.$ Then 
	\begin{equation*}
	|\lambda| = |\beta|-m+\frac{n}{p} < m-\alpha .
	\end{equation*}
	Moreover, for every $k\in\mathbb{Z}$, 
	\begin{align*}
	2^{k\alpha} \|(D^{\lambda}f\,D^{\beta-\lambda}g)\,\chi_{R_k\cap\Omega}\|_{p}
	&\lesssim 2^{k\alpha} \|(D^{\lambda}f)\,\chi_{R_k\cap\Omega}\|_{\infty}
	\|(D^{\beta-\lambda}g)\,\chi_{R_k\cap\Omega}\|_{p} \\
	&\lesssim \|D^{\lambda}f\|_{\dot{K}_{\infty}^{\frac{n}{p},\infty}(\Omega)}
	\,2^{k(\alpha-\frac{n}{p})} \|(D^{\beta-\lambda}g)\,\chi_{R_k\cap\Omega}\|_{p}.
	\end{align*}
	Hence, 
	\begin{equation*}
	\|D^{\lambda}f\,D^{\beta-\lambda}g\|_{\dot{K}_{p}^{\alpha,r}(\Omega)}
	\lesssim \|D^{\lambda}f\|_{\dot{K}_{\infty}^{\frac{n}{p},\infty}(\Omega)}
	\|D^{\beta-\lambda}g\|_{\dot{K}_{p}^{\alpha-\frac{n}{p},\infty}(\Omega)}.
	\end{equation*}
	
	By Theorems~\ref{embedingsq=p} and \ref{embeddingsq=infinity}, 
	\begin{equation*}
	\dot{K}_{p,m-|\lambda|}^{\alpha,\infty}(\Omega) \hookrightarrow \dot{K}%
	_{\infty}^{\frac{n}{p},\infty}(\Omega), \qquad \dot{K}_{p,\frac{n}{p}%
	}^{\alpha,\infty}(\Omega) \hookrightarrow \dot{K}_{p}^{\alpha-\frac{n}{p}%
		,\infty}(\Omega).
	\end{equation*}
	This yields the desired estimate.
	
	Therefore, \eqref{main-estimate} holds for all 
	\begin{equation*}
	k<|\lambda|<m.
	\end{equation*}
	
	\textbf{Step 2}. We now prove \eqref{main-estimate}. Arguing as in \cite{AdamsFournier03}, Theorem 4.39, and using the result established in Step 1, we obtain the desired estimate.\
	This completes the proof.
\end{proof}

For a function $f$ defined almost everywhere on $\Omega$, we denote by $%
\tilde f$ its zero extension outside $\Omega$, namely 
\begin{equation*}
\tilde f(x)= 
\begin{cases}
f(x), & \text{if } x\in\Omega, \\ 
0, & \text{if } x\in\mathbb{R}^{n}\setminus\Omega.%
\end{cases}%
\end{equation*}

We first establish the following elementary fact.

\begin{lemma}
	\label{translation1} Let $\Omega\subset\mathbb{R}^{n}$ be an open set. If $%
	1\leq p,q\leq\infty$, $\alpha\in\mathbb{R}$, and $h\in\mathbb{R}^{n}$ is
	sufficiently small, then the translation operator $\tau_h$ is bounded on $%
	K_{p}^{\alpha,q}(\Omega)$. More precisely, 
	\begin{equation}
	\|\tau_h\varphi\|_{K_{p}^{\alpha,q}(\Omega)} \lesssim
	\|\varphi\|_{K_{p}^{\alpha,q}(\Omega)}.  \label{translation3}
	\end{equation}
\end{lemma}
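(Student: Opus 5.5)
We will prove \eqref{translation3} on $\mathbb{R}^{n}$ and then restrict to $\Omega$. Here $\tau_h\varphi(x)=\tilde\varphi(x+h)$ restricted to $\Omega$, so $\|\tau_h\varphi\|_{K_{p}^{\alpha,q}(\Omega)}\le\|\tau_h\tilde\varphi\|_{K_{p}^{\alpha,q}(\mathbb{R}^{n})}$ and $\|\varphi\|_{K_{p}^{\alpha,q}(\Omega)}=\|\tilde\varphi\|_{K_{p}^{\alpha,q}(\mathbb{R}^{n})}$. It therefore suffices to treat $\Omega=\mathbb{R}^{n}$. We fix $|h|\le 1$, which is what ``sufficiently small'' will mean; the implicit constant may then depend on $\alpha,p,q,n$ but not on $h$.

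We use the equivalent norm $\bigl(\sum_{k\ge0}2^{k\alpha q}\|f\chi_{\widehat R_k}\|_p^q\bigr)^{1/q}$. Fix $k\ge0$. Then
\begin{equation*}
\|(\tau_h f)\chi_{\widehat R_k}\|_p=\|f\chi_{\widehat R_k+h}\|_p .
\end{equation*}
Since $|h|\le1$, the translated set $\widehat R_k+h$ is contained in $\{2^{k-1}-1\le|y|\le 2^k+1\}$ for $k\ge1$, and in $B(0,2)$ for $k=0$. For $k\ge2$ this annulus lies inside $R_{k-1}\cup R_k\cup R_{k+1}$. For $k\le1$ it lies inside $\widehat R_0\cup R_1\cup R_2$. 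Thus there is a fixed integer $N$ (one may take $N=2$) such that
\begin{equation*}
\widehat R_k+h\subset\bigcup_{|j-k|\le N,\ j\ge0}\widehat R_j \qquad\text{for all }k\ge0 .
\end{equation*}
This containment is the only geometric input. The one point to check is that the bound $|h|\le1$ is small compared with the ring widths $2^{k-1}$ uniformly in $k\ge0$. This holds precisely because the nonhomogeneous decomposition stops at the unit ball, so there are no small rings near the origin.

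From the containment we get
\begin{equation*}
\|(\tau_h f)\chi_{\widehat R_k}\|_p\le\sum_{|j-k|\le N}\|f\chi_{\widehat R_j}\|_p .
\end{equation*}
Multiplying by $2^{k\alpha}$ and using $2^{k\alpha}\le 2^{N|\alpha|}2^{j\alpha}$ for $|j-k|\le N$ gives
\begin{equation*}
2^{k\alpha}\|(\tau_h f)\chi_{\widehat R_k}\|_p\le 2^{N|\alpha|}\sum_{|i|\le N}2^{(k+i)\alpha}\|f\chi_{\widehat R_{k+i}}\|_p .
\end{equation*}
We then take the $\ell^q$ norm in $k$ (or the supremum when $q=\infty$). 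The triangle inequality in $\ell^q$, applied to the $2N+1$ shifted sequences, yields
\begin{equation*}
\|\tau_h f\|_{K_{p}^{\alpha,q}}\lesssim (2N+1)\,2^{N|\alpha|}\,\|f\|_{K_{p}^{\alpha,q}} .
\end{equation*}
The case $p=\infty$ is identical, since it only uses monotonicity of the $L^\infty$ norm in the set.

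We expect no real obstacle beyond the containment step. We remark, however, that the analogous statement fails for the homogeneous spaces when $\alpha\neq 0$: near the origin the rings $R_k$ with $2^k\ll|h|$ are moved far away by the translation, so no uniform constant is possible. This is why the lemma is stated only for $K_{p}^{\alpha,q}$.
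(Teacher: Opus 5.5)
Your proof is correct and follows essentially the same route as the paper: reduce to $\mathbb{R}^{n}$, then observe that for small $|h|$ each translated ring lies in a bounded number of neighbouring rings, so the weights change by at most a fixed factor $2^{c|\alpha|}$. The paper uses $B_2$ for the unit ball and $\widetilde R_k=\{2^{k-2}\le|x|\le 2^{k+1}\}$ for $k\ge1$. Your reduction via the zero extension $\tilde\varphi$ and monotonicity of the norm is actually cleaner than the paper's. The paper justifies that step by asserting $\Omega=\Omega+h$ for small $h$, which fails for general (e.g.\ bounded) $\Omega$.
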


\begin{proof}
	Since $h$ is sufficiently small, we have $\Omega=\Omega+h.$ Therefore, it suffices to prove \eqref{translation3} in the special case $%
	\Omega=\mathbb{R}^{n}$. By similarity we can assume that $ 1\leq q<\infty$. We write 
	\begin{equation*}
	\|\tau_h\varphi\|_{K_{p}^{\alpha,q}(\mathbb{R}^{n})}^{q} = I_1+I_2,
	\end{equation*}
	where 
	\begin{align*}
	I_1 &= \|\tau_h\varphi\,\chi_{B_0}\|_{L^{p}(\mathbb{R}^{n})}^{q}, \qquad I_2 = \sum_{\substack{ k\in\mathbb{N}  \\ 2^{k}\geq 2|h|}} 2^{k\alpha q}
	\|\tau_h\varphi\,\chi_{R_k}\|_{L^{p}(\mathbb{R}^{n})}^{q}.
	\end{align*}
	
	Observe that 
	\begin{equation*}
	I_1^{1/q} \leq \|\varphi\,\chi_{B_2}\|_{L^{p}(\mathbb{R}^{n})} \lesssim
	\|\varphi\|_{K_{p}^{\alpha,q}(\mathbb{R}^{n})}.
	\end{equation*}
	Moreover, 
	\begin{align*}
	I_2 &\leq \sum_{\substack{ k\in\mathbb{N}  \\ 2^{k}\geq 2|h|}} 2^{k\alpha q}
	\|\varphi\,\chi_{\widetilde R_k}\|_{L^{p}(\mathbb{R}^{n})}^{q} 
	\lesssim \|\varphi\|_{K_{p}^{\alpha,q}(\mathbb{R}^{n})}^{q},
	\end{align*}
	where 
	\begin{equation*}
	\widetilde R_k = \{x\in\mathbb{R}^{n}:2^{k-2}\leq |x|\leq 2^{k+1}\}.
	\end{equation*}
	Combining the above estimates completes the proof.
\end{proof}

\begin{remark}
	\label{translation1 copy(1)} For arbitrary $h\in\mathbb{R}^{n}$, one has 
	\begin{equation*}
	\|\tau_h\varphi\|_{K_{p}^{\alpha,q}(\mathbb{R}^{n})} \lesssim
	(1+|h|)^{|\alpha|} \|\varphi\|_{K_{p}^{\alpha,q}(\mathbb{R}^{n})}
	\end{equation*}
	for every $1\leq p,q\leq\infty$ and $\alpha\in\mathbb{R}$, $\alpha\neq0$;
	see Lemma~4.1 in \cite{Fi-We08}.
\end{remark}

The following convolution inequality plays an essential role in our arguments.
\begin{lemma}
	\label{Key-est1}
	\textit{Let } $x\in \mathbb{R}^{n}$, $1\leq p\leq \infty$, 
	$0\leq \alpha < n-\frac{n}{p}$, \textit{and} $R>0$. 
	\textit{Then there exists a constant} $c>0$, 
	\textit{independent of} $R$, \textit{such that for every}
	$f\in K_{p}^{\alpha,\infty}$,
	\textit{the estimate}
	\begin{equation}
	\eta_{R,N}\ast |f|(x)
	\leq
	c\,\max\left(R^{\frac{n}{p}},
	R^{\frac{n}{p}+\alpha}\right)
	\|f\|_{K_{p}^{\alpha,\infty}}
	\label{convolution}
	\end{equation}
	\textit{holds whenever} $N\in\mathbb{N}$ \textit{is sufficiently large, where} $\eta_{R,N}=R^{n}(1+R|\cdot|)^{-N}$.
\end{lemma}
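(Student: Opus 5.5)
We estimate the convolution directly by splitting $\mathbb{R}^{n}$ into the dyadic annuli $R_j$, $j\ge1$, together with $B_0$. On each piece we apply Hölder's inequality with exponents $p,p'$ and control $\|f\chi_{R_j}\|_p$ through the $K_p^{\alpha,\infty}$ norm. Write
\begin{equation*}
\eta_{R,N}\ast|f|(x)=\int_{B_0}\eta_{R,N}(x-y)|f(y)|\,dy+\sum_{j\ge1}\int_{R_j}\eta_{R,N}(x-y)|f(y)|\,dy=:J_0+\sum_{j\ge1}J_j .
\end{equation*}
By Hölder,
\begin{equation*}
J_j\le \|f\chi_{R_j}\|_p\,\|\eta_{R,N}(x-\cdot)\chi_{R_j}\|_{p'}\le 2^{-j\alpha}\|f\|_{K_p^{\alpha,\infty}}\,\|\eta_{R,N}(x-\cdot)\chi_{R_j}\|_{p'},
\end{equation*}
and similarly $J_0\le \|f\|_{K_p^{\alpha,\infty}}\|\eta_{R,N}\|_{p'}$. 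A change of variables gives $\|\eta_{R,N}\|_{p'}=R^{n}R^{-n/p'}\|(1+|\cdot|)^{-N}\|_{p'}=cR^{n/p}$ once $Np'>n$. This yields the $R^{n/p}$ term and accounts for all $j$ with $2^{j}\lesssim 1$.

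The main point is summing $\sum_j 2^{-j\alpha}\|\eta_{R,N}(x-\cdot)\chi_{R_j}\|_{p'}$ uniformly in $x$. For that we need a second bound on each piece. Since $\eta_{R,N}\le R^{n}$ and $|R_j|\approx 2^{jn}$, we have
\begin{equation*}
\|\eta_{R,N}(x-\cdot)\chi_{R_j}\|_{p'}\le \min\bigl(cR^{n/p},\,R^{n}2^{jn/p'}\bigr).
\end{equation*}
Setting $t=2^{j}$, we split the sum at $t\approx R^{-1}$.

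\emph{Terms with $2^{j}\le R^{-1}$.} We use the bound $R^{n}2^{j(n/p'-\alpha)}$. Because $\alpha<n-n/p=n/p'$, the exponent $n/p'-\alpha$ is positive, so the geometric sum is dominated by its largest term:
\begin{equation*}
\lesssim R^{n}R^{-(n/p'-\alpha)}=R^{n/p+\alpha}.
\end{equation*}
This is exactly where the hypothesis $\alpha<n-\frac np$ enters.

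\emph{Terms with $2^{j}> R^{-1}$.} We use the bound $cR^{n/p}2^{-j\alpha}$. If $\alpha>0$, the sum over $2^{j}>R^{-1}$ is $\lesssim R^{n/p}\min(1,R^{\alpha})\le R^{n/p+\alpha}$ when $R\le1$, and when $R>1$ it is $\lesssim R^{n/p}$. If $\alpha=0$ the sum diverges, so we need genuine decay from $\eta$. Here the bound $\|\cdot\|_{p'}\lesssim R^{n/p}$ is too crude, and this is the main obstacle.

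To handle it, we exploit that for fixed $x$ only boundedly many annuli meet the ball $B(x,|x|/2)$. On the remaining annuli, $|x-y|\gtrsim \max(2^{j},|x|)$, so
\begin{equation*}
\eta_{R,N}(x-y)\le R^{n}(R2^{j})^{-N}.
\end{equation*}
This gives
\begin{equation*}
\|\eta_{R,N}(x-\cdot)\chi_{R_j}\|_{p'}\lesssim R^{n-N}2^{j(n/p'-N)},
\end{equation*}
which is summable over $2^{j}>R^{-1}$ with total $\lesssim R^{n/p}$ for $N>n/p'$, and this holds for all $\alpha\ge0$. The finitely many annuli near $x$ are each bounded by $cR^{n/p}2^{-j\alpha}\le cR^{n/p}$.

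Collecting the three contributions, every term is $\lesssim \max(R^{n/p},R^{n/p+\alpha})\|f\|_{K_p^{\alpha,\infty}}$, with constants depending only on $n,p,\alpha,N$. This gives \eqref{convolution} for $N$ large, say $N>n+1$. The case $p=1$ ($p'=\infty$) is handled identically with sup norms.
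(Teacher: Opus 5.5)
Your proof is correct. It uses the same three estimates as the paper, but organizes them differently. The shared estimates are:
\begin{itemize}
\item Hölder with $\|\eta_{R,N}\|_{p'}\approx R^{n/p}$;
\item the crude bound $\eta_{R,N}\le R^{n}$ together with the geometric sum $\sum_{2^{j}\le R^{-1}}2^{j(n/p'-\alpha)}\approx R^{-(n/p'-\alpha)}$, which is where $\alpha<n-\frac np$ enters;
\item the decay $\eta_{R,N}(x-y)\lesssim R^{n}(R|x-y|)^{-N}$ on pieces far from $x$.
\end{itemize}

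The paper runs a case analysis on the position of $x$ and on the size of $R$. It treats $|x|<1/R$, then $1/R\le |x|\le 1$, then $|x|>1$ with $2^{k-1}\le|x|<2^{k}$, and in the first case also distinguishes $R\le 4$ from $R>4$. In each case it cuts $\mathbb{R}^{n}$ into regions adapted to that case: $B(0,4/R)$ and its complement, $B_2$, $B_{k-2}$, $\breve R_k$, and $\{|y|\ge 2^{k+2}\}$.

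You instead fix the annuli $R_j$ once and bound $\|\eta_{R,N}(x-\cdot)\chi_{R_j}\|_{p'}\le\min\bigl(cR^{n/p},CR^{n}2^{jn/p'}\bigr)$ uniformly in $x$. You then split only the index set at $2^{j}\approx R^{-1}$. On the large annuli you separate the at most three annuli meeting $B(x,|x|/2)$ from the rest.

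What your organization buys:
\begin{itemize}
\item The proof is shorter, and uniformity of the constant in $x$ is automatic.
\item The role of each hypothesis is visible: $\alpha<n/p'$ is used only for $2^{j}\le R^{-1}$, while $\alpha\ge 0$ governs the large annuli.
\item Your remark that $\sum 2^{-j\alpha}$ diverges at $\alpha=0$ shows exactly why the decay of $\eta_{R,N}$ is needed there.
\end{itemize}

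The paper's case-by-case bookkeeping is longer but tracks the location of $x$ explicitly. For example, the piece near $x$ in Substep 2.2 is bounded by $R^{n/p}2^{-\alpha k}\|f\|_{K_p^{\alpha,\infty}}$, which gives extra decay in $|x|$ that the lemma does not need.

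A minor point: for $p=1$ the condition $0\le\alpha<n-\frac np$ is empty, so your closing remark about $p'=\infty$ is vacuous rather than necessary.
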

\begin{proof}
	We proceed in two steps.
	
	\textbf{Step 1.} In this step, we prove \eqref{convolution} for any
	$x\in B(0,\frac1R)$. Write $\eta_{R,N}\ast |f|=I_1+I_2,$
	where
	\[
	I_1=\eta_{R,N}\ast \chi_{B(0,\frac4R)}|f|,
	\qquad
	I_2=\eta_{R,N}\ast
	\chi_{\mathbb{R}^n\setminus B(0,\frac4R)}|f|.
	\]
	We estimate each term separately. We begin with $I_1$.
	
	First, assume that $R\leq 4$. Then
	\[
	I_1=
	\eta_{R,N}\ast \chi_{B_0}|f|
	+\eta_{R,N}\ast
	\chi_{\{1\leq |\cdot|\leq \frac4R\}}|f|
	=:
	I_{1,1}+I_{1,2}.
	\]
	
	By  H\"{o}lder's inequality,
	\[
	I_{1,1}
	\leq
	\|\eta_{R,N}\|_{p'}
	\|f\chi_{B_0}\|_p
	\leq
	R^{\frac np}
	\|f\chi_{B_0}\|_p
	\leq
	R^{\frac np}
	\|f\|_{K_p^{\alpha,\infty}}.
	\]
	
	Let $l\in\mathbb N$ satisfy $2^{l-1}\leq \frac4R<2^l.$
	Again, by  H\"{o}lder's inequality,
	\begin{align*}
	I_{1,2}
	&\leq
	\sum_{v=1}^{l}
	\eta_{R,N}\ast \chi_{R_v}|f|\leq
	R^n
	\sum_{v=1}^{l}
	\|f\chi_{R_v}\|_1\lesssim
	R^n
	\sum_{v=1}^{l}
	2^{v\frac{n}{p'}}
	\|f\chi_{R_v}\|_p,
	\end{align*}
	which is bounded by
	\[
	cR^n
	\sum_{v=1}^{l}
	2^{v(\frac{n}{p'}-\alpha)}
	\|f\|_{K_p^{\alpha,\infty}}
	\lesssim
	R^{\frac np+\alpha}
	\|f\|_{K_p^{\alpha,\infty}},
	\]
	where we used the assumption
	$\alpha<n-\frac np$, and the implicit constant is independent of $R$.
	
	Now suppose that $R>4$. Then
	\[
	I_1
	\leq
	\eta_{R,N}\ast \chi_{B_0}|f|
	\lesssim
	R^{\frac np}
	\|f\|_{K_p^{\alpha,\infty}}.
	\]
	
	Next, we estimate $I_2$. Assume first that $R\leq4$.
	By  H\"{o}lder's inequality,
	\begin{align*}
	I_2(x)
	&\leq
	\sum_{v=l}^{\infty}
	\eta_{R,N}\ast \chi_{R_v}|f|(x)
	\leq
	R^{n-N}
	\sum_{v=l}^{\infty}
	2^{-Nv}
	\|f\chi_{R_v}\|_1
	\leq
	R^{n-N}
	\sum_{v=l}^{\infty}
	2^{v(\frac{n}{p'}-N)}
	\|f\chi_{R_v}\|_p.
	\end{align*}
	
	Hence,
	\begin{align*}
	I_2
	&\lesssim
	R^{n-N}
	\sum_{v=l}^{\infty}
	2^{v(-N+\frac{n}{p'}-\alpha)}
	\|f\|_{K_p^{\alpha,\infty}}
	\lesssim
	R^{\frac np+\alpha}
	\|f\|_{K_p^{\alpha,\infty}},
	\end{align*}
	provided that $N$ is sufficiently large.
	
	Now assume that $R>4$. Write $I_2=I_{2,3}+I_{2,4},$
	where
	\[
	I_{2,3}
	=
	\eta_{R,N}\ast
	\chi_{\{\frac4R\leq |\cdot|\leq1\}}|f|,
	\qquad
	I_{2,4}
	=
	\eta_{R,N}\ast
	\chi_{\{|\cdot|>1\}}|f|.
	\]
	
	Obviously,
	\[
	I_{2,3}
	\leq
	\eta_{R,N}\ast \chi_{B_0}|f|
	\lesssim
	R^{\frac np}
	\|f\|_{K_p^{\alpha,\infty}},
	\]
	and
	\begin{align*}
	I_{2,4}(x)
	&=
	\sum_{v=1}^{\infty}
	\eta_{R,N}\ast \chi_{R_v}|f|(x)
	\\
	&\lesssim
	R^{n-N}
	\sum_{v=1}^{\infty}
	2^{v(-N+\frac{n}{p'}-\alpha)}
	\|f\|_{K_p^{\alpha,\infty}}
	\\
	&\lesssim
	R^{\frac np+\alpha}
	\|f\|_{K_p^{\alpha,\infty}},
	\end{align*}
	since $N$ is sufficiently large, $\alpha\ge0$, and in this case
	$R>4$.
	
	\textbf{Step 2.} In this step, we prove \eqref{convolution} for any
	$x\notin B(0,\frac{1}{R})$.
	
	\textbf{Substep 2.1.} Assume that
	$\frac{1}{R}\leq |x|\leq 1$. We write $\eta_{R,N}\ast |f|=F_{1}+F_{2},$
	where
	\[
	F_{1}=\eta_{R,N}\ast \chi_{\bar{B_{2}}}|f|,
	\qquad
	F_{2}=\eta_{R,N}\ast \chi_{\{|\cdot |>2\}}|f|.
	\]
	
	Observe that
	\[
	F_{1}\lesssim
	\max\bigl(R^{\frac{n}{p}},R^{\frac{n}{p}+\alpha}\bigr)
	\|f\|_{K_{p}^{\alpha,\infty}}.
	\]
	By  H\"{o}lder's inequality, we obtain
	\begin{align*}
	F_{2}(x)
	&=
	\sum_{v=2}^{\infty}
	\eta_{R,N}\ast \chi_{R_v}|f|(x)
	\\
	&\lesssim
	R^{n-N}
	\sum_{v=2}^{\infty}
	2^{-Nv}
	\|f\chi_{R_v}\|_{1}
	\\
	&\lesssim
	R^{n-N}
	\sum_{v=2}^{\infty}
	2^{v(\frac{n}{p'}-N)}
	\|f\chi_{R_v}\|_{p}.
	\end{align*}
	Hence,
	\begin{align*}
	F_{2}
	&\lesssim
	R^{n-N}
	\sum_{v=2}^{\infty}
	2^{v(\frac{n}{p'}-\alpha-N)}
	\|f\|_{K_{p}^{\alpha,\infty}}
	\\
	&\lesssim
	R^{\frac{n}{p}+\alpha}
	\|f\|_{K_{p}^{\alpha,\infty}},
	\end{align*}
	provided that $N$ is chosen sufficiently large. Since
	$\alpha\geq 0$ and $R\geq 1$ in this case, the desired estimate follows.
	
	\textbf{Substep 2.2.} Assume that $|x|>1$. Let $k\in \mathbb{N}$ be such
	that $2^{k-1}\leq |x|<2^{k}$. We write
	$\eta_{R,N}\ast |f|=M_{1}+M_{2}+M_{3}$, where
	\begin{equation*}
	M_{1}=\eta_{R,N}\ast \chi_{B_{k-2}}|f|,
	\qquad
	M_{2}=\eta_{R,N}\ast \chi_{\breve{R}_{k}}|f|,
	\end{equation*}
	with
	$
	\breve{R}_{k}
	=
	\{y\in\mathbb{R}^{n}:2^{k-2}\le |y|<2^{k+2}\},
	$
	and
	\begin{equation*}
	M_{3}
	=
	\eta_{R,N}\ast \chi_{\{|\cdot|\ge 2^{k+2}\}}|f|.
	\end{equation*}
	
	By  H\"{o}lder's inequality,
	\begin{align*}
	M_{2}
	&\le
	\|\eta_{R,N}\|_{p'}\,
	\|\chi_{\breve{R}_{k}}f\|_{p}
	\\
	&\lesssim
	R^{\frac{n}{p}}
	\|\chi_{\breve{R}_{k}}f\|_{p}
	\\
	&\lesssim
	R^{\frac{n}{p}}
	2^{-\alpha k}
	\|f\|_{K_p^{\alpha,\infty}}
	\\
	&\lesssim
	R^{\frac{n}{p}}
	\|f\|_{K_p^{\alpha,\infty}},
	\end{align*}
	since $k\in\mathbb N$ and $\alpha\ge0$.
	
	Now
	\[
	M_{1}
	=
	\eta_{R,N}\ast \chi_{B_{0}}|f|
	+
	\eta_{R,N}\ast
	\chi_{\{1\le |\cdot|<2^{k-2}\}}|f|.
	\]
	We have
	\begin{align*}
	\eta_{R,N}\ast
	\chi_{\{1\le |\cdot|<2^{k-2}\}}|f|
	&\le
	R^n(R2^k)^{-N}
	\big\|
	\chi_{\{1\le |\cdot|<2^{k-2}\}}f
	\big\|_1
	\\
	&=
	R^n(R2^k)^{-N}
	\sum_{l=1}^{k-2}
	\|\chi_l f\|_1
	\\
	&\lesssim
	R^n(R2^k)^{-N}
	\sum_{l=1}^{k-2}
	2^{(\frac{n}{p'}-\alpha)l}
	2^{\alpha l}
	\|\chi_l f\|_p
	\\
	&\lesssim
	R^{n-N}
	2^{-kN}
	\sum_{l=1}^{k-2}
	2^{(\frac{n}{p'}-\alpha)l}
	\|f\|_{K_p^{\alpha,\infty}}.
	\end{align*}
	
	Since $\alpha<n-\frac{n}{p}=\frac{n}{p'}$, we have
	$\frac{n}{p'}-\alpha>0$, and hence
	\[
	\sum_{l=1}^{k-2}
	2^{(\frac{n}{p'}-\alpha)l}
	\lesssim
	2^{(\frac{n}{p'}-\alpha)k}.
	\]
	Therefore,
	\[
	\eta_{R,N}\ast
	\chi_{\{1\le |\cdot|<2^{k-2}\}}|f|
	\lesssim
	R^{n-N}
	2^{k(\frac{n}{p'}-\alpha-N)}
	\|f\|_{K_p^{\alpha,\infty}}.
	\]
	Hence
	\[
	\eta_{R,N}\ast
	\chi_{\{1\le |\cdot|<2^{k-2}\}}|f|
	\lesssim
	R^{\frac{n}{p}+\alpha}
	(R2^k)^{\frac{n}{p'}-\alpha-N}
	\|f\|_{K_p^{\alpha,\infty}}
	\lesssim
	R^{\frac{n}{p}+\alpha}
	\|f\|_{K_p^{\alpha,\infty}},
	\]
	since $R2^k\ge1$ and $N$ is chosen sufficiently large.
	
	Finally,
	\begin{equation*}
	M_{3}(x)
	=
	\sum_{l=k+3}^{\infty}
	\eta_{R,N}\ast\chi_{R_l}|f|(x)
	\le
	R^{n-N}
	\sum_{l=k+3}^{\infty}
	2^{-Nl}
	\|\chi_l f\|_1,
	\end{equation*}
	which is bounded by
	\begin{align*}
	&R^{n-N}
	\sum_{l=k+3}^{\infty}
	2^{(\frac{n}{p'}-\alpha-N)l}
	2^{\alpha l}
	\|\chi_l f\|_p
	\\
	&\lesssim
	R^{n-N}
	\sum_{l=k+3}^{\infty}
	2^{(\frac{n}{p'}-\alpha-N)l}
	\|f\|_{K_p^{\alpha,\infty}}
	\\
	&\lesssim
	R^{\frac{n}{p}+\alpha}
	\|f\|_{K_p^{\alpha,\infty}}.
	\end{align*}
	The proof is complete.
	
\end{proof}
Similarly to Lemma \ref{Key-est1}, we obtain the following result.

\begin{lemma}
	\label{Key-est2}
	\textit{Let } $x\in\mathbb{R}^{n}$, $1\leq p\leq\infty$, \textit{and } $R>0$.
	\textit{Then there exists a constant } $c>0$, \textit{independent of } $R$,
	\textit{such that for all } $f\in K_{p}^{\,n-\frac{n}{p},1}$,
	\textit{we have}
	\begin{equation*}
	\eta_{R,N}*|f|(x)
	\leq
	c\max\left(R^{\frac{n}{p}},
	R^{n}\right)
	\|f\|_{K_{p}^{\,n-\frac{n}{p},1}}
	\end{equation*}
	\textit{for any sufficiently large } $N\in\mathbb{N}$.
\end{lemma}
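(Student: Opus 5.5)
We follow the proof of Lemma \ref{Key-est1} step by step, now with $\alpha=n-\frac{n}{p}=\frac{n}{p'}$. In Lemma \ref{Key-est1} the hypothesis $\alpha<n-\frac np$ was used only to sum the geometric series $\sum_{v}2^{v(\frac{n}{p'}-\alpha)}$ over the dyadic annuli close to the origin relative to the scale $\frac1R$. In the endpoint case this exponent vanishes, so a geometric bound is no longer available. We compensate by using the $\ell^1$ summability in $v$ built into the norm of $K_{p}^{\,n-\frac{n}{p},1}$. Throughout we use the elementary estimate
\begin{equation*}
\|f\chi_{R_v}\|_{1}\lesssim 2^{v\frac{n}{p'}}\|f\chi_{R_v}\|_{p}=2^{v(n-\frac np)}\|f\chi_{R_v}\|_{p},
\end{equation*}
and the right-hand side sums over $v\in\mathbb{N}$ to at most $\|f\|_{K_{p}^{\,n-\frac{n}{p},1}}$. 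Since $\alpha+\frac np=n$, the target bound $\max(R^{\frac np},R^{\frac np+\alpha})$ becomes $\max(R^{\frac np},R^{n})$.

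\textbf{Step 1: $x\in B(0,\frac1R)$.} We split $\eta_{R,N}\ast|f|=I_1+I_2$ exactly as before.
\begin{itemize}
\item For $I_1$ with $R\le4$, the ball part gives $R^{\frac np}\|f\chi_{B_0}\|_p$ by H\"older, using $\|\eta_{R,N}\|_{p'}\lesssim R^{\frac np}$.
\item The annuli $1\le|y|\le\frac4R$ contribute at most $R^n\sum_{v\le l}\|f\chi_{R_v}\|_1\lesssim R^n\|f\|_{K_p^{n-\frac np,1}}$, because $\eta_{R,N}\le R^n$. No geometric sum is needed, and this replaces the step that previously required $\alpha<n-\frac np$.
\item For the tail $I_2$, the kernel gives the decay factor $R^{n-N}2^{-Nv}$. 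Hence $I_2\lesssim R^{n}\sum_v (R2^v)^{-N}2^{v\frac n{p'}}\|f\chi_{R_v}\|_p$. Since $R2^v\gtrsim1$ for $v\ge l$ (or $v\ge1$ when $R>4$), each factor $(R2^v)^{-N}$ is bounded, and we obtain $\lesssim R^n\|f\|_{K_p^{n-\frac np,1}}$.
\item When $R>4$, the piece $\{\frac4R\le|y|\le1\}$ is again controlled by $R^{\frac np}\|f\chi_{B_0}\|_p$.
\end{itemize}

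\textbf{Step 2: $|x|\ge\frac1R$.}
\begin{itemize}
\item For $\frac1R\le|x|\le1$, the term $F_1$ over $\overline{B_2}$ is handled as in Step 1. For $F_2$, on $|y|>2$ we have $|x-y|\gtrsim2^v$, so the kernel is at most $R^{n}(R2^v)^{-N}$, and we sum against the $\ell^1$ norm as above. Here $R\ge1$ makes the bound $R^n$.
\item For $|x|>1$ with $2^{k-1}\le|x|<2^k$, the near part $M_2$ is bounded by H\"older: $R^{\frac np}\|f\chi_{\breve R_k}\|_p\lesssim R^{\frac np}2^{-k(n-\frac np)}\|f\|\le R^{\frac np}\|f\|$.
\item For $M_1$, on $1\le|y|<2^{k-2}$ the kernel is at most $R^n(R2^k)^{-N}$. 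Then $\sum_{l\le k-2}\|\chi_lf\|_1\le\|f\|_{K_p^{n-\frac np,1}}$, and we bound $R^n(R2^k)^{-N}\le R^n$ when $R2^k\ge1$. When $R2^k<1$ we use $(R2^k)^{-N}\ge1$ differently, by estimating the kernel directly by $R^n$. In either case $M_1\lesssim R^n\|f\|$. The ball part of $M_1$ is handled by H\"older.
\item $M_3$ is treated identically to $M_1$, with kernel bound $R^n(R2^l)^{-N}\le R^n$.
\end{itemize}

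\textbf{Main obstacle.} The main point needing care is that every region previously controlled by a geometric series in $2^{v(\frac{n}{p'}-\alpha)}$ must now be controlled by the $\ell^1$ sum. This works as long as we bound $\eta_{R,N}$ pointwise by $R^n\min(1,(R\,\mathrm{dist})^{-N})$ and never discard the possibility that $R2^v<1$. We resolve this by always using the trivial bound $\eta_{R,N}\le R^n$ in that regime. For $p=1$ the Herz space is $K_1^{0,1}$ and the argument reduces to $\eta_{R,N}\ast|f|\le R^n\|f\|_1$. For $p=\infty$ the same annulus estimates hold with $\frac n{p'}=n$.
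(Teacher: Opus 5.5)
Your proposal is correct and takes the route the paper intends: the paper only says the lemma follows ``similarly to Lemma \ref{Key-est1}'', and you carry out that adaptation, replacing the geometric sums in $2^{v(\frac{n}{p'}-\alpha)}$ by the $\ell^1$ summability of the $K_{p}^{\,n-\frac{n}{p},1}$ norm and handling the regime $R2^k<1$ via $\eta_{R,N}\le R^n$. In fact your estimates show the case analysis is unnecessary: since $\|f\|_{1}\lesssim\|f\|_{K_{p}^{\,n-\frac{n}{p},1}}$, one has $\eta_{R,N}*|f|(x)\le\|\eta_{R,N}\|_{\infty}\|f\|_{1}\lesssim R^{n}\|f\|_{K_{p}^{\,n-\frac{n}{p},1}}$ for every $N\ge 0$, which already gives the claim.
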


\begin{remark}
	\label{Key-est3}
	(i) Let $x\in\mathbb{R}^{n}$, $1<p\leq\infty$, $\alpha\geq0$, and $R>0$.
	By Lemma \ref{Key-est1} and \eqref{herz-emb1}, there exists a constant $c>0$, depending on $R$, such that for every
	$f\in K_{p}^{\,\alpha,\infty}$,
	\begin{equation*}
	\eta_{R,N}*|f|(x)
	\leq
	c\,\|f\|_{K_{p}^{\,\alpha,\infty}}
	\end{equation*}
	for all sufficiently large $N\in\mathbb{N}$.
	
	(ii) Let $x\in\mathbb{R}^{n}$, $\alpha\geq0$, and $R>0$.
	By Lemma \ref{Key-est2} and \eqref{herz-emb1}, there exists a constant $c>0$, depending on $R$, such that for every
	$f\in K_{1}^{\,\alpha,r}$,
	\begin{equation*}
	\eta_{R,N}*|f|(x)
	\leq
	c\,\|f\|_{K_{1}^{\,\alpha,r}}
	\end{equation*}
	for all sufficiently large $N\in\mathbb{N}$, where
	\begin{equation*}
	r=
	\begin{cases}
	\infty, & \text{if }\alpha>0,\\
	1, & \text{if }\alpha=0.
	\end{cases}
	\end{equation*}
\end{remark}
Recall that a subset $A$ of a normed space $X$ is called \emph{precompact}
if its closure $\overline A$ is compact in the norm topology of $X$.

Equivalently, a subset $A\subset X$ is precompact if and only if for every $%
\varepsilon>0$ there exists a finite subset $\{x_1,\dots,x_N\}\subset X$
such that 
\begin{equation*}
A\subset \bigcup_{j=1}^{N} B(x_j,\varepsilon).
\end{equation*}

We now prove a compactness criterion in Herz spaces.

\begin{theorem}
	\label{precompact} Let $\Omega\subset\mathbb{R}^{n}$ be an open set $1<p<\infty$, $1\leq
	q<\infty$ and $ \alpha \geq 0$. Suppose that $A\subset
	K_{p}^{\alpha,q}(\Omega)$ is bounded. Then the following assertions are
	equivalent:
	
	\begin{enumerate}
		\item[(i)] $A$ is precompact in $K_{p}^{\alpha,q}(\Omega)$;
		
		\item[(ii)] For every $\varepsilon>0$ there exist $\delta>0$ and a 
		subset $G\Subset\Omega$ such that for every $f\in A$ and every $h\in\mathbb{R}%
		^{n}$ with $|h|<\delta$, 
		\begin{equation}
		\|\tilde f(\,\cdot+h)-\tilde f\|_{K_{p}^{\alpha,q}(\Omega)} < \varepsilon
		\label{prop3}
		\end{equation}
		and 
		\begin{equation}
		\|f\|_{K_{p}^{\alpha,q}(\Omega\setminus\overline G)} < \varepsilon .
		\label{pro2}
		\end{equation}
	\end{enumerate}
\end{theorem}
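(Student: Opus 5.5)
The plan is to adapt the classical Kolmogorov--Riesz--Fréchet argument (as in Adams--Fournier, Theorem 2.32) to the weighted dyadic structure of $K_{p}^{\alpha,q}$. Throughout we work with zero extensions $\tilde f$. Since $\alpha\ge 0$ and $G\Subset\Omega$ is bounded, on any bounded set the $K_{p}^{\alpha,q}$ norm is equivalent to the $L^{p}$ norm. Indeed, if $G\subset B_{L}$, only the indices $0\le k\le L$ enter, and each weight $2^{k\alpha}$ lies in $[1,2^{L\alpha}]$. Thus $\|f\|_{K_{p}^{\alpha,q}(G)}\approx\|f\|_{L^{p}(G)}$, with constants depending on $L$; this equivalence will be used repeatedly.

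\emph{(i)$\Rightarrow$(ii).} Given $\varepsilon>0$, cover $A$ by finitely many balls $B(f_j,\varepsilon/3)$. By Theorem~\ref{dense}(ii) we may take $f_j\in C_c(\Omega)$, after adjusting $\varepsilon$. For each $f_j$, condition \eqref{pro2} holds with $G$ a relatively compact neighbourhood of $\bigcup_j\operatorname{supp}f_j$. Condition \eqref{prop3} holds for small $|h|$ by uniform continuity together with bounded supports, where only finitely many annuli contribute. For general $f\in A$, pick $j$ with $\|f-f_j\|<\varepsilon/3$. The bound $\|\tau_h(\tilde f-\tilde f_j)\|\lesssim\|\tilde f-\tilde f_j\|$ holds for $|h|\le 1$, with a constant independent of $h$, by Lemma~\ref{translation1} (or Remark~\ref{translation1 copy(1)}) applied on $\mathbb{R}^n$. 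With $\varepsilon/3$ replaced by $\varepsilon/(3C)$, the triangle inequality gives \eqref{prop3}. Estimate \eqref{pro2} follows in the same way.

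\emph{(ii)$\Rightarrow$(i).} Fix $\varepsilon$, take $G$ and $\delta$ from (ii), and set $f_\rho=\eta_\rho*\tilde f$, where $\eta_\rho$ is a standard mollifier with $\rho<\min(\delta,\operatorname{dist}(G,\partial\Omega))$. Minkowski's integral inequality, applied annulus by annulus and then in $\ell^{q}$, gives
\[
\|f_\rho-\tilde f\|_{K_{p}^{\alpha,q}}\le\sup_{|h|<\rho}\|\tilde f(\cdot+h)-\tilde f\|_{K_{p}^{\alpha,q}}<\varepsilon .
\]
One point needs care: \eqref{prop3} controls the norm only on $\Omega$, not on $\mathbb{R}^n$. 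I would handle this by estimating on $G$ only, since the complement is already small by \eqref{pro2}. Next, consider $\{f_\rho|_{\overline G}:f\in A\}$. It is uniformly bounded, because $|f_\rho(x)|\le\|\eta_\rho\|_{p'}\|\tilde f\chi_{B(x,\rho)}\|_{p}\lesssim\|f\|_{K_{p}^{\alpha,q}}$ by the equivalence noted above. Alternatively, the kernel bounds of Lemma~\ref{Key-est1} and Remark~\ref{Key-est3} give the same conclusion via $\eta_\rho\lesssim\eta_{R,N}$ with $R=1/\rho$. The family is also equicontinuous, since $|f_\rho(x+h)-f_\rho(x)|\le\|\tau_h\eta_\rho-\eta_\rho\|_{p'}\|\tilde f\chi_{B_{L}}\|_p$. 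Arzelà--Ascoli then yields a finite $\varepsilon$-net in $C(\overline G)$ for the sup norm. Because $G$ is bounded, sup-norm closeness on $\overline G$ implies $K_{p}^{\alpha,q}(G)$-closeness, with constant $|G|^{1/p}2^{L\alpha}$.

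To combine, write $\|f-g\|\le\|f\|_{K(\Omega\setminus\overline G)}+\|g\|_{K(\Omega\setminus\overline G)}+\|f-f_\rho\|_{K(G)}+\|f_\rho-g_\rho\|_{K(G)}+\|g_\rho-g\|_{K(G)}$. This shows that $A$ is totally bounded, hence precompact in the Banach space $K_{p}^{\alpha,q}(\Omega)$. The main obstacle I expect is the mollification step: one must ensure that the translation estimate \eqref{prop3}, which is stated only on $\Omega$, really controls $\|f_\rho-f\|$ on $G$. It does, since $G+B(0,\rho)\subset\Omega$, and the point evaluations only use values inside $\Omega$. Keeping track of this, together with the constants depending on $L$ in the bounded-set norm equivalence, is where I would be most careful.
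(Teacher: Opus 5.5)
Your proof is correct. It has the same architecture as the paper's: both adapt Adams--Fournier, Theorem~2.32, using density of $C_c(\Omega)$ plus a uniform translation bound for (i)$\Rightarrow$(ii), and a tail cut-off, uniform mollifier approximation and Arzel\`a--Ascoli on $\overline G$ for (ii)$\Rightarrow$(i). Where you differ is in how the technical estimates are obtained.

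\textbf{Uniform boundedness and equicontinuity.} The paper gets uniform boundedness and equicontinuity of the mollified family from the global kernel bounds of Lemma~\ref{Key-est1} and Remark~\ref{Key-est3}. Its equicontinuity comes from the hypothesis \eqref{prop3} through $|J_\varrho*(\tau_hf-f)(x)|\lesssim\|\tau_hf-f\|_{K_p^{\alpha,q}}$. You instead observe that on a bounded set the nonhomogeneous Herz norm is equivalent to the $L^p$ norm. This holds for any real $\alpha$, since only finitely many dyadic pieces occur. H\"older with $\|\eta_\rho\|_{p'}$ and the $L^{p'}$-continuity of translations of the mollifier then suffice. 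Since Arzel\`a--Ascoli is only applied on $\overline G$, this local, more elementary argument is all that is needed, and the convolution lemma becomes unnecessary for this theorem.

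\textbf{Points you make explicit.} You state two things the paper leaves implicit. First, you give the uniform convergence $f_\rho\to\tilde f$ via Minkowski's integral inequality in the $\ell^q(L^p)$ norm. Second, \eqref{prop3} only controls the norm on $\Omega$, and you handle this by measuring the mollification error only on a subset of $\Omega$. The paper instead cites the reduction to $\Omega=\mathbb{R}^n$, which needs both conditions of (ii) together to work.

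\textbf{Translation bound.} In (i)$\Rightarrow$(ii), you apply the translation bound on $\mathbb{R}^n$ to zero extensions. This is the clean way to use Lemma~\ref{translation1}, because it sidesteps the assertion $\Omega=\Omega+h$ made in that lemma's proof.

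\textbf{One small repair.} In your final five-term decomposition, use $\overline G$ rather than $G$, as the paper does. The sets $\Omega\setminus\overline G$ and $G$ together miss $\partial G$, which may have positive measure. All your estimates on $G$, including the Minkowski step and the sup-norm comparison, go through verbatim on $\overline G\subset\Omega$.
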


\begin{proof}
	We proceed in two steps.
	
	\textit{Step 1.} Assume that $A$ is precompact in $K_{p}^{\alpha,q}(\Omega) $%
	. Then, for every $\varepsilon>0$, there exists a finite set $%
	N_{\varepsilon}\subset K_{p}^{\alpha,q}(\Omega)$ such that 
	\begin{equation*}
	A \subset \bigcup_{f\in N_{\varepsilon}} B\Bigl(f,\frac{\varepsilon}{6}\Bigr)%
	,
	\end{equation*}
	where 
	\begin{equation*}
	B\Bigl(f,\frac{\varepsilon}{6}\Bigr) = \Bigl\{ g\in
	K_{p}^{\alpha,q}(\Omega): \|g-f\|_{K_{p}^{\alpha,q}(\Omega)} < \frac{\varepsilon}{6} \Bigr\}.
	\end{equation*}
	
	Since $C_{c}(\Omega)$ is dense in $K_{p}^{\alpha,q}(\Omega)$ (Theorem~\ref{dense}),
	there exists a finite set $S\subset C_{c}(\Omega)$ such that for every $f\in
	A$ there exists $\varphi\in S$ satisfying 
	\begin{equation*}
	\|f-\varphi\|_{K_{p}^{\alpha,q}(\Omega)} < \frac{\varepsilon}{3}%
	.
	\end{equation*}
	
	Let $G$ denote the union of the supports of the finitely many functions in $%
	S $. Then \eqref{pro2} follows immediately.
	
	Let $\overline{B(0,r)}$ be a closed ball containing $G$. Since each $%
	\varphi\in S$ has compact support in $G$, we obtain 
	\begin{align}
	\|\tau_h\varphi-\varphi\|_{K_{p}^{\alpha,q}(\mathbb{R}^{n})}^{q} &=
	\sum_{k=0}^{\infty} 2^{k\alpha q} \|(\tau_h\varphi-\varphi)\chi_{
		R_k}\|_{L^{p}(\mathbb{R}^{n})}^{q}  \notag \\
	&= \sum_{\substack{ k\in\mathbb{N}  \\ 2^{k}\leq 2r+2}} 2^{k\alpha q}
	\|(\tau_h\varphi-\varphi) \chi_{ R_k\cap B(0,r+1)}\|_{L^{p}(\mathbb{R%
		}^{n})}^{q}.  \label{serie}
	\end{align}
	
	Moreover, 
	\begin{equation*}
	|(\tau_h\varphi-\varphi)\chi_{\widehat R_k\cap B(0,r+1)}| \leq
	2\|\varphi\|_{\infty} \chi_{ R_k\cap B(0,r+1)},
	\end{equation*}
	and therefore 
	\begin{equation*}
	\|(\tau_h\varphi-\varphi) \chi_{ R_k\cap B(0,r+1)}\|_{L^{p}(\mathbb{R%
		}^{n})} \lesssim 2^{k\frac np}\|\varphi\|_{\infty}.
	\end{equation*}
	
	Since only finitely many indices $k$ appear in \eqref{serie}, the dominated
	convergence theorem implies that 
	\begin{equation*}
	\lim_{|h|\to0} \|(\tau_h\varphi-\varphi) \chi_{\widehat R_k\cap
		B(0,r+1)}\|_{L^{p}(\mathbb{R}^{n})} = 0.
	\end{equation*}
	Consequently, 
	\begin{equation*}
	\lim_{|h|\to0} \|\tau_h\varphi-\varphi\|_{K_{p}^{\alpha,q}(\mathbb{R}^{n})}
	= 0.
	\end{equation*}
	
	Hence, for sufficiently small $|h|$, 
	\begin{equation*}
	\|\tau_h\varphi-\varphi\|_{K_{p}^{\alpha,q}(\mathbb{R}^{n})} < \frac{%
		\varepsilon}{3}.
	\end{equation*}
	
	Furthermore, by Lemma~\ref{translation1}, 
	\begin{equation*}
	\|\tau_h\tilde f-\tau_h\varphi\|_{K_{p}^{\alpha,q}(\Omega)} \lesssim
	\|\tilde f-\varphi\|_{K_{p}^{\alpha,q}(\Omega)} < \frac{\varepsilon}{3}.
	\end{equation*}
	
	Therefore, 
	\begin{align*}
	\|\tau_h\tilde f-\tilde f\|_{K_{p}^{\alpha,q}(\Omega)} &\leq \|\tau_h\tilde
	f-\tau_h\varphi\|_{K_{p}^{\alpha,q}(\Omega)} 
	+ \|\tau_h\varphi-\varphi\|_{K_{p}^{\alpha,q}(\Omega)} +
	\|f-\varphi\|_{K_{p}^{\alpha,q}(\Omega)} \\
	&< \varepsilon .
	\end{align*}
	
	This proves \eqref{prop3}.
	
	\textit{Step 2.} As in \cite[Theorem~2.32]%
	{AdamsFournier03}, it suffices to prove the converse in the special case $\Omega=\mathbb{R}^{n}.$
	
	Let $\varepsilon>0$ be given. By assumption, there exists a bounded set $%
	G\Subset\mathbb{R}^{n}$ such that 
	\begin{equation}
	\|f\|_{K_{p}^{\alpha,q}(\mathbb{R}^{n}\setminus\overline G)} < \frac{%
		\varepsilon}{3}  \label{tail-est}
	\end{equation}
	for every $f\in A$.
	
	Let $J_{\varepsilon}$ be a standard mollifier; see Section 3. Arguing as in \cite[%
	Theorem~2.32]{AdamsFournier03}, we obtain 
	\begin{equation*}
	\lim_{\varrho\to0} \|J_{\varrho}*f-f\|_{K_{p}^{\alpha,q}(\mathbb{R}^{n})} = 0
	\end{equation*}
	uniformly for $f\in A$. Hence, for sufficiently small $\varrho>0$, 
	\begin{equation}
	\|J_{\varrho}*f-f\|_{K_{p}^{\alpha,q}(\mathbb{R}^{n})} < \frac{\varepsilon}{3%
	}  \label{moll-est}
	\end{equation}
	for all $f\in A$.
	
	We now show that 
	\begin{equation*}
	\mathcal{F}_{\varrho} = \{J_{\varrho}*f:\ f\in A\}
	\end{equation*}
	is precompact in $C(\overline G)$. By the Arzela-Ascoli theorem (see \cite[%
	Theorem~1.33]{AdamsFournier03}), it suffices to prove that $\mathcal{F}%
	_{\varrho}$ is uniformly bounded and equicontinuous on $\overline G$.
	
	First, 
	\begin{equation*}
	|J_{\varrho}*f(x)| \lesssim \|f\|_{K_{p}^{\alpha,q}(\mathbb{R}^{n})},
	\end{equation*}
	where the implicit constant is independent of $f\in A$ and $x\in\mathbb{R}%
	^{n}$; see, Lemma 	\ref{Key-est1} and  Remark \ref{Key-est3}. Since $A$ is bounded in $K_{p}^{\alpha,q}(\mathbb{R}^{n})$, the
	family $\mathcal{F}_{\varrho}$ is uniformly bounded.
	
	Moreover, 
	\begin{equation*}
	|J_{\varrho}*f(x+h)-J_{\varrho}*f(x)|=|J_{\varrho}*(\tau_hf-f)(x)|\lesssim \|\tau_h
	f-f\|_{K_{p}^{\alpha,q}(\mathbb{R}^{n})},
	\end{equation*}
	where the implicit constant is independent of $f$, $x$, and $h$; see, Lemma \ref{Key-est1} and Remark	\ref{Key-est3}. By assumption, 
	\begin{equation*}
	\lim_{|h|\to0} \|\tau_h f-f\|_{K_{p}^{\alpha,q}(\mathbb{R}^{n})} = 0
	\end{equation*}
	uniformly for $f\in A$. Therefore, 
	\begin{equation*}
	\lim_{|h|\to0} |J_{\varrho}*f(x+h)-J_{\varrho}*f(x)| = 0
	\end{equation*}
	uniformly with respect to $x\in\mathbb{R}^{n}$ and $f\in A$. Thus $\mathcal{F%
	}_{\varrho}$ is equicontinuous on $\overline G$.
	
	Hence $\mathcal{F}_{\varrho}$ is precompact in $C(\overline G)$.
	Consequently, there exist finitely many functions 
	\begin{equation*}
	\omega_1,\dots,\omega_m\in C(\overline G)
	\end{equation*}
	such that for every $f\in A$ there exists $j\in\{1,\dots,m\}$ satisfying 
	\begin{equation}
	|J_{\varrho}*f(x)-\omega_j(x)| < \frac{\varepsilon}{3F} \qquad \text{for all 
	}x\in\overline G,  \label{uniform-approx}
	\end{equation}
	where 
	\begin{equation*}
	F^{q} = \sum_{k=0}^{\infty} 2^{k\alpha q} |R_k\cap\overline B_v|^{q/p},
	\end{equation*}
	and $\overline G\subset\overline B_v$ for some $v\in\mathbb{N}$.
	
	We now estimate $\|f-\omega_j\|_{K_{p}^{\alpha,q}(\mathbb{R}^{n})}$. Using %
	\eqref{tail-est}, we obtain 
	\begin{align*}
	\|f-\omega_j\|_{K_{p}^{\alpha,q}(\mathbb{R}^{n})} &\leq \|(f-\omega_j)\chi_{%
		\mathbb{R}^{n}\setminus\overline G}\|_{K_{p}^{\alpha,q}} +
	\|(f-\omega_j)\chi_{\overline G}\|_{K_{p}^{\alpha,q}} \\
	&= \|f\chi_{\mathbb{R}^{n}\setminus\overline G}\|_{K_{p}^{\alpha,q}} +
	\|(f-\omega_j)\chi_{\overline G}\|_{K_{p}^{\alpha,q}} \\
	&< \frac{\varepsilon}{3} + \|(f-\omega_j)\chi_{\overline
		G}\|_{K_{p}^{\alpha,q}}.
	\end{align*}
	
	Furthermore, 
	\begin{align*}
	\|(f-\omega_j)\chi_{\overline G}\|_{K_{p}^{\alpha,q}} &\leq
	\|(f-J_{\varrho}*f)\chi_{\overline G}\|_{K_{p}^{\alpha,q}} 
	+ \|(J_{\varrho}*f-\omega_j)\chi_{\overline G}\|_{K_{p}^{\alpha,q}} \\
	&< \frac{\varepsilon}{3} + \|(J_{\varrho}*f-\omega_j)\chi_{\overline
		G}\|_{K_{p}^{\alpha,q}},
	\end{align*}
	by \eqref{moll-est}.
	
	Finally, using \eqref{uniform-approx}, 
	\begin{align*}
	\|(J_{\varrho}*f-\omega_j)\chi_{\overline G}\|_{K_{p}^{\alpha,q}} &\leq 
	\frac{\varepsilon}{3F} \Bigg( \sum_{k=0}^{\infty} 2^{k\alpha q}
	|R_k\cap\overline B_v|^{q/p} \Bigg)^{1/q} \\
	&= \frac{\varepsilon}{3}.
	\end{align*}
	
	Combining the above estimates yields 
	\begin{equation*}
	\|f-\omega_j\|_{K_{p}^{\alpha,q}(\mathbb{R}^{n})} < \varepsilon.
	\end{equation*}
	Therefore, $A$ is precompact in $K_{p}^{\alpha,q}(\mathbb{R}^{n})$.
\end{proof}
In view of the proof of Theorem \ref{precompact}, together with Lemma
\ref{Key-est2} and Remark \ref{Key-est3}, we obtain the following result.

\begin{theorem}
	\label{precompact1}
	Let $\Omega\subset\mathbb{R}^{n}$ be an open set, $1\leq q<\infty$, and
	$\alpha\geq0$. Define
	\[
	r=
	\begin{cases}
	q, & \text{if }\alpha>0,\\
	1, & \text{if }\alpha=0.
	\end{cases}
	\]
	
	Suppose that $A\subset K_{1}^{\alpha,r}(\Omega)$ is bounded. Then the
	following assertions are equivalent:
	
	\begin{enumerate}
		\item[(i)] $A$ is precompact in $K_{1}^{\alpha,r}(\Omega)$;
		
		\item[(ii)] For every $\varepsilon>0$, there exist $\delta>0$ and a 
		subset $G\Subset\Omega$ such that, for every $f\in A$ and every
		$h\in\mathbb{R}^{n}$ with $|h|<\delta$,
		\begin{equation*}
		\|\tilde f(\cdot+h)-\tilde f\|_{K_{1}^{\alpha,r}(\Omega)}
		<\varepsilon,
		\end{equation*}
		and
		\begin{equation*}
		\|f\|_{K_{1}^{\alpha,r}(\Omega\setminus\overline{G})}
		<\varepsilon.
		\end{equation*}
	\end{enumerate}
\end{theorem}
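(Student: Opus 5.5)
The plan is to repeat the two steps of the proof of Theorem \ref{precompact} with $p=1$. Two inputs have to be replaced: the pointwise convolution bound of Lemma \ref{Key-est1} gives way to Lemma \ref{Key-est2} and Remark \ref{Key-est3}(ii), and $q$ gives way to the index $r$.

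\emph{Step 1: (i) $\Rightarrow$ (ii).} This direction does not use $p>1$, so the argument of Step 1 of Theorem \ref{precompact} carries over. Precompactness gives a finite $\varepsilon/6$-net. By Theorem \ref{dense}(ii), which is valid for $p=1$ and $1\le r<\infty$, this net can be replaced by a finite set $S\subset C_c(\Omega)$ that is an $\varepsilon/3$-net for $A$. We then take $G$ to be a relatively compact open set containing the union of the supports of the elements of $S$. This gives \eqref{pro2} with norm $K_1^{\alpha,r}$.

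For \eqref{prop3}, fix $\varphi\in S$. Only finitely many annuli meet $B(0,\rho+1)$, and on each of them $\|\tau_h\varphi-\varphi\|_{L^1}\to0$ by uniform continuity or dominated convergence. Hence $\|\tau_h\varphi-\varphi\|_{K_1^{\alpha,r}}\to0$. We then apply Lemma \ref{translation1}, which holds for all $1\le p,q\le\infty$, together with the triangle inequality.

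\emph{Step 2: (ii) $\Rightarrow$ (i).} As in Adams–Fournier we reduce to $\Omega=\mathbb{R}^n$ by passing to zero extensions. We fix $\varepsilon$, choose $G$ with tail smaller than $\varepsilon/3$, and show that $\|J_\varrho*f-f\|_{K_1^{\alpha,r}}<\varepsilon/3$ uniformly in $f\in A$. To see this, write
\[
J_\varrho*f-f=\int J(y)\bigl(\tau_{-\varrho y}f-f\bigr)\,dy.
\]
Minkowski's integral inequality is available in $K_1^{\alpha,r}$, since that norm is a mixed $\ell^r(L^1)$ norm with $r\ge1$. Combining it with the uniform translation estimate from (ii) gives the claim.

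\emph{The main obstacle} is the uniform boundedness and equicontinuity of $\{J_\varrho*f:f\in A\}$ on $\overline G$, since Lemma \ref{Key-est1} requires $p>1$ through the Hölder step. Here the idea is to dominate $J_\varrho\le c\,\eta_{R,N}$ with $R=1/\varrho$ and invoke Remark \ref{Key-est3}(ii). That remark gives
\[
|J_\varrho*f(x)|\le c_\varrho\|f\|_{K_1^{\alpha,r}}
\]
exactly for the $r$ of the statement. For $\alpha=0$ this is Lemma \ref{Key-est2} with $n-n/p=0$ and $r=1$. For $\alpha>0$ one uses $K_1^{\alpha,q}\hookrightarrow K_1^{\alpha,\infty}$ together with a version of Lemma \ref{Key-est2} in which the geometric series $\sum 2^{-v\alpha}$ converges. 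This is why $\alpha=0$ forces $r=1$: the $K_1^{0,\infty}$ bound fails because the annular sums diverge.

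Applying the same bound to $\tau_hf-f$ gives equicontinuity, uniformly in $x$, from (ii). Arzelà–Ascoli then provides finitely many $\omega_j\in C(\overline G)$ with $\sup_{\overline G}|J_\varrho*f-\omega_j|<\varepsilon/(3F)$, where
\[
F^r=\sum_{k\ge0}2^{k\alpha r}|R_k\cap\overline B_v|^{r}
\]
is finite because only finitely many terms are nonzero. The three-term triangle inequality then gives $\|f-\omega_j\|_{K_1^{\alpha,r}}<\varepsilon$, which completes the proof.
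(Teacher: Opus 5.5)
Your proposal is correct and follows the route the paper indicates: rerun the proof of Theorem~\ref{precompact} with $p=1$, using Lemma~\ref{Key-est2} and Remark~\ref{Key-est3}(ii) in place of Lemma~\ref{Key-est1}, and taking $F^r=\sum_k 2^{k\alpha r}|\widehat R_k\cap\overline B_v|^{r}$. One aside is inaccurate, though nothing in your proof depends on it: $J_\varrho$ is supported in $B(0,\varrho)$ with $\varrho\le 1$, so each ball $B(x,\varrho)$ meets at most three of the sets $\widehat R_k$, which gives $|J_\varrho*f(x)|\le 3\|J_\varrho\|_\infty\|f\|_{K_1^{0,\infty}}$ and shows that the bound does not fail for $\alpha=0$, so the restriction $r=1$ there comes from routing through the global estimate of Lemma~\ref{Key-est2} rather than from the argument itself.
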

The following lemma plays an important role in simplifying the arguments throughout the remainder of this section.
\begin{lemma}
	\label{compact1} Let $\Omega $ be a domain in $\mathbb{R}^{n}$ and $\Omega
	_{0}\subset \Omega $ a subdomain. Let $\alpha _{1},\alpha _{2}\in \mathbb{R}$%
	, $1\leq r<\infty $, and $1\leq q_{1}\leq q_{0}$. Assume that 
	\begin{equation}
	K_{p,m}^{\alpha _{2},r}(\Omega )\hookrightarrow K_{q_{0}}^{\alpha
		_{1},r}(\Omega _{0})  \label{embcompact}
	\end{equation}%
	continuously, and that 
	\begin{equation*}
	K_{p,m}^{\alpha _{2},r}(\Omega )\hookrightarrow K_{q_{1}}^{\alpha
		_{1},r}(\Omega _{0})\qquad \text{compactly.}
	\end{equation*}
	
	Then, for every $q_{1}\leq q<q_{0}$, the embedding 
	\begin{equation*}
	K_{p,m}^{\alpha _{2},r}(\Omega )\hookrightarrow K_{q}^{\alpha _{1},r}(\Omega
	_{0})
	\end{equation*}%
	is compact.
\end{lemma}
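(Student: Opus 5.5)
The idea is the classical argument (cf. Adams--Fournier, Theorem 6.3, Part II): interpolate between the compact endpoint $q_1$ and the bounded endpoint $q_0$. Let $A$ be bounded in $K_{p,m}^{\alpha_2,r}(\Omega)$, say $\|u\|_{K_{p,m}^{\alpha_2,r}(\Omega)}\le M$ for all $u\in A$. It suffices to show that every sequence $(u_j)\subset A$ has a subsequence that is Cauchy in $K_{q}^{\alpha_1,r}(\Omega_0)$. Since the space is Banach, the limit then exists there.

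\textbf{Step 1.} Compactness into $K_{q_1}^{\alpha_1,r}(\Omega_0)$ gives a subsequence, still denoted $(u_j)$, that is Cauchy in $K_{q_1}^{\alpha_1,r}(\Omega_0)$.

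\textbf{Step 2.} If $q=q_1$ there is nothing to prove. If $q_1<q<q_0$, define $\theta\in(0,1)$ by
\[
\frac1q=\frac{1-\theta}{q_1}+\frac{\theta}{q_0}.
\]
Apply the interpolation inequality \eqref{Interpolation1} of Lemma \ref{interpolation2} with $p_0=q_1$, $p_1=q_0$, $\alpha_0=\alpha_1'=\alpha_1$ and $q_0'=q_1'=r$. The Herz exponents $\alpha$ and $r$ are then unchanged by interpolation, since $(1-\theta)\alpha_1+\theta\alpha_1=\alpha_1$ and $\frac{1-\theta}{r}+\frac{\theta}{r}=\frac1r$. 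Applied to $u_j-u_k$ on $\Omega_0$, this gives
\[
\|u_j-u_k\|_{K_{q}^{\alpha_1,r}(\Omega_0)}\le \|u_j-u_k\|_{K_{q_1}^{\alpha_1,r}(\Omega_0)}^{1-\theta}\,\|u_j-u_k\|_{K_{q_0}^{\alpha_1,r}(\Omega_0)}^{\theta}.
\]
For this we need $u_j-u_k$ to lie in both endpoint spaces. It lies in $K_{q_0}^{\alpha_1,r}(\Omega_0)$ by \eqref{embcompact}, and in $K_{q_1}^{\alpha_1,r}(\Omega_0)$ by the assumed compact (hence continuous) embedding.

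\textbf{Step 3.} By the continuous embedding \eqref{embcompact},
\[
\|u_j-u_k\|_{K_{q_0}^{\alpha_1,r}(\Omega_0)}\le C\,\|u_j-u_k\|_{K_{p,m}^{\alpha_2,r}(\Omega)}\le 2CM .
\]
So the second factor is bounded by $(2CM)^\theta$ uniformly in $j,k$. The first factor tends to $0$ by Step 1, and therefore $(u_j)$ is Cauchy in $K_{q}^{\alpha_1,r}(\Omega_0)$. The same reasoning also shows that the embedding into $K_{q}^{\alpha_1,r}(\Omega_0)$ is at least well defined and bounded.

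There is no real obstacle here. The only point to check is that Lemma \ref{interpolation2} is applied on the open set $\Omega_0$, with the norm defined via the zero extension $\chi_{\Omega_0}$, and with equal Herz exponents $\alpha_1$ and equal summability index $r$ at both endpoints, so that the interpolated space is exactly $K_{q}^{\alpha_1,r}(\Omega_0)$. The case $q_0=\infty$ (if it is allowed) is covered the same way, since the lemma permits infinite exponents.
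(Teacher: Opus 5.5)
Your proof is correct and follows the same route as the paper: you interpolate, via Lemma~\ref{interpolation2} with $\alpha_1$ and $r$ held fixed, between the compact endpoint $q_1$ and the merely bounded endpoint $q_0$. Working with the differences $u_j-u_k$ is in fact cleaner than the paper's version, which applies the continuous embedding to $f_j-f$ even though the limit $f$ is only known to lie in $K_{q_1}^{\alpha_1,r}(\Omega_0)$, not in $K_{p,m}^{\alpha_2,r}(\Omega)$, so it tacitly needs an extra Fatou-type argument to get $f\in K_{q_0}^{\alpha_1,r}(\Omega_0)$, which your Cauchy formulation avoids.
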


\begin{proof}
	Let $q_{1}\leq q<q_{0}$ and choose $0<\theta\leq1$ such that $\frac1q = \frac{1-\theta}{q_{0}} + \frac{\theta}{q_{1}}.$ Let $\{f_{j}\}$ be a bounded sequence in $K_{p,m}^{\alpha _{2},r}(\Omega )$.
	Since the embedding into $K_{q_{1}}^{\alpha _{1},r}(\Omega _{0})$ is
	compact, there exists a subsequence, still denoted by $\{f_{j}\}$, and a
	function $f\in K_{q_{1}}^{\alpha _{1},r}(\Omega _{0})$ such that 
	\begin{equation*}
	f_{j}\rightarrow f\qquad \text{in }K_{q_{1}}^{\alpha _{1},r}(\Omega _{0}).
	\end{equation*}
	
	By Lemma~\ref{interpolation2}, we have 
	\begin{equation*}
	\Vert f_{j}-f\Vert _{K_{q}^{\alpha _{1},r}(\Omega _{0})}\leq \Vert
	f_{j}-f\Vert _{K_{q_{0}}^{\alpha _{1},r}(\Omega _{0})}^{1-\theta }\Vert
	f_{j}-f\Vert _{K_{q_{1}}^{\alpha _{1},r}(\Omega _{0})}^{\theta }.
	\end{equation*}
	
	Using the continuous embedding \eqref{embcompact}, we obtain 
	\begin{equation*}
	\Vert f_{j}-f\Vert _{K_{q}^{\alpha _{1},r}(\Omega _{0})}\lesssim \Vert
	f_{j}-f\Vert _{K_{p,m}^{\alpha _{2},r}(\Omega )}^{1-\theta }\Vert
	f_{j}-f\Vert _{K_{q_{1}}^{\alpha _{1},r}(\Omega _{0})}^{\theta }.
	\end{equation*}
	
	Since $\{f_{j}\}$ is bounded in $K_{p,m}^{\alpha _{2},r}(\Omega )$, the
	first factor on the right-hand side remains bounded, while 
	\begin{equation*}
	\Vert f_{j}-f\Vert _{K_{q_{1}}^{\alpha _{1},r}(\Omega _{0})}\rightarrow 0.
	\end{equation*}%
	Therefore, 
	\begin{equation*}
	\Vert f_{j}-f\Vert _{K_{q}^{\alpha _{1},r}(\Omega _{0})}\rightarrow 0.
	\end{equation*}
	
	Hence every bounded sequence in $K_{p,m}^{\alpha _{2},r}(\Omega )$ admits a
	convergent subsequence in $K_{q}^{\alpha _{1},r}(\Omega _{0})$, which proves
	the compactness of the embedding.
\end{proof}
Now we are in a position to state the second main result of this section.
\begin{theorem}
	\label{compact2} Let $\Omega \subset \mathbb{R}^{n}$ be a domain satisfying
	the cone condition, and let $\Omega _{0}\Subset \Omega $ be a bounded
	subdomain. Let $1<p<\infty$, $1\leq r<\infty, 0\leq%
	\alpha _{1}\leq \alpha _{2}<n-\frac{n}{p}, 1\leq q<q_{0},$
	where 
	\begin{equation}
	\frac{n}{q_{0}} = \frac{n}{p}-m-\alpha _{1}+\alpha _{2} >0. 
	\end{equation}
	Assume furthermore that 
	$m>\alpha _{2}-\alpha _{1}.$
	
	Then the embedding 
	\begin{equation*}
	K_{p,m}^{\alpha _{2},r}(\Omega )\hookrightarrow K_{q}^{\alpha _{1},r}(\Omega
	_{0})
	\end{equation*}%
	is compact.
\end{theorem}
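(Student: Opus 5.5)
The plan is to follow the classical Rellich--Kondrachov scheme of \cite{AdamsFournier03}, using Lemma~\ref{compact1} to reduce to a single exponent. By Theorem~\ref{embeddingsfirst} (nonhomogeneous part) we have the continuous embedding
\[
K_{p,m}^{\alpha_{2},r}(\Omega)\hookrightarrow K_{q_{0}}^{\alpha_{1},r}(\Omega)\hookrightarrow K_{q_{0}}^{\alpha_{1},r}(\Omega_{0}).
\]
The hypotheses of that theorem are checked from $0\le\alpha_1$, $q_0<\infty$ and $m>\alpha_2-\alpha_1$. By Lemma~\ref{compact1} with $q_1=1$, it therefore suffices to prove that
\[
K_{p,m}^{\alpha_{2},r}(\Omega)\hookrightarrow K_{1}^{\alpha_{1},r}(\Omega_{0})
\]
is compact. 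For this last step it is convenient to have $r=1$ when $\alpha_1=0$, matching Theorem~\ref{precompact1}. Since $\Omega_0$ is bounded, only finitely many annuli meet it, so on $\Omega_0$ all the $K^{\alpha_1,s}_{1}$ norms are equivalent to one another and to the $L^1(\Omega_0)$ norm. The choice of the second index is therefore immaterial.

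So let $A$ be the unit ball of $K_{p,m}^{\alpha_2,r}(\Omega)$ and consider the set $A_0$ of restrictions to $\Omega_0$. I will verify condition (ii) of Theorem~\ref{precompact1} for $A_0$, viewed in $K_1^{\alpha_1,r}(\Omega_0)$. The tail condition \eqref{pro2} is handled as in Adams--Fournier. Given $\varepsilon$, take $G\Subset\Omega_0$ with $|\Omega_0\setminus G|$ small. Then Hölder's inequality with exponents $q_0$ and $q_0'$ on the bounded set $\Omega_0$ gives
\[
\|f\|_{K_1^{\alpha_1,r}(\Omega_0\setminus\overline G)}\lesssim \|f\|_{L^1(\Omega_0\setminus \overline G)}\le |\Omega_0\setminus G|^{1-1/q_0}\|f\|_{L^{q_0}(\Omega_0)} .
\]
Here $\|f\|_{L^{q_0}(\Omega_0)}\lesssim\|f\|_{K_{q_0}^{\alpha_1,r}(\Omega_0)}\lesssim 1$, again because $\Omega_0$ is bounded, where Herz and Lebesgue norms are comparable.

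For the translation condition \eqref{prop3}, I split
\[
\|\tilde f(\cdot+h)-\tilde f\|\le \|\tilde f(\cdot+h)-\tilde f\|_{L^1(G_\delta)}+\|\tilde f(\cdot+h)-\tilde f\|_{L^1(\Omega_0\setminus G_\delta)},
\]
where $G_\delta=\{x\in G:\operatorname{dist}(x,\partial G)>\delta\}$ and $G\Subset\Omega_0$. The second term is bounded as in the tail estimate, since both pieces live on sets of small measure. For the first term, by density (Theorem~\ref{density}(ii)) I may assume $f$ is smooth. Then
\[
|f(x+h)-f(x)|\le |h|\int_0^1|\nabla f(x+th)|\,dt ,
\]
and so $\|\tau_hf-f\|_{L^1(G_\delta)}\le|h|\,\|\nabla f\|_{L^1(G)}$. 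Finally $\|\nabla f\|_{L^1(G)}\lesssim \|\nabla f\|_{L^p(\Omega_0)}\lesssim\|f\|_{K^{\alpha_2,r}_{p,1}(\Omega)}$, because $m\ge1$ and $\Omega_0$ is bounded. This term is therefore $\lesssim|h|$ uniformly on $A$. Passing to the limit removes the smoothness assumption.

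The main obstacle I expect is the bookkeeping around the origin and the Herz weights. The weight $2^{k\alpha_1}$ is bounded above on bounded sets only because $\alpha_1\ge0$, and the comparison of $K_{q_0}^{\alpha_1,r}(\Omega_0)$ with $L^{q_0}(\Omega_0)$ uses the lower bound on the weights for the finitely many relevant $k$ (recall $k\ge0$ in the nonhomogeneous norm, with $\widehat R_0=B_0$). I also have to make sure that Theorem~\ref{embeddingsfirst} legitimately applies with the given parameters when $m$ is the only derivative order assumed. If $\alpha_1=0$ but $r\ne1$, I will simply use the equivalence of norms on $\Omega_0$ noted above before invoking Theorem~\ref{precompact1}.
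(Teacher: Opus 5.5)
Your proposal is correct and follows the paper's proof essentially step by step. You reduce via Lemma~\ref{compact1} with $q_1=1$, using the continuous embedding into $K_{q_0}^{\alpha_1,r}(\Omega_0)$ from Theorem~\ref{embeddingsfirst}, and then verify the precompactness criterion with H\"older's inequality on a boundary layer of small measure (for the tail and the translated boundary piece) and with the mean value formula plus density (Theorem~\ref{density}(ii)) in the interior. Working with Lebesgue norms, which are equivalent to Herz norms on the bounded set $\Omega_0$, is a harmless simplification that also fixes two points the paper glosses over: the $p=1$ criterion needed is Theorem~\ref{precompact1}, not Theorem~\ref{precompact}, and that criterion formally requires second index $1$ when $\alpha_1=0$.
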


\begin{proof}
	By Lemma~\ref{compact1}, it is sufficient to prove that the embedding 
	\begin{equation}
	K_{p,m}^{\alpha _{2},r}(\Omega )\hookrightarrow K_{1}^{\alpha _{1},r}(\Omega
	_{0})   \label{embcompact2}
	\end{equation}%
	is compact. First, since $\Omega_{0}$ is bounded, the embedding \eqref{embcompact2} is continuous for any $\alpha_{1},\alpha_{2}\in\mathbb{R}$.
	
	Let $A$ be a bounded subset of $K_{p,m}^{\alpha _{2},r}(\Omega )$. We show
	that the restriction of $A$ to $\Omega _{0}$ is precompact in $K_{1}^{\alpha
		_{1},r}(\Omega _{0})$ by verifying the conditions of Theorem~\ref{precompact}.
	
	For $j\in\mathbb{N}$, define 
	\begin{equation*}
	\Omega_{j} = \Bigl\{ x\in\Omega_{0}: \mathrm{dist}(x,\partial\Omega)\geq 
	\frac{2}{j} \Bigr\}.
	\end{equation*}
	Since $\Omega_{0}\Subset\Omega$, we have 
	\begin{equation*}
	|\Omega_{0}\setminus\Omega_{j}|\to0 \qquad \text{as } j\to\infty.
	\end{equation*}
	
	Let $f\in K_{p,m}^{\alpha _{2},r}(\Omega )$ and define 
	\begin{equation*}
	\tilde{f}(x)=%
	\begin{cases}
	f(x), & x\in \Omega _{0}, \\ 
	0, & x\notin \Omega _{0}.%
	\end{cases}%
	\end{equation*}
	
	Let $k\in \mathbb{N}_{0}$, $\tilde{R}_{k}$$=$ ${B}_{0}$ if $k={0}$ and $\tilde{R}_{k}$$=$${R}_{k}$ if $k\in \mathbb{N}$ . By H\"{o}lder's inequality, 
	\begin{equation*}
	\Vert f\chi _{\tilde{R}_{k}\cap (\Omega _{0}\setminus \Omega _{j})}\Vert
	_{1}\leq |\Omega _{0}\setminus \Omega _{j}|^{1-\frac{1}{q_{0}}}\Vert f\chi _{%
		\tilde{R}_{k}\cap \Omega _{0}}\Vert _{q_{0}}.
	\end{equation*}%
	Therefore, 
	\begin{align*}
	\Vert f\Vert _{K_{1}^{\alpha _{1},r}(\Omega _{0}\setminus \Omega _{j})}&
	\leq |\Omega _{0}\setminus \Omega _{j}|^{1-\frac{1}{q_{0}}}\Vert f\Vert
	_{K_{q_{0}}^{\alpha _{1},r}(\Omega _{0})} \\
	& \lesssim |\Omega _{0}\setminus \Omega _{j}|^{1-\frac{1}{q_{0}}}\Vert
	f\Vert _{K_{p,m}^{\alpha _{2},r}(\Omega )},
	\end{align*}%
	where the last estimate follows from Theorem \ref{embeddingsfirst}.
	
	Since $q_{0}>1$ and $A$ is bounded in $K_{p,m}^{\alpha _{2},r}(\Omega )$, we
	may choose $j$ sufficiently large such that 
	\begin{equation*}
	\Vert f\Vert _{K_{1}^{\alpha _{1},r}(\Omega _{0}\setminus \Omega
		_{j})}<\varepsilon
	\end{equation*}%
	uniformly for all $f\in A$.
	
	Let $|h|$ be sufficiently small. By H\"older's inequality and Lemma~\ref{translation1}, we obtain
	\begin{align*}
	\Vert \tilde{f}(\cdot +h)\Vert _{K_{1}^{\alpha _{1},r}(\Omega _{0}\setminus
		\Omega _{j})}& \leq |\Omega _{0}\setminus \Omega _{j}|^{1-\frac{1}{q_{0}}%
	}\Vert \tilde{f}(\cdot +h)\Vert _{K_{q_{0}}^{\alpha _{1},r}(\Omega _{0})} \\
	& \lesssim |\Omega _{0}\setminus \Omega _{j}|^{1-\frac{1}{q_{0}}}\Vert 
	\tilde{f}\Vert _{K_{q_{0}}^{\alpha _{1},r}(\Omega _{0})} \\
	& \lesssim |\Omega _{0}\setminus \Omega _{j}|^{1-\frac{1}{q_{0}}}\Vert
	f\Vert _{K_{p,m}^{\alpha _{2},r}(\Omega )}.
	\end{align*}%
	Hence, 
	\begin{equation*}
	\Vert \tilde{f}(\cdot +h)-\tilde{f}\Vert _{K_{1}^{\alpha _{1},r}(\Omega
		_{0}\setminus \Omega _{j})}<\varepsilon
	\end{equation*}%
	uniformly for $f\in A$, provided $j$ is sufficiently large.
	
	Now let $f\in C^{\infty }(\Omega )\cap K_{p,m}^{\alpha _{2},r}(\Omega ),$ and assume that $|h|<\frac{1}{3j}.$
	If $x\in \Omega _{j}$ and $0\leq t\leq 1$, then $x+th\in \Omega_{2j} $. By the
	mean value formula, 
	\begin{equation*}
	|f(x+h)-f(x)|\leq |h|\int_{0}^{1}|\nabla f(x+th)|\,dt.
	\end{equation*}
	Using Minkowski's inequality and Lemma~\ref{translation1}, we obtain
	\begin{equation*}
	\Vert f(\cdot +h)-f\Vert _{K_{1}^{\alpha _{1},r}(\Omega _{j})}\lesssim
	|h|\Vert f\Vert _{K_{1,1}^{\alpha _{1},r}(\Omega _{0})}.
	\end{equation*}%
	By the continuous embedding 
	\begin{equation*}
	K_{p,m}^{\alpha _{2},r}(\Omega )\hookrightarrow K_{1,1}^{\alpha
		_{1},r}(\Omega _{0}),
	\end{equation*}%
	it follows that 
	\begin{equation}
	\Vert f(\cdot +h)-f\Vert _{K_{1}^{\alpha _{1},r}(\Omega _{j})}\lesssim
	|h|\Vert f\Vert _{K_{p,m}^{\alpha _{2},r}(\Omega )}.  \label{precompact2}
	\end{equation}
	
	Since $C^{\infty }(\Omega )\cap K_{p,m}^{\alpha _{2},r}(\Omega ) $ is dense
	in $K_{p,m}^{\alpha _{2},r}(\Omega )$, estimate \eqref{precompact2} extends
	to all $f\in K_{p,m}^{\alpha _{2},r}(\Omega )$. Therefore, for sufficiently
	small $|h|$, 
	\begin{equation*}
	\Vert \tilde{f}(\cdot +h)-\tilde{f}\Vert _{K_{1}^{\alpha _{1},r}(\Omega
		_{0})}<\varepsilon
	\end{equation*}%
	uniformly for all $f\in A$.
	
	Consequently, all conditions of Theorem~\ref{precompact} are satisfied, and
	hence $A$ is precompact in $K_{1}^{\alpha _{1},r}(\Omega _{0})$. The proof
	is complete.
\end{proof}

%\bibliography{Herz-data-base}

\begin{thebibliography}{99}
	\bibitem{AdamsFournier03} D.R. Adams, J.F. Fournier, \textit{Sobolev spaces,}
	Volume 140 of Pure and Applied Mathematics (Amsterdam), 2nd edn.
	Elsevier/Academic, Amsterdam, 2003.
	
	\bibitem{BS85} A. Baernstein II, E. T. Sawyer, \textit{Embedding and
		multiplier theorems for }$H^{p}(\mathbb{R}^{n})$, Mem. Amer. Math. Soc. 53,
	no. 318, 1985, iv+82 pp.
	
	\bibitem{Burenkov} V.I. Burenkov, \textit{Sobolev spaces on domains},
	Stuttgart: B. G. Teubner Verlagsgesellschaft mbH, 1998.
	
	\bibitem{Dr-Sobolev} D. Drihem, \textit{Herz-type Sobolev spaces on domains}%
	, Le Matematiche, 77(2) 2022, 229--263.
	
	
	\bibitem{Dr-Herz-Heat} D. Drihem, \textit{Semilinear parabolic equations in
		Herz spaces}, Appl. Anal. (2022), https://doi.
	org/10.1080/00036811.2022.2047948.
		\bibitem{Dr-Herz-Sobolev2} D. Drihem, \textit{Embedding theorems for Herz-type Sobolev spaces on domains}. Submitted.
	
	\bibitem{Fi-We08} H.G. Feichtinger, F. Weisz, \textit{Herz spaces and
		summability of Fourier transforms}, Math. Nachr, 281(3) (2008), 309--324.
	

	\bibitem{LYH} Y. Li, D. Yang, L. Huang, \textit{Real-variable theory of
		Hardy Spaces associated with generalized Herz spaces of Rafeiro and Samko},
	Lecture Notes in Mathematics 2320, Springer, Cham, 2022.
	
	
	
	\bibitem{LuYang1.95} S. Lu, D. Yang, \textit{The local versions of }$H^{p}(%
	\mathbb{R}^{n})$\textit{\ spaces at the origin}, Studia Math. 116 (1995),
	103--131.
	
	\bibitem{LuYang97} S. Lu, D. Yang, \textit{Herz-type Sobolev and Bessel
		potential spaces and their applications}, Sci in China (Ser.A). 40 (1997),
	113--129.
	
	\bibitem{LuYang2.95} S. Lu, D. Yang, \textit{The decomposition of the weighted Herz spaces on }$\mathbb{R}^{n}$\textit{\ and its applications},
	Sci.in China (Ser. A). 38 (1995), 147--158.
	
	\bibitem{Ma85} V. G. Maz'ya, \textit{Sobolev spaces}. Springer, Berlin, 1985.
	
	\bibitem{Herz68} C. Herz, \textit{Lipschitz spaces and Bernstein's theorem
		on absolutely convergent Fourier transforms}, J. Math. Mech. 18 (1968),
	283--324.
	
	\bibitem{Hernandez1998} E. Hern\'{a}ndez, D. Yang, \textit{Interpolation of
		Herz-type Hardy spaces and applications}. Math. Nachr. 42 (1998), 564--581.
	
	\bibitem{RS} H. Rafeiro, S. Samko, \textit{Herz spaces meet Morrey type
		spaces and complementary Morrey type spaces}, J. Fourier Anal. Appl. 26
	(2020), Paper No. 74, 14 pp.
	
	\bibitem{Rag09} M.A. Ragusa, \textit{Homogeneous Herz spaces and regularity
		results}, Nonlinear Anal. 71 (2009), e1909--e1914.
	
	\bibitem{Tartar} L. Tartar, \textit{An introduction to Sobolev spaces and
		interpolation spaces}, Lect. Notes Unione Mat. Ital., vol.
	3,Springer-Verlag, Berlin, Heidelberg, 2007
	
	\bibitem{Tu11} Y. Tsutsui, \textit{The Navier-Stokes equations and weak Herz
		spaces}, Advances in Differential Equations. 16 (2011), 1049--1085.
	
	\bibitem{LYH08} S. Lu, D. Yang, G. Hu, \textit{Herz type spaces and their
		applications}, Beijing: Science Press, 2008.
	
	\bibitem{ZYZ} Y. Zhao, D. Yang, Y. Zhang, \textit{Mixed-norm Herz spaces and
		their applications in related Hardy spaces}, Anal. Appl. 21 (5) (2023),
	1131--1222.
\end{thebibliography}

\end{document}